 \newtheorem{theorem}{Theorem}[section]
\newtheorem{proposition}{Proposition}[section]
\newtheorem{lemma}{Lemma}[section]
 \newtheorem{remark}{Remark}[section]
\begin{document}

\def\eps{\varepsilon}

\title[Hardy-H\'enon parabolic equations]
{Singularity and blow-up estimates via Liouville-type theorems for  Hardy-H\'enon parabolic equations}

\author{Quoc Hung PHAN}

\address{Universit\'e Paris 13, CNRS UMR 7539,
Laboratoire Analyse, G\'eom\'etrie et Applications 93430 Villetaneuse, France}

\email{phanqh@math.univ-paris13.fr}

\keywords{Hardy-H\'enon parabolic equation; Liouville-type theorem; Universal bounds; A priori estimate; Decay estimate; Blow-up }
\subjclass{primary 35B53, 35B45, 35K57; secondary  35B40,  35B33}

\begin{abstract}
We consider the  Hardy-H\'enon parabolic equation $u_t-\Delta u =|x|^a |u|^{p-1}u$ with $p>1$ and $a\in {\mathbb R}$. We establish the space-time singularity and decay estimates, and Liouville-type theorems for radial and nonradial solutions. As applications, we  study universal and a priori bound of global solutions as well as the blow-up estimates for the corresponding initial boundary value problem.
\end{abstract}

\maketitle

\section{Introduction}
In this paper, we study the semilinear parabolic equation of the form 
\begin{align}\label{1}
 u_t-\Delta u =|x|^a |u|^{p-1}u, \quad (x,t)\in {\Omega\times I}
\end{align}
where $\Omega$ is a domain of ${\mathbb R}^N$, $p>1$, and $I$ is an interval of $\mathbb R$. We assume throughout that $a>-2$ when $N\geq 2$, and  $a> -1$ when $N=1$.

Throughout this paper, unless otherwise specified, solutions are considered in the class
\begin{equation}\label{solclass}
\begin{cases}
C^{2,1}(\Omega\times I), &\hbox{if $a\geq 0$},  \\
\noalign{\vskip 1mm}
C^{2,1}(\Omega\setminus\{0\}\times I)\cap C^{0,0}(\Omega\times I), &\hbox{if $a<0$},
\end{cases}
\end{equation}
and are assumed to satisfy the equation pointwise, except at $x=0$ if $a<0$ and $0\in \Omega$. This choice is natural since we are primarily interested in classical solutions,
except for possible singularity at the origin if $a<0$ and $0\in\Omega$. For $N=1$ (and $-1<a<0$ and $0\in\Omega$), we instead consider distributional solutions which belong to $C^{0,0}(\Omega\times I)$.

The restriction $a>-2$ when $N\geq 2$ is reasonable due to the regularity at the origin of stationary solutions  (cf. \cite[Lemma 6.2]{BV99}, \cite{ DP, GS81a}). In this case,  it turns out that any (classical) solution in the sense (\ref{solclass}) is also a distributional solution (see Lemma~\ref{lemappe1} in Appendix).
The case $N=1$ is more peculiar -- see Proposition~\ref{reappe} and the preceding paragraph.

For the statement of  main results, let us introduce the following exponents: 
\begin{align}
p_S(a):=\begin{cases}
\frac{N+2+2a}{N-2}\quad &\text{if}\quad N\geq 3,\\
\infty \quad &\text{if}\quad N=1,2,
\end{cases}
\end{align}
$p_S:=p_S(0)$ and
\begin{align}
 p_B:=\begin{cases}
\frac{N(N+2)}{(N-1)^2}\quad &\text{if}\quad N\geq 2,\\
\infty \quad &\text{if}\quad N=1.
\end{cases}
\end{align}
\subsection{Liouville-type theorems}
As the first topic, we are interested in the Liouville property -- i.e. the nonexistence of solution of problem (\ref{1})
in the entire space ${\mathbb R}^N\times {\mathbb R}$.
We first recall its elliptic counterpart 
\begin{align}\label{2}
 -\Delta u =|x|^a |u|^{p-1}u, \quad x\in {\mathbb R}^N.
\end{align}
The Liouville-type  result for (\ref{2})  plays an important role in the parabolic problem but it is not completely solved. For radial solutions, the problem (\ref{2}) has no positive radial solution if and only if $p< p_S(a)$ and it has been conjectured that the nonexistence of positive solution holds under that condition.  However, the Liouville-type result for  (\ref{2}) was only proved under stronger assumption, namely $p<\min\{p_S, p_S(a)\}$, which is not optimal when $a>0$. Recently, the conjecture was shown in \cite{PhS} for  bounded positive solution in dimension $N=3$.
\medskip

For corresponding parabolic equation, the Liouville property has been studied in special case $a=0$ for  nonnegative and nodal radial solutions (see \cite{BPQ11, BV98, PQ06,  PQS07b}).  The following results are known to be true.

\medskip
\par\noindent {\bf Theorem A.}
{\it  
\smallskip
\par\noindent (i) Let $a=0$ and $1<p<p_S$. Then equation (\ref{1}) has no nontrivial nonnegative radial solution in ${\mathbb R}^N\times {\mathbb R}$.
\smallskip
\par\noindent (ii) Let $a=0$ and $1<p<p_B$. Then equation (\ref{1}) has no nontrivial nonnegative  solution in ${\mathbb R}^N\times {\mathbb R}$.
}

\medskip
\par\noindent {\bf Theorem B.}
{\it  
\smallskip
\par\noindent (i) Let $a=0$, $1<p<p_S$ and let $u=u(r,t)$ be a classical radial solution of (\ref{1}) in ${\mathbb R}^N\times {\mathbb R}$ with the number of sign-changes satisfying
$$z_{(0,\infty)}(u(t))\leq M,\quad \forall t\in {\mathbb R}.$$
Then $u\equiv 0$.
\smallskip
\par\noindent (ii) Let $a=0$, $N=1$ and let $u=u(x,t)$ be a classical solution of (\ref{1}) in ${\mathbb R}\times {\mathbb R}$ with the number of sign-changes satisfying
$$z_{\mathbb R}(u(t))\leq M,\quad \forall t\in {\mathbb R}.$$
Then $u\equiv 0$.
}

\medskip
Theorem~A was shown in \cite{BV98, PQS07b, PQ06}, and Theorem~B is recently proved  in \cite{BPQ11}. The upper  bound of exponent $p$ in Theorem~A(i) and in Theorem~B(i) is optimal due to the existence of positive (bounded) radial solution of $-\Delta u=|u|^{p-1}u$ in ${\mathbb R}^N$ for $p\geq p_S$.

For case $a\ne 0$, the Liouville property  is much less understood even for radial solution. Up to now, the only available result of this kind is the Fujita-type (see \cite{Pin97}, or \cite[section 26]{MP01}), which states there is no positive solution in ${\mathbb R}^N\times \mathbb R_+$ if and only if $1<p\leq 1+\frac{2+a}{N}$. In this paper, we will establish  Liouville-type theorems in case $a\ne 0$ for a larger range of $p$. We have the following results.
\begin{theorem}\label{th1a}
\smallskip
\par\noindent (i) Let $1<p<\min\{p_B,p_S(a)\}$ and $u$ be bounded nonnegative solution of equation (\ref{1}) in ${\mathbb R}^N\times \mathbb R$. Then   $u\equiv 0$.

\smallskip

\par\noindent (ii) Let  $1<p<p_S(a)$ and $u$ be bounded nonnegative radial solution of equation (\ref{1}) in ${\mathbb R}^N\times \mathbb R$. Then   $u\equiv 0$.
\end{theorem}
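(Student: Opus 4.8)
The plan is to reduce both statements to their \emph{elliptic} counterparts, which are available precisely in the ranges dictated by the hypotheses. Since $p_B<p_S$ for $N\geq 2$, the assumption $1<p<\min\{p_B,p_S(a)\}$ in part (i) implies in particular $1<p<\min\{p_S,p_S(a)\}$, which is exactly the range in which (\ref{2}) has no nontrivial nonnegative solution; in part (ii) the hypothesis $1<p<p_S(a)$ is the sharp range for nonexistence of nonnegative \emph{radial} solutions of (\ref{2}). The whole difficulty is therefore to show that a bounded nonnegative entire solution of the parabolic equation (\ref{1}) must in fact be a steady state, after which the elliptic theorems finish the argument. To this end I would work in the self-similar variables adapted to the weight, using the scaling $u_\lambda(x,t)=\lambda^{(2+a)/(p-1)}u(\lambda x,\lambda^2 t)$ under which (\ref{1}) is invariant, and introduce a Giga--Kohn type weighted energy in which the factor $|x|^a$ is absorbed into the measure.

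For part (i) I would establish weighted Bidaut-V\'eron / Gidas--Spruck integral estimates for the parabolic problem: multiplying the equation by suitable powers of $u$ against space-time cut-offs and integrating, one controls the space-time integral of $u_t^2$ (equivalently, the dissipation of the weighted energy). The algebra of these test-function estimates closes precisely in the range $1<p<p_B$ --- this is the parabolic origin of the exponent $p_B$, strictly smaller than the elliptic threshold $p_S$. Boundedness of $u$ together with parabolic regularity provides the a priori bounds on $u$ and $\nabla u$ needed to justify the integrations by parts and to send the cut-offs to infinity; via a rescaling argument this forces any entire bounded solution to be time-independent, so that $u$ solves (\ref{2}) and hence $u\equiv 0$ by the nonradial elliptic Liouville theorem.

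For part (ii), where no restriction beyond the sharp $p_S(a)$ is imposed, I would instead exploit the one-dimensional structure of radial solutions through the zero-number (Sturmian) machinery. Writing $u=u(r,t)$ and comparing with the radial steady states via intersection-comparison, the number of sign changes $z_{(0,\infty)}(u(\cdot,t)-v)$ is finite and nonincreasing in $t$ for any steady state $v$; combined with the monotonicity of the weighted energy along the flow and the sharp radial elliptic Liouville theorem (the only bounded nonnegative radial steady state being $0$), this forces the $\omega$- and $\alpha$-limit sets to consist of steady states and ultimately $u$ to be time-independent, hence $u\equiv 0$.

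The main obstacle throughout is the weight $|x|^a$, which destroys the translation invariance in $x$ underlying the classical parabolic Liouville arguments for $a=0$; this is exactly why the general (nonradial) case is so much harder and forces the smaller exponent $p_B$. Technically, the crux is to set up the weighted energy and the associated monotonicity and Pohozaev-type identities so that they remain valid through the singularity (or degeneracy) of $|x|^a$ at the origin, especially in the delicate regime $-2<a<0$ with $0\in\mathbb R^N$, where the origin is a possible singular point of $u$. Here one must rely on weighted Sobolev and Hardy inequalities and on the regularity theory recorded earlier (the solution class (\ref{solclass}) and Lemma~\ref{lemappe1}) to control the contributions near $x=0$ and to legitimize the integral identities.
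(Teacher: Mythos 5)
Your overall skeleton---show that a bounded entire solution must be stationary, then invoke the elliptic Liouville theorems, which are indeed available in exactly the ranges you cite---coincides with the paper's strategy. But the mechanisms you propose for reaching stationarity have genuine gaps. The most serious one: you never establish spatial decay, and your claim that ``boundedness of $u$ together with parabolic regularity'' justifies the integrations by parts and the passage of cut-offs to infinity is false. A bounded entire solution could a priori have $\int_{{\mathbb R}^N}|\nabla u(t)|^2dx=\int_{{\mathbb R}^N}|x|^au^{p+1}(t)dx=\infty$, in which case the weighted energy is not even defined and the cut-off boundary terms are not under control. The paper first proves the decay estimates $u(x,t)\leq C|x|^{-(2+a)/(p-1)}$, $|\nabla u(x,t)|\leq C|x|^{-1-(2+a)/(p-1)}$ (Theorem~\ref{th2}(ii), Remark~\ref{rm1}(b), estimate (\ref{decay1})), obtained by an annulus rescaling plus doubling argument that reduces matters to the \emph{unweighted} parabolic Liouville theorem (Theorem A(ii)); this is where $p_B$ actually enters, and where the radial/nonradial dichotomy between (i) and (ii) originates. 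Only then can the energy of Lemma~\ref{energy} be defined (with a further nontrivial approximation when $a<0$). By contrast, your route for (i) presupposes weighted Bidaut-V\'eron/Gidas--Spruck estimates whose ``algebra closes precisely for $p<p_B$''; no such estimates are known for the weighted equation---the weight destroys translation invariance and produces sign-indefinite singular terms involving $\nabla(|x|^a)\cdot\nabla u$---and proving them would amount to a direct weighted parabolic Liouville theorem, i.e. essentially the statement at issue. Finally, even granting control of the total dissipation, finiteness of $\int_{\mathbb R}\int_{{\mathbb R}^N}u_t^2\,dx\,dt$ alone does not force $u_t\equiv 0$: one needs the energy to have equal limits as $t\to\pm\infty$. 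The paper obtains this by extracting sequences $t_k\to+\infty$, $s_k\to-\infty$ with $u_t\to0$ in $L^2$, identifying the limits as equilibria, killing those limits with the elliptic Liouville theorem, using the decay estimates to control the tails so that $E(t_k),E(s_k)\to 0$, and only then concluding $u_t\equiv 0$ from the energy inequality. This whole chain is missing from your sketch (``via a rescaling argument this forces time-independence'' is not an argument).

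For part (ii), the intersection-comparison route you outline is precisely the alternative method acknowledged in the paper's remark following Theorem~\ref{th1b}, but it is stated there to work only for $a>0$ (following Pol\'a\v{c}ik--Quittner). For $-2<a<0$ the coefficient $r^a$ is singular at $r=0$: the difference of $u$ with a steady state satisfies a linear parabolic equation whose zeroth-order coefficient $r^a(u^p-v^p)/(u-v)$ blows up at the origin, and the finiteness/monotonicity/dropping properties of the zero number are no longer standard there. So as written your part (ii) covers only a subrange of the claim, whereas the paper's energy argument (identical for (i) and (ii), using the radial decay estimates and the radial elliptic Liouville theorem for $p<p_S(a)$) handles all admissible $a$ at once.
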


For  sign-changing solution, let us recall the definition of zero number. Given an open inteval $I\subset \mathbb R$ and $v\in C(I)$, then the zero number of $v$ in $I$ is defined by
$$z_I(v):=\sup\{j: \exists x_1,..., x_{j+1}\in I,  x_1<x_2<...<x_{j+1}, v(x_i)v(x_{i+1})<0, \text{ for } i=1,...,j \}.$$ 
We have the following result.
\begin{theorem}\label{th1b}
\par\noindent  Let  $1<p<p_S(a)$ and let $u=u(r,t)$ be a  radial solution of (\ref{1}) in ${\mathbb R}^N\times \mathbb R$ with the number of sign-changes satisfying
$$z_{(0,\infty)}(u(t))\leq M,\quad \forall t\in {\mathbb R}.$$
Then $u\equiv 0$.
\end{theorem}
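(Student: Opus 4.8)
The plan is to adapt the self-similar energy method behind the case $a=0$ (Theorem~B(i), \cite{BPQ11}) to the weight $|x|^a$, combining a Giga--Kohn-type monotonicity formula with the Sturm property of the zero number. Since (\ref{1}) is invariant under $u\mapsto\lambda^{\beta}u(\lambda\,\cdot\,,\lambda^2\,\cdot\,)$ with $\beta=\frac{2+a}{p-1}$, fix $T\in\mathbb R$ and pass to the backward similarity variables
\begin{equation*}
y=\frac{x}{\sqrt{T-t}},\qquad s=-\log(T-t),\qquad w(y,s)=(T-t)^{\beta/2}\,u(x,t),
\end{equation*}
which turn (\ref{1}) into the autonomous equation
\begin{equation*}
w_s=\Delta w-\tfrac12\,y\cdot\nabla w-\tfrac{\beta}{2}\,w+|y|^a|w|^{p-1}w .
\end{equation*}
This is the gradient flow in $L^2_\rho$, $\rho(y)=e^{-|y|^2/4}$, of the weighted energy
\begin{equation*}
E[w]=\int_{\mathbb R^N}\Big(\tfrac12|\nabla w|^2+\tfrac{\beta}{4}\,w^2-\tfrac{|y|^a}{p+1}\,|w|^{p+1}\Big)\rho\,dy ,
\end{equation*}
so that $\frac{d}{ds}E[w(s)]=-\int_{\mathbb R^N}w_s^2\,\rho\,dy\le0$. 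Moreover $z_{(0,\infty)}(w(\cdot,s))=z_{(0,\infty)}(u(\cdot,t))$ is nonincreasing in $s$ and bounded by $M$, by the Sturm property of the radial equation.

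First I would derive the a priori bounds that make this scheme legitimate, since $u$ is not assumed bounded. Here the hypotheses enter twice: being radial with at most $M$ sign changes, $u(\cdot,t)$ is sign-definite on an outer interval $(r_*(t),\infty)$, where the singularity and decay estimates of the present paper (consequences of the nonnegative Liouville theorems, Theorem~\ref{th1a}) control the spatial growth; and the corresponding estimate near the origin controls the behaviour at $x=0$ despite the weight. This yields a self-similar bound that translates into locally uniform bounds on $w(\cdot,s)$, so that $E[w(s)]$ is finite and bounded along the trajectory, and the limits $E(\pm\infty):=\lim_{s\to\pm\infty}E[w(s)]$ exist.

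Next, the dissipation $\int_{\mathbb R}\!\int_{\mathbb R^N}w_s^2\,\rho\,dy\,ds<\infty$ together with parabolic compactness forces the $\omega$- and $\alpha$-limit sets of $\{w(\cdot,s)\}$ to consist of equilibria $\phi$, i.e. radial solutions of $\Delta\phi-\tfrac12 y\cdot\nabla\phi-\tfrac{\beta}{2}\phi+|y|^a|\phi|^{p-1}\phi=0$, each inheriting a finite zero number. Because the solution is entire, there is no finite-time blow-up at $T$ and hence $w(\cdot,s)\to0$ as $s\to+\infty$, giving $E(+\infty)=E[0]=0$. The decisive point is that the $\alpha$-limit as $s\to-\infty$ is also $0$. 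This rests on the classification asserting that, for $1<p<p_S(a)$, the profile equation has no nontrivial bounded radial solution with finitely many sign changes, apart from the constants $\pm\kappa$ ($\kappa^{p-1}=\tfrac{1}{p-1}$) that occur only when $a=0$; the nontrivial decaying profiles first appear at the threshold $p=p_S(a)$. When $a\neq0$ this yields $\alpha$-limit $=0$ at once. When $a=0$, a nonzero constant $\alpha$-limit would make $z_{(0,\infty)}(w(\cdot,s))=0$ for $s$ near $-\infty$, hence, by monotonicity, $u$ single-signed for all time; the nonnegative Liouville theorem (Theorem~\ref{th1a}(ii)) then forces $u\equiv0$, against $\alpha$-limit $\ne0$. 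In all cases $E(-\infty)=0$, and a nonincreasing function with $E(-\infty)=E(+\infty)=0$ is identically zero, so $w_s\equiv0$; thus $w$ is a stationary profile of zero energy and finite zero number, whence $w\equiv0$ and $u\equiv0$.

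The main obstacle is exactly this classification of self-similar profiles in the full subcritical range $p<p_S(a)$ in the presence of the singular (or degenerate) weight $|y|^a$. For $a=0$ it relies on a Pohozaev identity and phase-plane analysis of the profile ODE; for $a\neq0$ the weight is singular at the origin, so the regularity of $\phi$ at $y=0$, the class of admissible multipliers in the weighted Pohozaev identity, and the monotonicity of the associated ODE all require separate care, and it is here that $p_S(a)$ rather than $p_S$ emerges as the sharp exponent. A secondary difficulty, again tied to the weight, is securing the self-similar a priori bound near $x=0$, where the radial singularity estimates must be shown to survive; this is precisely where the standing restriction $a>-2$ (and $a>-1$ for $N=1$) is used.
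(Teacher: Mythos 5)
Your architecture (backward similarity variables, Gaussian-weighted energy, Sturm zero number) is coherent and genuinely different from the paper's route: the paper proves Theorem~\ref{th1b} by the same method as Theorem~\ref{th1a}, i.e.\ the spatial decay estimates of Theorem~\ref{th3}, the \emph{ordinary} energy $E(t)=\frac12\int_{{\mathbb R}^N}|\nabla u|^2\,dx-\frac1{p+1}\int_{{\mathbb R}^N}|x|^a|u|^{p+1}\,dx$ of the eternal solution (Lemma~\ref{energy}), sequences $t_k\to+\infty$, $s_k\to-\infty$ along which $u_t\to0$, and finally the \emph{elliptic} Hardy--H\'enon Liouville theorem for radial solutions (\cite{BVG10,PhS}) applied to the limits and to the resulting stationary solution. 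Your proof, by contrast, has a genuine gap at its decisive step: the classification of bounded radial solutions with finitely many sign changes of the profile equation
\begin{equation*}
\Delta\phi-\tfrac12\,y\cdot\nabla\phi-\tfrac{\beta}{2}\,\phi+|y|^a|\phi|^{p-1}\phi=0,
\qquad \beta=\tfrac{2+a}{p-1},
\end{equation*}
in the full range $1<p<p_S(a)$. Everything funnels through this claim: it is what trivializes the $\alpha$-limit set, and after you conclude $w_s\equiv0$ it is again what forces $w\equiv0$ --- note that $w_s\equiv0$ produces a backward self-similar solution $u=(T-t)^{-\beta/2}\phi\bigl(x/\sqrt{T-t}\bigr)$, \emph{not} a stationary solution of (\ref{1}), so the known elliptic Hardy--H\'enon Liouville theorems cannot be substituted at this point. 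For $a=0$ the needed classification is essentially the Giga--Kohn theorem; for $a\ne0$ no such result exists in the literature, and you do not prove it --- you explicitly defer it as ``the main obstacle''. Since this missing lemma is at least as hard as Theorem~\ref{th1b} itself, the proposal is a reduction of the theorem to an unproven statement rather than a proof. The paper's choice of the unweighted-in-time energy is precisely what makes its limiting objects elliptic solutions, for which the required Liouville theorem is actually available.

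There are also gaps in the a priori bounds needed to run your scheme. Theorem~\ref{th1b} does not assume $u$ bounded, and the entire-in-time version of Theorem~\ref{th3} gives only $|u(x,t)|\le C|x|^{-(2+a)/(p-1)}$, which is singular at $x=0$; in similarity variables this reads $|w(y,s)|\le C|y|^{-\beta}$ and says nothing at $y=0$. For the $\omega$-limit ($t\to T^-$) local boundedness of $u$ near $(0,T)$ suffices, but for the $\alpha$-limit ($s\to-\infty$) the region $|y|\le 1$ corresponds to $|x|\lesssim\sqrt{T-t}\to\infty$, where you have no time-uniform bound on $u$; hence the finiteness and boundedness of $E[w(s)]$, the compactness producing equilibria in the $\alpha$-limit set, and the Sturm argument (whose coefficient $|y|^a|w|^{p-1}$ must moreover be locally bounded --- delicate when $a<0$) are all unjustified as stated. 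Finally, in your $a=0$ sub-case you invoke Theorem~\ref{th1a}(ii), which requires a \emph{bounded} nonnegative solution; lacking boundedness you should quote Theorem~B(i) directly. These points are secondary, but the gap in the first paragraph is fatal on its own.
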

The proofs of Theorem~\ref{th1a} and Theorem~\ref{th1b} follow the idea as in \cite{BPQ11, AS10, QS10}, which consists of three steps :
\begin{enumerate}
\item Showing spatial decay of solutions (see  Theorem~\ref{th2}(ii) and Theorem~\ref{th3}(ii) below). 
 \item Using the Lyapunov functional and decay estimate  of solutions to show that both $\alpha$- and $\omega$-limit sets of any solution are nonempty and consist of equilibria.
\item Combining with the nonexistence of nontrivial equilibria to have the contradiction. 
\end{enumerate}
\begin{remark}{\rm
(a) We note that the condition $p<p_S(a)$ in Theorem~\ref{th1a} (ii) and Theorem~\ref{th1b}  is optimal, due to the existence of bounded positive  radial solution of $-\Delta u=|x|^au^p$ in ${\mathbb R}^N$ for $p\geq p_S(a)$.

(b) Theorem~\ref{th1a}(ii) for $a>0$ can be proved by another, completely different method , namely intersection-comparison argument (see \cite{PQ06}). For this case, the proof is totally similar to that in \cite{PQ06}.

(c) Related to Theorem~\ref{th1a}, it is a natural conjecture that the nonexistence of entire nonnegative nontrivial solution holds for $p<p_S(a)$. However, it seems still difficult, even for special case $a=0$.}
\end{remark}
\medskip
\subsection{Singularity and decay estimates}
As the next topic, we establish the space-time singularity and decay estimates of  solutions of equation (\ref{1}). The following theorem is a parabolic counterpart of \cite[Theorem 1.2]{PhS}. The similar results for $a=0$ has been proved in \cite[Theorem 3.1]{PQS07b}.
\begin{theorem}\label{th2}
 (i) Let $u$ be a nonnegative solution of (\ref{1}) on $\Omega\times (0,T)$ where $\Omega=\{0<|x|<\rho\}$. Assume  that either
\begin{align}\label{condp}
 p<p_B, \qquad \text{ or }  \text{ $u$ is radial}.
\end{align}
Then for all $0<|x|<\rho/2$ and $t\in (0,T)$, there holds
\begin{align}\label{singul1}
|x|^{a/(p-1)}u(x,t)+\big||x|^{a/(p-1)}\nabla u(x,t)\big|^{2/(p+1)}\leq C\left (t^{-1/(p-1)}+ (T-t)^{-1/(p-1)}+ |x|^{-2/(p-1)}\right), 
\end{align}
where $C=C(N,p,a)$. 

(ii) Let $u$ be a nonnegative solution of (\ref{1}) in $\Omega\times (0,T)$ where $\Omega=\{|x|>\rho\}$. Assume  that either
\begin{align*}
 p<p_B,  \qquad\text{ or }  \text{ $u$ is radial}.
\end{align*}
Then for all $|x|>2\rho$ and $t\in (0,T)$, there holds
\begin{align}\label{singul2}
|x|^{a/(p-1)}u(x,t)+\big||x|^{a/(p-1)}\nabla u(x,t)\big|^{2/(p+1)}\leq C\left (t^{-1/(p-1)}+ (T-t)^{-1/(p-1)}+ |x|^{-2/(p-1)}\right), 
\end{align}
where $C=C(N,p,a)$.
\end{theorem}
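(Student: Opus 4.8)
The plan is to establish both estimates at once by the rescaling (blow-up) method, reducing the universal bound to the Liouville theorem for the \emph{unweighted} equation (Theorem~A(ii)), exactly as in the $a=0$ case \cite{PQS07b} and the elliptic analogue \cite{PhS}. Writing $z=(x,t)$ and equipping space-time with the parabolic metric $d_P(z,z')=|x-x'|+|t-t'|^{1/2}$, I first note that $t^{-1/(p-1)}+(T-t)^{-1/(p-1)}+|x|^{-2/(p-1)}$ is comparable to $\rho_P(x,t)^{-2/(p-1)}$, where $\rho_P(x,t):=\min\{\sqrt t,\sqrt{T-t},|x|\}$, so both \eqref{singul1} and \eqref{singul2} are equivalent to the single bound
\begin{equation*}
\mathcal M(x,t):=|x|^{a/(p-1)}u+\bigl(|x|^{a/(p-1)}|\nabla u|\bigr)^{2/(p+1)}\le C\,\rho_P(x,t)^{-2/(p-1)}
\end{equation*}
on the stated range of $x$. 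Here $\rho_P$ agrees, up to a fixed factor, with the parabolic distance to the parabolic boundary $\Gamma$ of the cylinder (for (i), $\Gamma$ contains $\{x=0\}$; for (ii), the inner sphere $\{|x|=\rho\}$, and $\rho_P\approx\min\{\sqrt t,\sqrt{T-t},|x|\}$ in each case). The combination $M:=\mathcal M^{(p-1)/2}$ is the right quantity because, under the scaling $u\mapsto\lambda^{(2+a)/(p-1)}u(\lambda\,\cdot,\lambda^2\,\cdot)$ preserving \eqref{1}, $\mathcal M$ carries dimension $-2/(p-1)$, so $M$ has dimension $-1$, matching $\rho_P^{-1}$.

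Arguing by contradiction, I suppose the bound fails uniformly, producing solutions $u_k$ and points $z_k$ with $M_k(z_k)\,\rho_P(z_k)\to\infty$. Applying the parabolic doubling lemma to the (continuous) function $M_k$ yields points $(\tilde y_k,\tilde s_k)$ with $M_k(\tilde y_k,\tilde s_k)\,\rho_P(\tilde y_k,\tilde s_k)\to\infty$ and $M_k\le 2M_k(\tilde y_k,\tilde s_k)$ on the parabolic ball of radius $K_k/M_k(\tilde y_k,\tilde s_k)$ with $K_k\to\infty$. Put $\lambda_k:=1/M_k(\tilde y_k,\tilde s_k)\to0$; since $\rho_P\le|\tilde y_k|$ one has $|\tilde y_k|/\lambda_k\ge\rho_P/\lambda_k\to\infty$, so the origin escapes to infinity in the rescaled variables. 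I then rescale with the weight absorbed,
\begin{equation*}
v_k(y,s):=\lambda_k^{2/(p-1)}|\tilde y_k|^{a/(p-1)}\,u_k\bigl(\tilde y_k+\lambda_k y,\ \tilde s_k+\lambda_k^2 s\bigr),
\end{equation*}
and a direct computation shows that $v_k$ solves $\partial_s v_k-\Delta v_k=\omega_k(y)\,v_k^{p}$ with $\omega_k(y)=\bigl|\hat y_k+(\lambda_k/|\tilde y_k|)\,y\bigr|^{a}\to1$ locally uniformly ($\hat y_k:=\tilde y_k/|\tilde y_k|$), while the doubling normalization collapses to the clean identity $v_k(0,0)+|\nabla v_k(0,0)|^{2/(p+1)}=1$ and $v_k+|\nabla v_k|^{2/(p+1)}\le C$ on $|y|\le K_k,\ |s|\le K_k^2$. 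Interior parabolic estimates then give, along a subsequence, $v_k\to V$ in $C^{2,1}_{loc}(\mathbb R^N\times\mathbb R)$, where $V\ge0$ is bounded, solves the \emph{unweighted} equation $V_t-\Delta V=V^{p}$ in all of $\mathbb R^N\times\mathbb R$, and satisfies $V(0,0)+|\nabla V(0,0)|^{2/(p+1)}=1$, so $V\not\equiv0$.

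It remains to contradict $V\not\equiv0$. If $p<p_B$, Theorem~A(ii) immediately forces $V\equiv0$. If instead $u$ is radial (with $p$ arbitrary), radial symmetry together with $\lambda_k/|\tilde y_k|\to0$ makes the profiles \emph{flatten}: rotating so that $\hat y_k=e_1$, one has $|\tilde y_k+\lambda_k y|=|\tilde y_k|+\lambda_k y_1+o(\lambda_k)$, so the limit $V$ is independent of the transverse variables $y_2,\dots,y_N$. Hence $V(y,s)=W(y_1,s)$ with $W\ge0$ a bounded entire solution of $W_t-W_{y_1y_1}=W^{p}$ in $\mathbb R\times\mathbb R$; since $p_B=\infty$ for $N=1$, Theorem~A(ii) in one space dimension yields $W\equiv0$, again contradicting $W(0,0)+|W_{y_1}(0,0)|^{2/(p+1)}=1$. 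This proves the bound and hence \eqref{singul1}--\eqref{singul2}, part (ii) being identical up to the choice of $\Gamma$.

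The conceptual heart of the argument, and the point needing the most care, is the interplay between the weight and the rescaling. Because the relevant distance $\rho_P$ never exceeds $|x|$, the base points necessarily satisfy $|\tilde y_k|/\lambda_k\to\infty$, so one \emph{always} limits onto the unweighted problem; the singular weight is rendered asymptotically constant and is absorbed through the factor $|\tilde y_k|^{a/(p-1)}$, which is exactly what produces the clean normalization $V(0,0)+|\nabla V(0,0)|^{2/(p+1)}=1$. This also explains the dichotomy in the hypotheses: a nonradial limit is a genuine $N$-dimensional solution, excluded only for $p<p_B$, whereas a radial solution flattens to one space dimension, where the unweighted Liouville theorem holds for every $p>1$. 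The remaining points are routine by comparison: checking that the doubling balls stay inside the domain and exhaust $\mathbb R^N\times\mathbb R$ (which follows from $K_k/M_k(\tilde y_k,\tilde s_k)=o(\rho_P)$ together with $K_k\to\infty$), securing the uniform gradient bound needed to pass to the limit, and justifying the reduction of the three-term right-hand side to $\rho_P^{-2/(p-1)}$.
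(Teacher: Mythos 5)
Your argument is correct in substance and rests on the same two pillars as the paper's proof --- the parabolic doubling lemma of \cite{PQS07} and the unweighted Liouville theorem (Theorem~A(ii), applied in dimension $N$ when $p<p_B$, and in dimension $1$ after flattening when $u$ is radial) --- but it is organized differently, and the difference is worth recording. The paper first performs the change of variables $U(y,s)=R^{(2+a)/(p-1)}u(Ry,R^2s)$ with $R=\frac23|x_0|$, which maps the annulus $\{R<|x|<2R\}$ onto the fixed annulus $\mathcal C=\{1<|y|<2\}$ and turns the weight into a coefficient $c(y)=|y|^a$ that is bounded, bounded below and H\"older on $\overline{\mathcal C}$; all the doubling--rescaling work is then done once, in Lemma~\ref{lem3a}, for an arbitrary such coefficient, and Theorem~\ref{th2} follows by undoing the scaling (this is also why the constant is automatically independent of $\rho$ and $T$). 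You instead run the doubling argument globally on the original punctured (resp.\ exterior) cylinder, applied to the scale-invariant weighted quantity $\mathcal M^{(p-1)/2}$ with $\Gamma$ the full parabolic boundary, and you absorb the weight into the rescaling through the factor $|\tilde y_k|^{a/(p-1)}$. The underlying mechanism is identical --- in both proofs the coefficient trivializes in the limit because the base points sit at distance much larger than $\lambda_k$ from the origin (in your setup this is exactly the observation $2k\lambda_k<d_P((\tilde y_k,\tilde s_k),\Gamma)\le|\tilde y_k|$, which also keeps the weight ratio $(|x|/|\tilde y_k|)^{a/(p-1)}$ bounded on the doubling balls) --- but your version dispenses with the auxiliary lemma, while the paper's version buys reusability: Lemma~\ref{lem3a}, stated for a general coefficient, is invoked verbatim for the nodal case (Theorem~\ref{th3}), with only the Liouville input swapped. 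Your flattening argument in the radial case is the full-space counterpart of the paper's radial computation, where the rescaled equation carries the drift $\frac{N-1}{\rho+r_k/\lambda_k}v_\rho$ and $r_k/\lambda_k\to\infty$ kills it; both reduce to Theorem~A(ii) with $N=1$.

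One small repair: your assertion $\lambda_k:=1/M_k(\tilde y_k,\tilde s_k)\to0$ is neither justified nor needed. Since the constant in (\ref{singul1})--(\ref{singul2}) must not depend on $\rho$ or $T$, the contradiction sequence involves varying $\rho_k,T_k$, and by scale invariance of $M\cdot d_P$ nothing forces $M_k(\tilde y_k,\tilde s_k)\to\infty$ without a preliminary normalization. What the doubling lemma does give --- and what your limit argument actually uses --- is $\lambda_k/|\tilde y_k|<1/(2k)\to0$, which is enough to make $\omega_k\to1$ locally uniformly and to bound the weight ratios; so simply delete the claim $\lambda_k\to0$. (The paper sidesteps this point: in Lemma~\ref{lem3a} the ``$1+$'' on the right-hand side of (\ref{est}) forces $M_k(x_k,t_k)>2k$ directly.)
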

 
For sign-changing solution, we have the following. We stress that there is no restriction on the upper bound of exponent $p$.
\begin{theorem}\label{th3}
 (i)  Let $u=u(r,t)$ be a radial solution  of (\ref{1})  on $\Omega\times (0,T)$ where $\Omega=\{0<|x|=r<\rho\}$ with the number of sign-changes satisfying
$$z_{(0,\rho)}(u(t))\leq M,\quad \forall t\in (0,T).$$
Then for all $0<r<\rho/2 $ and $t\in (0,T)$, there holds
\begin{align*}
r^{a/(p-1)}|u(r,t)|+\big|r^{a/(p-1)} u_r(r,t)\big|^{2/(p+1)}\leq C\left (t^{-1/(p-1)}+ (T-t)^{-1/(p-1)}+ r^{-2/(p-1)}\right)
\end{align*}
where $C=C(N,p,a,M)$. 

\smallskip
(ii) Let $u=u(r,t)$ be a radial solution in $\Omega\times (0,T)$ where $\Omega=\{|x|=r>\rho\}$ with the number of sign-changes satisfying
$$z_{(\rho, \infty)}(u(t))\leq M,\quad \forall t\in (0,T).$$
Then for all $r>2\rho$ and $t\in (0,T)$, there holds
\begin{align*}
r^{a/(p-1)}|u(r,t)|+\big|r^{a/(p-1)} u_r(r,t)\big|^{2/(p+1)}\leq C\left (t^{-1/(p-1)}+ (T-t)^{-1/(p-1)}+ r^{-2/(p-1)}\right)
\end{align*}
where $C=C(N,p,a,M)$.
\end{theorem}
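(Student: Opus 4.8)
The plan is to treat parts (i) and (ii) simultaneously by the rescaling (doubling) method of Pol\'a\v{c}ik--Quittner--Souplet, reducing the pointwise bound to a Liouville-type statement. Writing $r=|x|$ and introducing the scale-invariant auxiliary function
\[
M_u(x,t):=\bigl(|x|^{a/(p-1)}|u(x,t)|\bigr)^{(p-1)/2}+\bigl(|x|^{a/(p-1)}|\nabla u(x,t)|\bigr)^{(p-1)/(p+1)},
\]
one checks that under the natural scaling $u\mapsto \lambda^{(2+a)/(p-1)}u(\lambda\,\cdot,\lambda^2\cdot)$ the quantity $M_u$ has the dimension of an inverse length, and that the asserted estimate is equivalent, up to constants, to the universal bound $M_u(x,t)\le C\,\bigl(t^{-1/2}+(T-t)^{-1/2}+r^{-1}\bigr)$ (raising this to the power $2/(p-1)$ and using $\nabla u=u_r\hat x$ recovers the stated inequality). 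I would argue by contradiction: if no such $C$ works, there are solutions $u_k$ and points $(x_k,t_k)$ at which $M_{u_k}$ exceeds $k$ times the right-hand side.

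The heart of the argument is the doubling lemma applied with the parabolic metric $d_P((x,t),(x',t'))=|x-x'|+|t-t'|^{1/2}$, taking as the singular set $\Gamma$ the parabolic boundary \emph{together with} the axis $\{x=0\}$ (legitimate since the equation is singular there when $a<0$, and the estimate degenerates as $r\to0$ anyway). This produces points $(\tilde x_k,\tilde t_k)$ with $M_{u_k}(\tilde x_k,\tilde t_k)=:1/\lambda_k\to\infty$, with $M_{u_k}\le 2/\lambda_k$ on a parabolic ball of radius $\sim k\lambda_k$ about $(\tilde x_k,\tilde t_k)$, and, crucially, with $d_P$ to $\Gamma$ bounded below by $2k\lambda_k$. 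Since $\{x=0\}\subset\Gamma$ one has $|\tilde x_k|\ge d_P(\tilde x_k,\Gamma)>2k\lambda_k$, so $|\tilde x_k|/\lambda_k>2k\to\infty$: the origin is pushed to infinity in rescaled variables. Setting $\mu_k=\lambda_k^{2/(p-1)}|\tilde x_k|^{a/(p-1)}$ and $v_k(y,s)=\mu_k u_k(\tilde x_k+\lambda_k y,\tilde t_k+\lambda_k^2 s)$ gives
\[
(v_k)_s-\Delta v_k=\Bigl|\hat x_k+\tfrac{\lambda_k}{|\tilde x_k|}y\Bigr|^{a}\,|v_k|^{p-1}v_k,
\]
where $\hat x_k=\tilde x_k/|\tilde x_k|$ and the coefficient tends to $1$ locally uniformly precisely because $\lambda_k/|\tilde x_k|\to0$.

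From the local doubling bound, $M_{v_k}\le 2+o(1)$ on expanding balls while $M_{v_k}(0,0)\to1$; hence $v_k,\nabla v_k$ are locally uniformly bounded, and parabolic interior regularity yields $C^{2,1}_{\mathrm{loc}}$ convergence along a subsequence to a bounded entire solution $v$ of $v_s-\Delta v=|v|^{p-1}v$ on $\mathbb R^N\times\mathbb R$ with $M_v(0,0)=1$, so $v\not\equiv0$. Because $u_k$ is radial while $\tilde x_k$ lies far from the origin on the scale $\lambda_k$, the radial level sets flatten: writing $r=|\tilde x_k+\lambda_k y|=|\tilde x_k|+\lambda_k(\hat x_k\!\cdot y)+O(\lambda_k^2/|\tilde x_k|)$, the limit depends only on $y_1=\hat x_k\!\cdot y$, i.e. $v=v(y_1,s)$ solves the one-dimensional equation on $\mathbb R\times\mathbb R$. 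The sign-change bound passes to the limit: on every compact $y_1$-interval the number of sign changes of $v(\cdot,s)$ is controlled by $z_{(0,\rho)}(u_k(t))\le M$ (resp. $z_{(\rho,\infty)}$ in part (ii)), so $z_{\mathbb R}(v(s))\le M$ for all $s$. Theorem~B(ii), which carries \emph{no} restriction on $p$, then forces $v\equiv0$, a contradiction; this is exactly why Theorem~\ref{th3} holds for all $p>1$. Parts (i) and (ii) differ only in which portion of $\Gamma$ is active near $(\tilde x_k,\tilde t_k)$ and are otherwise identical.

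I expect the main obstacle to be the rigorous transfer of the finiteness of the zero number to the limiting one-dimensional problem: one must bound $z$ on expanding intervals uniformly in $k$ and invoke the semicontinuity of the zero number under $C^1$ convergence (in the spirit of Angenent's theory), together with the verification that the flattening of the radial profile is compatible, to leading order, with the scaling that normalizes both $M_{v_k}(0,0)$ and the nontriviality of $v$. The reduction to the \emph{one-dimensional} Liouville theorem, rather than its $N$-dimensional counterpart, is precisely what removes the Sobolev-type restriction on $p$, and choosing the doubling set $\Gamma$ correctly (so that $|\tilde x_k|/\lambda_k\to\infty$ is automatic) is the structural key.
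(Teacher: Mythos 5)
Your proposal is correct, and at its core it runs on the same machinery as the paper's proof: the Pol\'a\v{c}ik--Quittner--Souplet doubling lemma in the parabolic metric, a rescaling whose radial profile flattens into a one-dimensional one because $|\tilde x_k|/\lambda_k\to\infty$, and a contradiction with the nodal Liouville theorem of \cite{BPQ11} (Theorem B(ii)), which is exactly why no upper restriction on $p$ appears. The organization, however, is genuinely different. The paper proves Theorem~\ref{th3} by re-running the proof of Theorem~\ref{th2}: first a fixed change of variables $U(y,s)=R^{(2+a)/(p-1)}u(Ry,R^2s)$ with $R=\frac{2}{3}|x_0|$, which places the point of interest in the annulus $\mathcal C=\{1<|y|<2\}$ and converts the weight into a coefficient $c(y)=|y|^a$ that is bounded, bounded away from zero and $C^1$ on $\overline{\mathcal C}$; then the doubling--rescaling Lemma~\ref{lem3a}, stated for general H\"older coefficients on $\mathcal C\times(0,T)$, whose radial case rescales in $r$ and produces the one-dimensional limit (contradicting \cite{BPQ11} in the nodal setting). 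You instead perform a single global doubling on $\Omega\times(0,T)$ with the scale-invariant weighted quantity $M_u$ and with the axis $\{x=0\}$ adjoined to the singular set $\Gamma$; the weight is then absorbed into the amplitude $\mu_k=\lambda_k^{2/(p-1)}|\tilde x_k|^{a/(p-1)}$, and the two key facts ($|\tilde x_k|/\lambda_k\to\infty$ and limit coefficient $\equiv 1$) come out automatically from the choice of $\Gamma$. What the paper's two-step route buys is a reusable intermediate lemma with clean hypotheses, to which interior parabolic estimates apply verbatim; what your one-step route buys is the elimination of that lemma and of the outer rescaling, at the cost of carrying the weight through the doubling --- one must check, as you implicitly do, that the bound $M_{u_k}\le 2/\lambda_k$ on the parabolic ball of radius $k\lambda_k$ turns into a uniform local bound for $v_k$, using that $|x|/|\tilde x_k|$ stays between fixed positive constants there. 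Two small remarks: your worry about Angenent-type zero-number theory is unnecessary, since with the paper's definition of $z_I$ (strict sign alternation at finitely many points) lower semicontinuity under locally uniform convergence is elementary, exactly in the way you use it; and in part (ii) the axis does not even belong to $\overline\Omega$, so the inner boundary $\{|x|=\rho\}$ plays the role of $\{x=0\}$, as your closing sentence indicates.
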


 The proofs of Theorem~\ref{th2} and  Theorem~\ref{th3} rely on:
\begin{enumerate}
\item a change of variable, that allows to replace the coefficient $|x|^a$ with a smooth function 
which is bounded and bounded away from $0$ in a suitable spatial domain;

\item  a generalization of a doubling-rescaling argument from \cite{PQS07} (see Lemma~\ref{lem3a} below).

\item The corresponding  Liouville-type theorem for equation (1) with $a=0$.
\end{enumerate}
\medskip
 \begin{remark}\label{rm1}{\rm
(a) The estimates in Theorerm~\ref{th3} in case $a=0$ give a similar form as in \cite[Corollary 3.2]{MM04} and \cite[Proposition 2.5 and 2.7]{MM09}  for radial solutions of supercritical nonlinear heat equation. As an improvement, the constants $C$ are here universal, but at expense of further restriction on finite number of sign-changes of solutions. Our argument is based on rescaling and doubling property while that one in \cite{MM04, MM09} is based on energy estimates.

(b) If we replace the interval $(0,T)$ by $\mathbb R$ in Theorem~\ref{th2}(ii) and  in Theorem~\ref{th3}(ii), then we have the  spatial decay estimate 
\begin{align*}|u(x,t)|\leq C|x|^{-(2+a)/(p-1)},  \quad |\nabla u(x,t)|\leq C|x|^{-(p+1+a)/(p-1)},\quad  |x|>0,\; t\in \mathbb R. 
 \end{align*}
This is an important feature that will be used in proof of Theorem~\ref{th1a} and Theorem~\ref{th1b}. }
 \end{remark}

\subsection{A priori bound  of global solutions and blow-up estimates}
As applications of Liouville-type results, let us consider the corresponding initial-boundary value problem:
\begin{align}\label{BVP}
\begin{cases}
 u_t-\Delta u=|x|^a u^p, \quad & x\in \Omega,\; 0<t<T,\\
u=0, &x\in \partial \Omega, \; 0<t<T,\\
u(x,0)=u_0(x), \quad & x\in \Omega.
\end{cases}
\end{align}
where $\Omega$ is a smooth bounded domain in ${\mathbb R}^N$ and contains the origin. We have a priori bound of nonnegative solutions as follows.

\begin{theorem}\label{th5}
Let  $1<p<\min(p_S, p_S(a))$. Assume $u$ is any global solution of problem (\ref{BVP}) with initial data $u_0\geq 0$. Then 
\begin{align}\label{z4}
 \sup_{t\geq 0}\|u(t)\|_{\infty}\leq C( \|u_0\|_{\infty}).
\end{align}
Moreover, if $\Omega=B_R$ and $u_0$ is radial then (\ref{z4}) still holds whenever $1<p< p_S(a)$.
\end{theorem}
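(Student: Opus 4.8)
The plan is to use the rescaling (doubling) method of Lemma~\ref{lem3a} together with the elliptic Liouville-type theorem for (\ref{2}), which is exactly what produces the sharp range $\min\{p_S,p_S(a)\}$ rather than the parabolic exponent $p_B$. Since $u_0\ge 0$, the maximum principle gives $u\ge 0$ for all $t$, so I work with nonnegative solutions, for which the Liouville theorems are available. Arguing by contradiction, suppose (\ref{z4}) fails: there are a constant $A$ and global solutions $u_k$ with $\|u_{0,k}\|_\infty\le A$ but $M_k:=\sup_{\Omega\times[0,\infty)}u_k\to\infty$. Local well-posedness and comparison show that data bounded by $A$ keep the solution bounded by $2A$ on a fixed interval $[0,\tau_0]$ with $\tau_0=\tau_0(A)>0$, so the near-maximal points $(x_k,t_k)$ satisfy $t_k\ge\tau_0$. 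Applying the generalized doubling argument, together with the change of variables used to prove Theorem~\ref{th2} (which turns the weight $|x|^a$ into a coefficient bounded and bounded away from $0$ away from the origin), I would produce near-maximal points carrying rescaled solutions controlled on parabolic cylinders whose size tends to infinity.

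Passing to the limit, the rescalings converge in $C^{2,1}_{loc}$ to a bounded nonnegative nontrivial limit $v$, and the model it solves depends on where concentration occurs, measured against the value scale $\lambda_k=M_k^{-(p-1)/2}$. If $x_k$ stays bounded away from the origin and from $\partial\Omega$, the weight freezes to a constant $c=|x_\infty|^a>0$ and $v$ solves $v_t-\Delta v=c\,v^p$ on $\mathbb{R}^N\times\mathbb{R}$. If $x_k\to 0$, I would instead use the scale-invariant Hardy--H\'enon rescaling $v_k(y,s)=\mu_k^{(2+a)/(p-1)}u_k(\mu_k y,t_k+\mu_k^2 s)$ with $\mu_k=M_k^{-(p-1)/(2+a)}$, whose limit solves the Hardy--H\'enon equation (\ref{1}) itself on $\mathbb{R}^N\times\mathbb{R}$. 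Finally, if $x_k$ approaches $\partial\Omega$ (necessarily away from the origin, which is interior), $v$ solves $v_t-\Delta v=c\,v^p$ on a half-space with homogeneous Dirichlet data.

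The crucial point is that each limit $v$ is time-independent, which is what allows the sharp elliptic theorem to be used. I would obtain this from the energy (Lyapunov) functional of (\ref{1}): its monotonicity along the rescaled flow, together with the spatial decay of Remark~\ref{rm1}(b) (i.e.\ Theorem~\ref{th2} with $T=\infty$), which makes the energy finite and controls the tails, forces $\partial_t v\equiv 0$ on the limit, exactly as in steps (2)--(3) of the proofs of Theorems~\ref{th1a} and~\ref{th1b}. Thus $v$ is a bounded nonnegative nontrivial stationary solution: of $-\Delta v=c\,v^p$ in $\mathbb{R}^N$ (the case $a=0$ of (\ref{2}), impossible for $1<p<p_S$), of $-\Delta v=|y|^a v^p$ in $\mathbb{R}^N$ (impossible for $1<p<\min\{p_S,p_S(a)\}$ in general and for $1<p<p_S(a)$ in the radial class), or of $-\Delta v=c\,v^p$ on a half-space with $v=0$ on the boundary (impossible for $1<p<p_S$). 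Each contradicts $v\not\equiv 0$, proving (\ref{z4}); the dependence of $C$ on $\|u_0\|_\infty$ enters only through $\tau_0(A)$. When $\Omega=B_R$ and $u_0$ is radial the solution is radial, and the only concentration point needing a subcritical threshold is the origin, governed by the radial elliptic theorem, i.e.\ by $p<p_S(a)$; at any $r_\infty>0$ the radial structure rescales to a one-dimensional stationary problem on $\mathbb{R}$ or a half-line, which has no nontrivial bounded nonnegative solution for any $p>1$, so the constraint $p<p_S$ disappears.

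The step I expect to be hardest is the reduction of the parabolic blow-up limits to steady states, namely proving the requisite energy monotonicity and the compactness of the rescalings uniformly in $k$ in the presence of the singular weight $|x|^a$, most delicately when $x_k\to 0$ and the value scale $\lambda_k$ competes with the distance $|x_k|$ to the origin; it is precisely this reduction that upgrades the admissible range from $p_B$ to the sharp elliptic exponent $\min\{p_S,p_S(a)\}$. A secondary difficulty is the boundary model, which requires a Liouville theorem for $-\Delta v=c\,v^p$ on a half-space with Dirichlet data; here I would invoke the moving-plane method to reduce it to the whole-space result valid for $1<p<p_S$.
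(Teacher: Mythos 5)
Your skeleton (contradiction, rescaling at near-maximum points, case analysis on where $x_k$ concentrates, stationarity of the limit, elliptic Liouville theorems) matches the paper's proof, but the step you yourself identify as crucial --- forcing $\partial_t v\equiv 0$ on the blow-up limit --- is carried out by a mechanism that fails exactly in the range that makes the theorem interesting. You propose to run the Lyapunov argument of Theorems~\ref{th1a}--\ref{th1b} on the limit $v$ itself, using the spatial decay of Remark~\ref{rm1}(b). But that decay comes from Theorem~\ref{th2}, which for nonradial solutions requires $p<p_B$; since $p_B<p_S$ for $N\geq 2$ (e.g. $p_B=15/4<5=p_S$ when $N=3$), in the range $p_B\leq p<\min(p_S,p_S(a))$ the limit has no known decay, its energy on ${\mathbb R}^N$ need not be finite, and the argument collapses. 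Indeed, proving stationarity of a general bounded entire nonnegative solution in that range amounts to the open conjecture recorded after Theorem~\ref{th1a}, so this route cannot be completed (a further obstruction: your limit lives only on ${\mathbb R}^N\times(-\infty,0)$, while the Lyapunov argument needs both time directions). The paper extracts stationarity from the original solutions $u_k$ \emph{before} rescaling: a variation-of-constants argument gives a uniform bound $\|u_k(\delta/2)\|_{H^1(\Omega)}\leq C$, Lemma~\ref{lem4} (concavity) forces $E(u_k(t))\geq 0$ for global solutions, hence the total dissipation $\int_{\delta/2}^{\infty}\|\partial_t u_k\|_2^2\,dt\leq E(u_k(\delta/2))\leq C$ uniformly in $k$; after rescaling, $\int|\partial_s v_k|^2\,dy\,ds=\lambda_k^{4/(p-1)-N+2}\int|\partial_t u_k|^2\,dx\,dt\to 0$ precisely because $4/(p-1)-N+2>0$, i.e. $p<p_S$ (and with exponent $(4+2a)/(p-1)-N+2>0$, i.e. $p<p_S(a)$, for the scaling at the origin). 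This is where the hypothesis $p<\min(p_S,p_S(a))$ actually enters; your proposal contains no uniform dissipation bound and nothing playing the role of Lemma~\ref{lem4}, and on the bounded domain $\Omega$ the issue is not ``controlling tails'' (there are none) but uniformity and nonnegativity of the energy.

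A second gap is your case analysis at the origin: you only treat $x_k$ bounded away from $0$ versus the Hardy--H\'enon scaling $\mu_k=M_k^{-(p-1)/(2+a)}$ centered at the origin, but you omit the intermediate regime $x_k\to 0$ with $M_k|x_k|^{(2+a)/(p-1)}\to\infty$. There your origin-centered rescaling normalizes at the point $y_k=\mu_k^{-1}x_k$ with $|y_k|\to\infty$, so the limit may vanish identically and gives no contradiction, while the scaling centered at $x_k$ sees the degenerate coefficient $|x_k+\lambda_k y|^a$ with $x_k\to 0$. The paper resolves this with the intermediate scaling $\lambda_k=M_k^{-m_k(p-1)/(2m_k+a)}$, where $m_k>1$ is chosen so that $M_k|x_k|^{(2m_k+a)/(p-1)}=1$, leading to the limit equation $w_s-\Delta w=|P+ly|^a w^p$ with $|P|=1$, $l\in[0,1]$, which is then handled by the elliptic Liouville theorems after the same dissipation argument. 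Without this case, and without the energy mechanism above, the proof does not close.
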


\begin{remark}{\rm
 We recall that a priori bound of nonnegative solutions of elliptic problem $-\Delta u=|x|^au^p$ has been proved  under the condition $p<\min(p_S, p_S(a))$ (see \cite[Theorem 1.3]{PhS}).   Theorem~\ref{th5}  say that a priori bound (\ref{z4}) for parabolic countepart also holds under this condition. In special case $a=0$, such a priori bound was proved by Giga \cite{Gig86} for nonnegative solutions, and by Quittner \cite{Qui99} for sign-changing solutions.} 
\end{remark}
\medskip

We next give results of universal initial and final time blow-up rates. The similar result for case $a=0$ has been proved in \cite{PQS07b}. The final time blow-up estimate of problem (\ref{BVP}) was  estalished in \cite[Theorem 1.2 and 1.3]{AT05}, under a stronger condition $1<p<1+\min\{2/N, (2+a)/N\}$.
\begin{theorem}\label{th4}
  Let $u$ be a positive solution of (\ref{BVP}). Assume  that either
\begin{align}
 p<\min\{p_B,p_S(a)\},  \qquad \text{ or } p<p_S(a), \text{ $\Omega$ is a ball $B_R$ and $u$ is radial}.
\end{align}

(i) If $T<\infty$ then there holds
\begin{align}\label{universalbound}
u(x,t)\leq C\left (1+ t^{-1/(p-1)}+ (T-t)^{-1/(p-1)} \right),\quad x\in \Omega,\; 0<t<T,
\end{align}
where $C=C(\Omega,p,a)$. 

(ii) If $u$ is global then there holds
\begin{align}
u(x,t)\leq C\left (1+ t^{-1/(p-1)}\right),\quad x\in \Omega,\; t>0, 
\end{align}
where $C=C(\Omega,p,a)$. 
\end{theorem}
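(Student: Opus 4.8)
The plan is to prove the finite-time statement (i) by contradiction, using the parabolic doubling–rescaling machinery of Lemma~\ref{lem3a} with the Liouville-type theorems as the nonexistence input, and to deduce the global statement (ii) by running the same argument with $T=\infty$ (so that no final-time cylinder appears and the $(T-t)^{-1/(p-1)}$ term drops). Assume (\ref{universalbound}) fails: there are solutions $u_k$ of (\ref{BVP}) with blow-up times $T_k$ and points $(x_k,t_k)$ at which the ratio of $u_k(x_k,t_k)$ to $1+t_k^{-1/(p-1)}+(T_k-t_k)^{-1/(p-1)}$ diverges. Working with the Hardy--H\'enon scaling $v(y,s)=\lambda^{(2+a)/(p-1)}u(\lambda y,t_0+\lambda^2 s)$, which leaves (\ref{1}) invariant, and with a weighted size functional $M$ that scales like an inverse length under this rescaling and reduces, after the change of variable used in the proofs of Theorems~\ref{th2} and~\ref{th3}, to the standard functional $u^{(p-1)/2}+|\nabla u|^{(p-1)/(p+1)}$ away from the origin, the doubling lemma selects renormalized base points where $M$ is large and nearly locally maximal on a parabolic cylinder. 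Rescaling there and using interior parabolic estimates yields, along a subsequence, a nontrivial bounded entire solution of a limit equation whose form is dictated by the position of the base points relative to the rescaling scale $\lambda_k$.

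If the concentration occurs at the origin, i.e. $|x_k|\lesssim\lambda_k$, the limit keeps the full weight and solves $v_s-\Delta v=|y|^a v^p$ on ${\mathbb R}^N\times{\mathbb R}$. In the nonradial case this is excluded by Theorem~\ref{th1a}(i), which forces $p<\min\{p_B,p_S(a)\}$; in the radial case it is excluded by Theorem~\ref{th1a}(ii) or Theorem~\ref{th1b}, valid up to $p_S(a)$, which is exactly why the radial hypothesis is the weaker $p<p_S(a)$. At interior base points bounded away from the origin, $|x_k|\gg\lambda_k$, the coefficient $|x|^a$ is comparable to the positive constant $|x_k|^a$ on the whole rescaling cylinder; after absorbing it and applying the change of variable, the limit solves the constant-coefficient equation $v_s-\Delta v=v^p$ on ${\mathbb R}^N\times{\mathbb R}$, ruled out by Theorem~A for $p<p_B$, while in the radial ball case the rescaled operator degenerates to the one-dimensional heat operator and Theorem~B(ii) applies for every $p$. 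The truncation of the cylinders at the initial time $s=0$ and near the blow-up time $s=T_k$ is what produces, respectively, the $t^{-1/(p-1)}$ and $(T-t)^{-1/(p-1)}$ contributions, while concentration in the bulk (base points and times at fixed distance from the parabolic boundary) produces the additive constant $1$.

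The remaining region, and the step I expect to be the main obstacle, is a boundary layer along $\partial\Omega$. Since $\Omega$ contains the origin in its interior, $\partial\Omega$ lies at a positive distance from $0$, so there $|x|^a$ is again comparable to a positive constant and a boundary concentration point rescales to a bounded positive entire solution of $v_s-\Delta v=v^p$ on the half-space ${\mathbb R}^N_+\times{\mathbb R}$ vanishing on $\partial{\mathbb R}^N_+$. This case is not covered by the whole-space theorems proved here, so one must supply the corresponding half-space Dirichlet Liouville theorem: it is available in the subcritical range $p<p_B$ in the nonradial case (via moving-plane monotonicity in the normal direction), and in the radial ball case it reduces, by odd reflection across $r=R$, to a one-dimensional half-line problem to which the finite-sign-change statement Theorem~B(ii) applies for every $p$. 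With the three regions treated by a finite covering of $\overline\Omega\times[0,T]$ and $C=C(\Omega,p,a)$ taken as the largest of the resulting constants, the divergence of the chosen ratios is contradicted, which proves (\ref{universalbound}); the global bound (ii) is the same argument with $T_k=\infty$.
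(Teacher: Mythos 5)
Your overall architecture is the same as the paper's: argue by contradiction, apply the doubling lemma of \cite{PQS07} in the parabolic metric with respect to the time boundary, rescale at the selected points, and split into three regions (near the origin, interior away from the origin, near $\partial\Omega$) each killed by a Liouville theorem. Your treatment of the two easier regions matches the paper: interior concentration gives the constant-coefficient equation and Theorem A(ii) (or its one-dimensional version in the radial case), and boundary concentration gives the half-space Dirichlet problem, which the paper handles simply by citing \cite[Theorem 2.19]{PQS07b}; your moving-plane and odd-reflection justifications are reasonable substitutes for that citation, so the step you flag as ``the main obstacle'' is in fact the least problematic one. Deducing (ii) from (i) is also fine either your way or the paper's (the paper lets $T\to\infty$ in (i), using that $C$ is independent of $T$ and $u$).

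The genuine gap is at the origin, exactly where your write-up is vaguest. Your argument hinges on a ``weighted size functional'' that scales like an inverse length under the Hardy--H\'enon rescaling, i.e.\ essentially $M=\bigl(|x|^a u^{p-1}\bigr)^{1/2}+\dots$, but such a functional cannot be fed into the doubling lemma on a region containing $x=0$: for $a<0$ it is unbounded near the origin (the doubling lemma requires $M$ bounded on compact subsets of $D$; excluding the line $x=0$ from $D$ would only reproduce the singular estimate of Theorem~\ref{th2}(i), not \eqref{universalbound}), while for $a>0$ it vanishes at the origin, so the negation of \eqref{universalbound} --- a statement about $u$ itself --- does not produce points where $M$ is large when the violating points approach $x=0$; moreover the doubling bound $|y|^a w_k^{p-1}\le 4$ on the rescaled cylinder neither gives local boundedness of $w_k$ near $y=0$ nor preserves the normalization $|y_k|^a w_k^{p-1}(y_k,0)=1$ when $y_k\to 0$, so the ``nontrivial bounded entire solution'' you need fails to materialize. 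Your dichotomy $|x_k|\lesssim\lambda_k$ versus $|x_k|\gg\lambda_k$ also silently drops the intermediate regime $x_k\to 0$ with $|x_k|\gg\lambda_k$, where the frozen coefficient $|x_k|^a$ tends to $0$ or $\infty$ and cannot simply be ``absorbed'' without destroying the compatibility between the amplitude normalization and the doubling bound. The paper's proof is built precisely to avoid these traps: it applies the doubling lemma to the unweighted functional $M_k=u_k^{(p-1)/2}$, and when $x_k\to 0$ it distinguishes whether $M_k^{2/(2+a)}|x_k|$ stays bounded --- Hardy--H\'enon rescaling centered at $x_k$, limit equation $w_s-\Delta w=|y+P|^a w^p$, excluded by Theorem~\ref{th1a} --- or tends to infinity, in which case it introduces interpolated exponents $m_k>1$ defined by $M_k^{2/(2m_k+a)}|x_k|=1$ and scales $\lambda_k=M_k^{-2m_k/(2m_k+a)}$, yielding limit equations $w_s-\Delta w=|P+ly|^a w^p$ with $|P|=1$, $l\in[0,1]$, excluded by Theorem~\ref{th1a} when $l\ne 0$ and by Theorem A(ii) when $l=0$. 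This interpolation device is the key idea your proposal is missing; without it, or a rigorous repair of the weighted-functional approach near $x=0$, the argument does not close.
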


Theorem~\ref{th4}(ii) in particular implies universal bounds, away from $t=0$, for all global solutions of problem (\ref{BVP}). In last result, we provide such bounds under different assumptions on $p,a$ and $N$. This result gives a less precise conclusion than that in Theorem~\ref{th4}(ii) but it can be applied in a different range of parameters, due to a completely different method. Whereas Theorem~\ref{th4} relies on Liouville theorems and doubling arguments, the method of proof of Theorem~\ref{th6} is different, based on a combination of energy and rescaling arguments (see \cite{Qui01, QSW04}).

\begin{theorem}\label{th6}
Let $a>0$, $N\leq 4$,  and $1<p<\frac{N+2+a}{N-2+a}$ ($1<p<\infty$ when $N=1$). Then for all $\tau >0$, there exists $C=C(\Omega, p, a, \tau)$ such that any nonnegative global solution of problem (\ref{BVP}) satisfies
\begin{align}\label{universal}
 \sup_{t\geq \tau}\|u(t)\|_\infty\leq C(\Omega, p, a, \tau).
\end{align}
\end{theorem}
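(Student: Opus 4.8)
The plan is to follow the energy-and-rescaling strategy of \cite{Qui01, QSW04}, which yields universal bounds for $t\geq\tau$ without invoking the parabolic Liouville theorems used for Theorem~\ref{th4}, and hence applies in a different range of $(p,a,N)$. The natural Lyapunov functional for (\ref{BVP}) is
\[
E(u(t))=\frac12\int_\Omega|\nabla u|^2\,dx-\frac1{p+1}\int_\Omega|x|^a u^{p+1}\,dx,
\]
which is finite because $a>0$ makes $|x|^a$ bounded on the bounded domain $\Omega$, and which satisfies $\frac{d}{dt}E(u(t))=-\int_\Omega u_t^2\,dx\leq0$. First I would record the standard dichotomy: writing $y(t)=\tfrac12\int_\Omega u^2\,dx$, one has $y'(t)=-2E(u(t))+\tfrac{p-1}{p+1}\int_\Omega|x|^au^{p+1}\,dx$, so if $E(u(t_0))<0$ at some time then (since $E$ is nonincreasing) $y'$ is bounded below by a positive constant and a concavity (Levine-type) argument forces finite-time blow-up. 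Consequently every global nonnegative solution obeys $E(u(t))\geq0$ for all $t$, whence $\tfrac12\|\nabla u(t)\|_2^2\geq\tfrac1{p+1}\int_\Omega|x|^au^{p+1}\,dx$.

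The heart of the proof is a universal a priori estimate in the energy space: for all $t\geq\tau$,
\[
\|u(t)\|_{H^1_0(\Omega)}^2+\int_t^{t+1}\!\!\int_\Omega u_s^2\,dx\,ds\leq C(\Omega,p,a,\tau),
\]
with $C$ independent of the particular global solution. I would prove this by contradiction and rescaling. Given a sequence of global solutions $u_k$ and times $t_k\geq\tau$ along which the left-hand side diverges, set $v_k(x,s)=\lambda_k^{(2+a)/(p-1)}u_k(\lambda_k x,\,t_k+\lambda_k^2 s)$ with $\lambda_k\to0$ chosen to normalize the rescaled solutions (e.g. $E(v_k(\cdot,0))=1$). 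Since the weight $|x|^a$ is scale-covariant only about the origin, the rescaling must be centered there; then $v_k$ solves the same Hardy--H\'enon equation on the expanding domains $\lambda_k^{-1}\Omega\to\mathbb R^N$, and one checks that $E$ rescales by a positive power of $\lambda_k$ exactly when $p<p_S(a)$, which holds here. Using the dissipation bound to force $\int u_s^2\to0$ in the limit, the rescaled sequence should converge to a nontrivial \emph{stationary} entire solution of $-\Delta w=|x|^a w^p$ on $\mathbb R^N$ of finite energy, which a Pohozaev identity excludes for $p<p_S(a)$ --- the desired contradiction.

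To upgrade the universal $H^1_0$-bound to the universal $L^\infty$-bound (\ref{universal}) I would use the hypotheses $N\leq4$ and $1<p<\frac{N+2+a}{N-2+a}$. Treating $u_t(t)$ as an $L^2$ source controlled by the dissipation estimate and reading (\ref{BVP}) as the elliptic identity $-\Delta u(t)=|x|^au^p(t)-u_t(t)$, elliptic regularity places $u(t)$ in $H^2(\Omega)$; because $N\leq4$ the embedding $H^2\hookrightarrow L^q$ holds for arbitrarily large $q$ (and into $C(\overline\Omega)$ when $N\leq3$), providing a large integrability gain. The bound $\int_\Omega|x|^au^{p+1}\,dx\leq C$ together with the weighted Sobolev inequality --- whose critical exponent reflects the Sobolev exponent $\frac{N+2+a}{N-2+a}$ of the effective dimension $N+a$ --- then lets the nonlinear term $|x|^au^p$ be reabsorbed, so that the resulting bootstrap closes under the stated subcriticality and delivers (\ref{universal}) with a constant depending only on $\Omega,p,a,\tau$.

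The main obstacle is the universal energy estimate. The delicate points are the forced centering of the rescaling at the origin, so that the limit retains the weight $|x|^a$ rather than degenerating to the unweighted problem; the extraction of a \emph{nontrivial} limit from only the normalized energy and the dissipation control, which requires ruling out that all the energy escapes to spatial infinity under rescaling; and the verification that the expanding domains and the Dirichlet condition pass correctly to the limit. Showing that the limit is genuinely stationary and of finite energy, so that the Pohozaev identity applies, is the technical core; the conditions $N\leq4$ and $p<\frac{N+2+a}{N-2+a}$ then enter, more routinely, only in the regularity bootstrap of the final step.
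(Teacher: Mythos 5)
Your high-level strategy (energy dissipation plus rescaling so that the limit is \emph{stationary}, and then only elliptic Liouville/Pohozaev-type results are needed) is exactly the philosophy of the paper's proof, which follows \cite{Qui01, QSW04}; but your execution of what you yourself call the heart of the argument has two fatal gaps. (a) Centering the rescaling at the origin with the normalization $E(v_k(\cdot,0))=1$ cannot produce a nontrivial limit: nothing forces concentration at the origin, and if the solutions $u_k$ concentrate at points $x_k$ with $|x_k|\gg\lambda_k$, then $v_k\to 0$ locally and no contradiction is reached; an energy normalization does not prevent this, since $E$ is a difference of two terms and can stay of size one while all mass escapes to spatial infinity in the rescaled frame. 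You flag this (``ruling out that all the energy escapes'') but give no mechanism. The paper instead rescales about the \emph{maximum} points $x_k$ and splits into three cases ($x_0\in\Omega\setminus\{0\}$, $x_0\in\partial\Omega$, $x_0=0$), using in the last case a further anisotropic rescaling (exponents $m_k$ with $M_k^{(p-1)/(2m_k+a)}|x_k|=1$) for the regime $M_k^{(p-1)/(2+a)}|x_k|\to\infty$, $x_k\to 0$; limits centered away from the origin solve the \emph{unweighted} Lane--Emden equation and are killed by \cite{GS81b} --- they are not a degeneracy to be avoided. (b) Stationarity: you write that ``the dissipation bound forces $\int u_s^2\to 0$ in the limit'', but under the contradiction hypothesis there is no a priori bound on the energy; in fact Step 1 of the paper's proof shows the opposite, $E_k(\tau/4)\geq k^{1/2}$ (see \eqref{z11}), so the total dissipation is uncontrolled. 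Making the rescaled time derivative vanish is precisely the technical core of the method, and it requires the time-selection machinery of \cite{QSW04}: the weighted space-time estimate of Lemma~\ref{lem5}, the measure-theoretic argument on the sets $F_k=\{-E_k'\geq E_k^{1+1/\alpha}\}$ and $G_k$, the smoothing/interpolation bound \eqref{z16}, and finally the verification that $\tilde w_k\to 0$ in $L^r_{loc}$ because the exponent $\gamma_1=-p+\frac{N(p-1)}{4}+\frac{\alpha+1}{2\alpha}(p+1-q)$ is negative. Your proposal has no substitute for any of this.

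Relatedly, you misplace the hypotheses: $N\leq 4$ and $p<\frac{N+2+a}{N-2+a}$ do \emph{not} enter only in a final elliptic bootstrap. In the actual proof, $p<\frac{N+2+a}{N-2+a}$ is needed to choose $q\in(q_c,p+1)$ with $aq/(p+1-q)<N$ so that the Weissler smoothing plus weighted H\"older reduction works (reducing \eqref{universal}, via Theorem~\ref{th5}, to bounding $\inf_{t\in(0,\tau)}\||x|^{a/(p+1)}u(t)\|_{p+1}$ --- a reduction your argument also needs in some form to pass from one time to all $t\geq\tau$), and $N\leq 4$ combined with this condition gives $(N-3)p<N-1$, which is exactly what makes $\gamma_1<0$, i.e.\ what makes the limit equation elliptic. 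As sketched, your core step would use only $p<p_S(a)$ and soft arguments, and would therefore ``prove'' universal bounds in ranges where they are not known --- a sign that the gaps above are structural, not merely technical.
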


\medskip
 In this paper, the proofs of Theorem~\ref{th2}-\ref{th6}  all make use of rescaling techniques,   combined with some additional arguments, such as, doubling properties, parabolic Liouville-type theorems (for both case $a=0$ and $a\ne 0$), or energy arguments. The classical rescaling argument was first introduced by Gidas and Spruck (\cite{GS81b}) for elliptic problem, it was then significantly  improved in \cite{Gig86, Hu96, PQS07, PQS07b} for elliptic and parabolic problems. In particular, the authors in \cite{PQS07, PQS07b} have developped the doubling property (which is an extension of an idea of \cite{Hu96}) that enables one to obtain a variety of important results such as: singularity and decay estimates, a priori bound and universal bounds of solutions, etc.... We essentially employ this powerful idea and introduce some new rescalings to deal with some new difficulties arising due to the degeneracy and singularity of the term $|x|^a$. We intend to provide the details of various rescaling arguments in order to make precise the differences among casses.

\medskip
We close the introduction by mentioning other work related to problem (\ref{BVP}). The Cauchy problem corresponding to problem (\ref{BVP}) (i.e. $\Omega={\mathbb R}^n$) has been widely studied, and the existence and nonexistence of global solution were established \cite{Pin97, Hir08, FT00}. The asymptotics, stabilization and blow-up phenomenon of the Cauchy problem are considered in  \cite{Wan93, DLY06, LL08}.  The blow-up phenomenon for initial-boundary value problem (\ref{BVP}) can be found in  \cite{GS11, GLS10}, where the authors constructed  a special solution that blows up at the origin, and also gave some sufficient conditions that ensure the origin is not a blow-up point.

The organization of this paper is as follows. Section 2 contains the proof of the singularity and decay estimates (Theorem~\ref{th2} and \ref{th3}). Section 3 contains the proof of Liouville-type theorems. In Section 4, we give the proofs of Theorems~\ref{th5}-\ref{th6}.

\section{Singularity and decay estimates}
In this Section, we give a relatively simple proof of Theorem~\ref{th2}. Theorem~\ref{th3} can be proved by the same argument. 
We need the following lemma.

\begin{lemma}\label{lem3a}
Let $\mathcal C=\{x\in {\mathbb R}^{N}: 1<|x|<2\}$,  
$\alpha\in (0,1]$ and $c\in C^\alpha(\overline{\mathcal C})$ be a function  satisfying
\begin{equation}\label{boundc}
\|c\|_{C^\alpha(\overline{\mathcal C})}\leq C_1\qquad \text{ and } \qquad c(x)\geq C_2,\quad x\in \overline{\mathcal C},
\end{equation}
for some constants $C_1, C_2>0$.  Let $u$ be positive classical solution of
\begin{equation}\label{eqcup}
u_t-\Delta u=c(x)u^p,\quad (x,t)\in \mathcal C\times {\mathbb R}.
\end{equation}
Assume  that either
\begin{align}
 p<p_B,  \qquad\text{ or }  \text{ $c, u$ are radial}.
\end{align}
Then there exists a constant $C=C(\alpha, C_1, C_2, p, N)$, such that, for all $(x,t)\in \mathcal C\times(0,T)$, there holds
\begin{align}\label{est}
|u(x,t)|+|\nabla u(x,t)|^{2/(p+1)}\leq C\bigl(1+t^{-1/(p-1)}+(T-t)^{-1/(p-1)}+{\rm dist}^{-2/(p-1)}(x,\partial \mathcal C)\bigr).
 \end{align}
\end{lemma}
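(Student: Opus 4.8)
The plan is to argue by contradiction using the doubling--rescaling scheme of \cite{PQS07}, reducing \eqref{est} to the parabolic Liouville theorem (Theorem~A) for the constant-coefficient equation $w_t-\Delta w=w^p$. First I would introduce the scale-invariant quantity
\[
M(x,t):=u^{(p-1)/2}(x,t)+|\nabla u(x,t)|^{(p-1)/(p+1)},
\]
which has the dimension of an inverse length: under $u\mapsto\lambda^{2/(p-1)}u(\lambda\,\cdot\,,\lambda^2\,\cdot\,)$ one has $M\mapsto\lambda M$. Writing $\hat d(x,t):=\min\{\sqrt t,\sqrt{T-t},\mathrm{dist}(x,\partial\mathcal C)\}$ for the parabolic distance to $\Gamma:=(\partial\mathcal C\times[0,T])\cup(\overline{\mathcal C}\times\{0,T\})$, the estimate \eqref{est} is equivalent, after raising to the powers $2/(p-1)$ and $(p-1)/(p+1)$ and using $(\sum a_i)^\theta\le C\sum a_i^\theta$, to the single bound $M(x,t)\le C(1+\hat d^{-1}(x,t))$. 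Suppose this fails: then there are coefficients $c_k$ obeying \eqref{boundc}, solutions $u_k$, times $T_k$ and points $(y_k,s_k)\in\mathcal C\times(0,T_k)$ with $M_k(y_k,s_k)\,\hat d(y_k,s_k)>2k$. Applying the parabolic doubling lemma of \cite{PQS07}, with respect to the metric $d_P((x,t),(x',t'))=|x-x'|+|t-t'|^{1/2}$, produces points $(x_k,t_k)$ with $\lambda_k:=M_k(x_k,t_k)^{-1}\to0$, still satisfying $M_k(x_k,t_k)\hat d(x_k,t_k)>2k$, and with $M_k\le2M_k(x_k,t_k)$ on the parabolic ball of radius $k\lambda_k$ about $(x_k,t_k)$.

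Next I would rescale, setting $v_k(y,s):=\lambda_k^{2/(p-1)}u_k(x_k+\lambda_k y,\,t_k+\lambda_k^2 s)$, so that $v_k$ solves $\partial_s v_k-\Delta v_k=\tilde c_k v_k^p$ with $\tilde c_k(y)=c_k(x_k+\lambda_k y)$ on the cylinder $\{|y|+|s|^{1/2}\le k\}$, which exhausts $\mathbb R^N\times\mathbb R$. By construction $\tilde M_k:=v_k^{(p-1)/2}+|\nabla v_k|^{(p-1)/(p+1)}$ satisfies $\tilde M_k(0,0)=1$ and $\tilde M_k\le2$ on this cylinder, so $v_k$ and $\nabla v_k$ are uniformly bounded there. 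The decisive role of \eqref{boundc} is that the coefficients flatten: $\|\tilde c_k\|_{C^0}\le C_1$, $\tilde c_k\ge C_2$, while $[\tilde c_k]_{C^\alpha(B_L)}=\lambda_k^\alpha[c_k]_{C^\alpha}\to0$ on each ball; hence, along a subsequence, $\tilde c_k\to c_\infty$ locally uniformly for some constant $c_\infty\in[C_2,C_1]$. Interior parabolic Schauder estimates, using the uniform $C^\alpha$ bound on $\tilde c_k$ together with the bound on $v_k$, then give $v_k\to v$ in $C^{2,1}_{\mathrm{loc}}(\mathbb R^N\times\mathbb R)$, where $v\ge0$ is bounded, solves $\partial_s v-\Delta v=c_\infty v^p$, and is nontrivial because $v^{(p-1)/2}(0,0)+|\nabla v(0,0)|^{(p-1)/(p+1)}=1$. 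Setting $w=c_\infty^{1/(p-1)}v$ yields a bounded, nonnegative, nontrivial entire solution of $w_t-\Delta w=w^p$.

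Finally I would extract a contradiction from Theorem~A in each of the two alternatives. If $p<p_B$, the solution $w$ is excluded directly by Theorem~A(ii). In the radial case the key observation is that the blow-up limit is one-dimensional: since $u_k$ is a function of $r=|x|$ and the centers satisfy $|x_k|\in[1,2]$ (bounded away from the origin), every derivative of $v_k$ tangent to a sphere $\{|x|=\text{const}\}$ vanishes identically, while $\nabla_y|x_k+\lambda_k y|=(x_k+\lambda_k y)/|x_k+\lambda_k y|\to e_\infty$ locally uniformly for a unit vector $e_\infty$; hence $\nabla v$ is everywhere parallel to $e_\infty$ and $v=v(e_\infty\cdot y,s)$ is planar. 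Thus $w$ descends to a bounded, nonnegative, nontrivial entire solution of the one-dimensional equation $w_s-w_{\xi\xi}=w^p$, which is ruled out by Theorem~A(ii) in dimension $N=1$, where $p_B=\infty$ and the admissible range is all $1<p<\infty$. This is exactly why no upper bound on $p$ is needed in the radial alternative.

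I expect the main obstacle to lie in the passage to the limit rather than in the bookkeeping of the rescaling. Specifically, the two delicate points are (a) the coefficient-flattening step, which genuinely requires both the $C^\alpha$ bound and the uniform positive lower bound $C_2$ from \eqref{boundc} in order to produce a \emph{positive constant} $c_\infty$ in the limit equation, and (b) the rigorous reduction of the radial blow-up limit to a genuinely one-dimensional profile, which is what channels the argument into the $N=1$ Liouville theorem and removes any restriction on $p$. The remaining compactness and regularity steps are standard once the uniform $\tilde M_k\le2$ bound and the convergence $\tilde c_k\to c_\infty$ are in hand.
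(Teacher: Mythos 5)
Your proposal is correct and follows essentially the same route as the paper: contradiction via the doubling lemma of \cite{PQS07} applied to $M=u^{(p-1)/2}+|\nabla u|^{(p-1)/(p+1)}$, rescaling by $\lambda_k=M_k^{-1}(x_k,t_k)\to 0$, flattening of the coefficient to a positive constant thanks to \eqref{boundc}, and a contradiction with Theorem~A(ii). The only (immaterial) variation is in the radial case: you keep the rescaled problem in ${\mathbb R}^N$ and show the limit is planar because $\nabla v_k$ stays parallel to $(x_k+\lambda_k y)/|x_k+\lambda_k y|\to e_\infty$, whereas the paper rescales directly in the radial variable and lets the drift term $\frac{N-1}{\rho+r_k/\lambda_k}v_\rho$ vanish since $r_k/\lambda_k\to\infty$ — both exploit that $|x_k|\in(1,2)$ is bounded away from the origin and reduce to the $N=1$ case of Theorem~A(ii).
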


\begin{proof} 
We follow the argument in \cite{PQS07b}, we denote the parabolic distance
\begin{align}\label{dist}
d_P((x,t),(y,s)):=|x-y|+|t-s|^{1/2}.
\end{align}
Let $D=\mathcal C\times (0,T)\in {\mathbb R}^{N+1}$ then the estimate (\ref{est}) can be written as 
\begin{align}
u(x,t)+|\nabla u(x,t)|^{2/(p+1)}\leq C\bigl(1+d^{-2/(p-1)}((x,t),\partial D)\bigr),\; (x,t)\in D
\end{align}

Arguing by contradiction, we suppose that there exist sequences $c_k, u_k, T_k$
verifying (\ref{boundc}), (\ref{eqcup}) and points $(y_k,\tau_k)$, such that the functions 
$$M_k=|u_k|^{(p-1)/2}+|\nabla u_k|^{(p-1)/(p+1)}$$
satisfy
$$M_k(y_k,\tau_k)>2k\bigl(1+d_P^{-1}((y_k,\tau_k),\partial D_k)\bigr)> 2k\,d_P^{-1}((y_k,\tau_k),\partial D_k\bigr),\quad  D_k=\mathcal C\times (0,T_k).$$
By the doubling lemma in \cite[Lemma 5.1]{PQS07} with $X={\mathbb R}^{N+1}$, equipped with parabolic distance $d_P$, there exists $(x_k,t_k)\in D_k$ such that
$$M_k(x_k,t_k)\geq M_k(y_k,\tau_k),\quad M_k(x_k,t_k)>2k\,d_P^{-1}((x_k,t_k),\partial D_k),$$
and 
\begin{align}
M_k(x,t)\leq &2M_k(x_k,t_k), \quad\hbox{ for all $(x,t)$ such that } d_P((x,t),(x_k,t_k))\leq kM_k^{-1}(x_k,t_k).\label{ineqMk}
\end{align}
We note that $(x,t)$ satisfying (\ref{ineqMk}) is automatically contained in $D_k$.

We have 
\begin{equation}\label{convlambda}
\lambda_k:=M_k^{-1}(x_k,t_k)\to 0 \quad \text{as }  k\to\infty,
\end{equation}
due to $M_k(x_k,t_k)\geq M_k(y_k,\tau_k)>2k$. 

We now consider the nonradial and radial cases separately.

{\it A. The nonradial case. } Let 
$$v_k(y,s)=\lambda_k^{2/(p-1)}u_k(x_k+\lambda_k y,t_k+\lambda_k^2 s),\qquad \tilde c_k(y)=c_k(x_k+\lambda_k y), \quad (y,s)\in \tilde{D}_k,$$
where 
$$\tilde{D}_k=\{|y|<k/2\}\times\{|s|<k^2/4\}.$$
We note that $|v_k|^{(p-1)/2}(0,0)+|\nabla v_k|^{(p-1)/(p+1)}(0,0)=1$, 
\begin{equation}\label{boundvk}
\left[|v_k|^{(p-1)/2}+|\nabla v_k|^{(p-1)/(p+1)}\right](y,s)\leq 2, \quad (y,s)\in \tilde{D}_k,
\end{equation}
due to (\ref{ineqMk}), and we see that $v_k$ satisfies
\begin{equation}\label{eqnvk}
\partial_s v_k-\Delta v_k=\tilde c_k(y)v_k^p, \quad (y,s)\in \tilde{D}_k.
\end{equation}

On the other hand, due to (\ref{boundc}), we have $C_2\leq \tilde c_k\leq C_1$ and, for each $R>0$ and $k\geq k_0(R)$ large enough,
\begin{equation}\label{boundAscolic}
|\tilde c_k(y)-\tilde c_k(z)|\leq C_1|\lambda_{k}(y-z)|^\alpha\leq C_1|y-z|^\alpha,\quad |y|,|z|\leq R.
\end{equation}
Therefore, by Ascoli's theorem, there exists $\tilde c$ in $C({\mathbb R}^N)$, with $\tilde c\geq C_2$ such that, 
after extracting a subsequence,
$\tilde c_k\to \tilde c$ in $C_{loc}({\mathbb R}^N)$. 
Moreover, (\ref{boundAscolic}) and (\ref{convlambda}) imply that $|\tilde c_k(y)-\tilde c_k(z)|\to 0$ as $k\to\infty$, so that the function $\tilde c$ is actually a constant $C>0$.

Now, for each $R>0$ and $1<q<\infty$, by (\ref{eqnvk}), (\ref{boundvk}) and interior parabolic $L^q$ estimates, 
the sequence $v_k$ is uniformly bounded in $W^{2,1}_q(B_R\times (-R, R))$.  
Using standard imbeddings and interior parabolic $L^q$ estimates, after extracting a subsequence,  
we may assume that $v_k\to v$ in $C^{2,1}_{loc}({\mathbb R}^N\times {\mathbb R})$.
It follows that $v\geq 0$ is a classical solution of
$$v_s-\Delta v= Cv^p, \quad (y,s)\in  {\mathbb R}^N\times {\mathbb R},$$
and $|v|^{(p-1)/2}(0,0)+|\nabla v|^{(p-1)/(p+1)}(0,0)=1$.
Since $p<p_B$, this contradicts Theorem A(ii).

\medskip
{\it B. The radial case. } Since $u_k, c$ are radial, we write $u_k=u_k(r,t)$, $c_k=c_k(r)$ and $M_k=M(r,t)$, where $r=|x|$. Then $u_k$ solves the equation
\begin{align*}
 u_t-u_{rr}-\frac{N-1}{r}u_r=c_k(r)u^p.
\end{align*}
Assume that $|x_k|=r_k$, it follows from (\ref{ineqMk}) that 
\begin{equation*}
M_k(r,t)\leq 2M_k(r_k,t_k), \quad\hbox{ for all $(r,t)$ such that } |r-r_k|+|t-t_k|^{1/2}\leq k\lambda_k:=kM^{-1}_k(r_k,t_k).
\end{equation*}
We rescale by
\begin{align*}
 v_k(\rho,s):=\lambda_k^{2/(p-1)}u_k(r_k+\lambda_k\rho, t_k+\lambda_k^2 s), \quad (\rho, s)\in \tilde{D}_k,
\end{align*}
where
$$\tilde{D}_k=\big(-\min(r_k/\lambda_k,\, k/2),\; k/2\big)\times(-k^2/4, k^2/4).$$
Then $v_k$ solves the equation
\begin{align*}
 v_s-v_{\rho\rho}-\frac{N-1}{\rho+r_k/\lambda_k}v_\rho=\tilde{c_k}v^p, \quad \tilde{c_k}(\rho)=c_k(r_k+\lambda_k\rho), \quad (\rho,s)\in \tilde{D}_k,
\end{align*}
and we note that $|v_k|^{(p-1)/2}(0,0)+|\nabla v_k|^{(p-1)/(p+1)}(0,0)=1$, 
\begin{equation*}
\left[|v_k|^{(p-1)/2}+|\nabla v_k|^{(p-1)/(p+1)}\right](\rho,s)\leq 2, \quad (\rho,s)\in \tilde{D}_k.
\end{equation*}
Similar to the nonradial case,  after extracting a subsequence,  we may assume that $\tilde {c}_k(\rho)\to C$ in $C_{loc}({\mathbb R})$. Since $r_k\in (1,2)$  then $r_k/\lambda_k \to \infty$. Passing to the limit, we obtain a nonnegative bounded solution $v$ of the equation
$$v_s-v_{\rho\rho}=Cv^p \quad \text{ in } \quad {\mathbb R}\times {\mathbb R}$$
and
$$|v|^{(p-1)/2}(0,0)+|\nabla v|^{(p-1)/(p+1)}(0,0)=1.$$
This contradicts Theorem A(ii) for $N=1$ and concludes the proof.
\end{proof}
{\it Proof of Theorem~\ref{th2}.} 
Assume either $\Omega=\{x\in{\mathbb R}^N;\, 0<|x|<\rho\}$ and $0<|x_0|<\rho/2$,
or $\Omega=\{x\in{\mathbb R}^N;\, |x|>\rho\}$ and $|x_0|>2\rho$.
Let $R=\textstyle\frac23\, |x_0|$ and we denote
\begin{align*}
U(y,s)=R^{\frac{2+a}{p-1}}u(Ry, R^2s).
\end{align*}
Then $U$ is a solution of
\begin{align*}
U_s-\Delta U=c(y)U^p,\ \ (y,s)\in \mathcal C\times (0, R^{-2}T),\quad\hbox{ with } c(y)=|y|^a,\;\; \mathcal C=\{y\in {\mathbb R}^N: 1<|y|<2\}.
\end{align*}
Notice that $|y| \in [1,2]$ for all $y\in \overline{\mathcal C} $.
Moreover $\|c\|_{C^1(\overline{\mathcal C})}\leq C(a)$.
Then applying Lemma~\ref{lem3a}, we have 
$$U(R^{-1}x_0,R^{-2}t)+|\nabla U(R^{-1}x_0,R^{-2}t)|^{2/(p+1)}\leq C\bigl(1+(R^{-2}t)^{-1/(p-1)}+(R^{-2}T-R^{-2}t)^{-1/(p-1)}\bigr),$$
 hence
$$R^{a/(p-1)}u(x_0,t)+ \big|R^{a/(p-1)}\nabla u(x_0,t)\big|^{2/(p+1)}\leq C \bigl(t^{-1/(p-1)}+(T-t)^{-1/(p-1)}+R^{-2/(p-1)}\bigr),$$
which yields the desired conclusion. 
\qed

\medskip
{\it Proof of Theorem~\ref{th3}.} By the similar argument,   we have the same estimate (\ref{est}) for radial solution with finite number of sign-changes. The only  thing taken into consideration is that, in the last step of proof of Lemma~\ref{lem3a}, we have a contradiction with Liouville-type theorem for nodal solution (see \cite[Theorem 1.4]{BPQ11}). The rest of proof is similar.\qed

\section{Liouville type theorem}
In this section, we will only prove Theorem~\ref{th1a}. And by this method, one can prove Theorem~\ref{th1b} similarly.
The proof is based on properties of energy functional. We note that the solutions in the case of the whole space ${\mathbb R}^N$ need not a priori belong to the energy space. However, as shown in the following lemma, this will turn out to be true thanks to the spatial decay estimates in (\ref{decay1}). Moreover the case $a<0$ is more delicate and requires additional arguments.

For $u$ solution of equation (\ref{1}) in ${{\mathbb R}^N}\times {\mathbb R}$, we denote (formally) the energy functional 
\begin{align}\label{energyfun}
 E(t):=\frac12\int_{{\mathbb R}^N}|\nabla u(t)|^2dx-\frac{1}{p+1}\int_{{\mathbb R}^N}|x|^au^{p+1}(t)dx.
\end{align}

\begin{lemma}\label{energy}
Assume $p<p_S(a)$ and (\ref{condp}). For any solution $u$ of equation  (\ref{1}) in ${{\mathbb R}^N}\times {\mathbb R}$, the energy functional (\ref{energyfun}) is well defined for any $t\in {\mathbb R}$. Moreover, for any $t_1<t_2$, we have 
\begin{align}\label{energyine}
 \int_{t_1}^{t_2}\int_{{\mathbb R}^N}u_t^2(t)dxdt\leq -E(t_2)+E(t_1).
\end{align}
  \end{lemma}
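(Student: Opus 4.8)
The plan is to establish the energy identity by a standard multiplication-by-$u_t$ argument, with the main work being to justify that all integrals converge and that the formal integration by parts is legitimate in the unbounded domain ${\mathbb R}^N$. The key input is the decay estimate from Remark~\ref{rm1}(b) (the version of Theorem~\ref{th2}(ii)/Theorem~\ref{th3}(ii) over $\mathbb R$ in time), which gives
\begin{align}\label{decay1plan}
|u(x,t)|\leq C|x|^{-(2+a)/(p-1)},\qquad |\nabla u(x,t)|\leq C|x|^{-(p+1+a)/(p-1)},\quad t\in{\mathbb R}.
\end{align}
First I would check that the condition $p<p_S(a)$ is exactly what makes the two integrals in $E(t)$ converge at spatial infinity: substituting \eqref{decay1plan}, the gradient term behaves like $|x|^{-2(p+1+a)/(p-1)}$ and the potential term like $|x|^{a}\cdot|x|^{-(2+a)(p+1)/(p-1)}$, and one verifies that both exponents exceed $-N$ precisely when $p<p_S(a)$. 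Near the origin (relevant when $a<0$), the same decay-type estimate applied on a small ball, together with $a>-2$, shows the potential term $|x|^a u^{p+1}$ is locally integrable, so $E(t)$ is well-defined and finite for every $t$.

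Next I would derive \eqref{energyine}. On a bounded domain the computation is formal-but-clean: multiply \eqref{1} by $u_t$, integrate over $B_\rho\times(t_1,t_2)$, and integrate by parts in $x$ to get $\int u_t^2 = -\frac{d}{dt}E_\rho(t)$ plus a boundary flux term $\int_{\partial B_\rho}\partial_\nu u\,u_t$, where $E_\rho$ is the energy restricted to $B_\rho$. The strategy is to pass to the limit $\rho\to\infty$. Convergence of the interior terms $E_\rho(t_i)\to E(t_i)$ follows from the integrability just established, while the surface integral must be shown to vanish along a suitable sequence $\rho_j\to\infty$. For this I would use \eqref{decay1plan} to bound $\nabla u$ and estimate $u_t$; since $u$ solves a uniformly parabolic equation on $\{|x|>\rho\}$ with the stated decay, interior parabolic estimates (of the type already invoked in Lemma~\ref{lem3a}) give $|u_t(x,t)|\lesssim |x|^{-(2+a)/(p-1)-2}$, so the flux over $\partial B_\rho$ is controlled by $\rho^{N-1}\cdot\rho^{-(p+1+a)/(p-1)}\cdot\rho^{-(2+a)/(p-1)-2}$, which tends to $0$ as $\rho\to\infty$ whenever $p<p_S(a)$.

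For the case $a<0$ with $0\in{\mathbb R}^N$, an extra inner excision is needed since $u$ need only be a distributional solution near the origin. Here I would work on the annulus $\{\eps<|x|<\rho\}$, producing in addition an inner boundary flux over $\partial B_\eps$, and show it vanishes as $\eps\to 0$. Using the regularity result (Lemma~\ref{lemappe1}) that classical solutions in the sense \eqref{solclass} are distributional solutions, together with the singularity estimate of Theorem~\ref{th2}(i) near $x=0$, the inner flux is bounded by a positive power of $\eps$ and drops out. The hardest point of the argument is exactly this control of the two boundary fluxes uniformly in $t$ on the finite interval $[t_1,t_2]$: one must upgrade the pointwise spatial decay of $u$ and $\nabla u$ to a matching decay for $u_t$, which is not given directly but must be extracted from parabolic regularity applied to the rescaled equation, taking care that the coefficient $|x|^a$ is smooth and nondegenerate away from the origin. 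Once both fluxes are shown to vanish, letting $\eps\to 0$ and $\rho\to\infty$ yields \eqref{energyine}, and the nonpositivity of the right-hand side (giving $E$ nonincreasing) is an immediate consequence.
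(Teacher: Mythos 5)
Your treatment of $a\geq 0$ and of spatial infinity is essentially the paper's own: the paper derives the same extra decay $|u_t(x,t)|\leq C|x|^{-2-(2+a)/(p-1)}$ for $|x|>2$ by rescaling plus parabolic bootstrap, and with the decay of $u$, $\nabla u$, $u_t$ the outer flux indeed vanishes. The genuine gap is the case $a<0$, where both of your claims about the origin are unjustified. First, well-definedness of $E(t)$: you never actually establish $\int_{B_1}|\nabla u(t)|^2\,dx<\infty$. The only pointwise bounds available near $x=0$ are the singularity estimates of Theorem~\ref{th2}(i), $u\lesssim|x|^{-(2+a)/(p-1)}$ and $|\nabla u|\lesssim|x|^{-(p+1+a)/(p-1)}$; but $2(p+1+a)/(p-1)=2+(4+2a)/(p-1)>N$ exactly when $p<p_S(a)$, so this bound can never give $\nabla u(t)\in L^2(B_1)$ (and likewise $|x|^au^{p+1}\lesssim|x|^{-2-(4+2a)/(p-1)}\notin L^1(B_1)$, so for the potential term you must use boundedness of $u$ near $0$ from the class (\ref{solclass}) together with $a>-2$, not the decay estimate). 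The paper proves the gradient integrability by a different mechanism: $|x|^au^p\in L^\infty_{loc}({\mathbb R};L^{\tilde q}(B_2))$ for $\tilde q<N/|a|$, then the variation-of-constants formula (Lemma~\ref{lemappe2}) gives $u\in L^\infty_{loc}({\mathbb R};W^{2-\delta,\tilde q}(B_1))$, which embeds into $W^{1,2}(B_1)$.

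Second, the inner flux: the claim that it is ``bounded by a positive power of $\eps$'' fails for the same reason. With the bounds above and $|u_t|\lesssim|x|^{-2-(2+a)/(p-1)}$, the flux over $\partial B_\eps$ is controlled only by $\eps^{N-4-(4+2a)/(p-1)}$, and since $p<p_S(a)$ forces $(4+2a)/(p-1)>N-2$, this exponent is $<-2$: the bound \emph{diverges} as $\eps\to0$. Using instead boundedness of $u$ near the origin and oscillation-type interior estimates gives only $|\nabla u|=o(|x|^{-1})$ and $|u_t|=o(|x|^{-2})$, i.e. a flux of size $o(\eps^{N-4})$, which is useless for $N\leq 3$. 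Nor can you extract a good sequence $\eps_j\to0$ by integrability, since $u_t\in L^2$ near the origin is itself unknown when $-2<a\leq-N/2$ (the remark following Lemma~\ref{energy} notes that this local regularity is only clear for $a>-N/2$). This is precisely why the paper abandons excision for $a<0$ and argues by approximation: it solves $\partial_t v_\eps-\Delta v_\eps=(|x|+\eps)^a v_\eps^p$ with data $u(t_1)$, uses comparison ($0<v_\eps\leq u$, monotone in $\eps$), proves uniform decay and the exact energy identity for these nonsingular solutions, identifies $\lim v_\eps=u$ via a Gronwall/variation-of-constants argument, and passes to the limit by monotone convergence and lower semicontinuity --- which is also why the lemma asserts an inequality rather than an identity. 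Without this step, or a new proof of local regularity of $\nabla u$ and $u_t$ at the origin, your argument does not close for $a<0$.
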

\begin{proof}
By Theorem~\ref{th2} (see Remark~\ref{rm1}(b)), $u$ has spatial decay estimates 
\begin{align}\label{decay1}
u(x,t) \leq C |x|^{-\frac{2+a}{p-1}}, \qquad |\nabla u(x,t)| \leq C |x|^{-1-\frac{2+a}{p-1}}, \quad |x|>1. 
\end{align}
We first show that 
\begin{align}
 \label{decay2}
|u_t(x,t)|\leq C |x|^{-2-\frac{2+a}{p-1}},  \quad |x|>2.
\end{align}
Indeed, for any $R>2$, let $U(y,s)=R^{(2+a)/(p-1)}u(Ry, R^2s)$ then $U(y,s)\leq C$ in $(B_4\setminus B_{1/2})\times \mathbb R$, where $C =C(N,p,a)$, and 
\begin{align*}
 U_s-\Delta U=|y|^a U^p, \quad (y,s)\in (B_4\setminus B_{1/2})\times \mathbb R
\end{align*} 
It follows from  boostrap argument for parabolic regularity that $|U_s(y,s)|\leq C$ for all $(y,s)\in  (B_2\setminus B_{1})\times \mathbb R$, where $C$ does not depend on $R$. Hence,
$$|u_t(x,t)| \leq CR^{-2-(2+a)/(p-1)} \leq C|x|^{-2-(2+a)/(p-1)}, \quad (x,t) \in (B_{2R}\setminus B_{R})\times \mathbb R,$$
and (\ref{decay2}) follows.

Combining these decay estimates with  $p<p_S(a)$, we have for any $t\in {\mathbb R}$, 
$$|\nabla u(t)|^2 \in L^1({\mathbb R}^N\setminus B_1),\; \;u_t^2(t) \in L^1({\mathbb R}^N\setminus B_1), \;\; \quad |x|^au^{p+1}(t)\in L^1({\mathbb R}^N).$$
Hence, if $a\geq 0$, since there is no singularity at $x=0$,  the energy functional (\ref{energyfun}) is well defined and (\ref{energyine}) holds since 
\begin{align*}
 \frac{dE(t)}{dt}=-\|u_t(t)\|_2^2.
\end{align*} 

We now consider case $a<0$. Since the term $f=|x|^au^{p}\in L^\infty_{loc}({\mathbb R}; L^{\tilde{q}}(B_2))$ for any $1<\tilde{q}<N/|a|$, using the cut-off function and variation-of-constants formula (obtained by Lemma~\ref{lemappe2} in Appendix), we have
$$u\in L^\infty_{loc}({\mathbb R}; W^{2-\delta, \tilde{q}}(B_1)), \quad 1<\tilde{q}<N/|a|.$$
Choose $\delta>0$ small enough such that $W^{2-\delta, \tilde{q}}(B_1)$ is continuously embedded into $W^{1,2}(B_1)$. Hence, $|\nabla u(t)|^2 \in L^1({\mathbb R}^N)$ and the energy functional (\ref{energyfun}) is well defined. To prove (\ref{energyine}),  we may assume that $t_1=0, t_2>0$, we consider the following problem
\begin{align*}
\begin{cases} \partial_t v_\varepsilon-\Delta v_\varepsilon=(|x|+\varepsilon)^a v_\varepsilon^p,  \quad (x,t)\in {\mathbb R}^N\times [0, t_2],\\
v(x,0)=u(x,0).
\end{cases}
\end{align*}
Note that $a<0$, by comparison property we have $v_{\varepsilon}$ is increasing and $0<v_\varepsilon\leq u$. This implies in particular that $v_\varepsilon$ satisfies the first part of spatial estimate (\ref{decay1}). Let us show spatial decay of $\nabla v_\varepsilon$.
For any $R>2$, let $V(y,s)=R^{(2+a)/(p-1)}v_\varepsilon(Ry, R^2s)$, then $V(y,s)\leq C$ in $(B_4\setminus B_{1/2})\times \mathbb R$, where $C =C(N,p,a)$, and 
\begin{align*}
 V_s-\Delta V=(|y|+\varepsilon/R)^a V^p, \quad (y,s)\in (B_4\setminus B_{1/2})\times \mathbb R
\end{align*}
For any $\varepsilon\leq 1/2$, we have $1/4<|y|+(\varepsilon/R)<5$ for all $1/2<|y|<4$. The parabolic estimates imply $|\nabla V(y,s)|\leq C$ for all $(y,s)\in (B_2\setminus B_{1})\times \mathbb R$, where $C$ does not depend on $R$. Hence, $|\nabla v_\varepsilon(x,t)|\leq CR^{-1-(2+a)/(p-1)}$ for all $(x,t)\in (B_{2R}\setminus B_{R})\times \mathbb R$. Therefore, $|\nabla v_\varepsilon(x,t)|\leq C|x|^{-1-(2+a)/(p-1)}$ for any $\varepsilon<1/2$ and $|x|>2$.

 Let $v=\lim v_{\varepsilon}$ and $e^{t\Delta}$ denote the heat semigroup in ${\mathbb R}^N$, we show that $v=u$. Indeed, by the variation-of-constants formula, we deduce that
\begin{align*}
 u(t)-v(t)=\int_{0}^{t}e^{(t-s)\Delta}(|.|^a(u^p-v^p))ds=\int_{0}^{t}e^{(t-s)\Delta}(|.|^aH(u,v)(u-v))ds,
\end{align*}
where $0\leq H(u,v)\leq pu^{p-1}$. Let $w=u-v$ then $w(0)=0$ and
\begin{align*}
 \|w(t)\|_\infty\leq \int_{0}^{t} (t-s)^{-N/(2q)}\|w(s)\|_\infty \||x|^aH(u,v)\|_qds.
\end{align*}
We choose $q=1$ when $N=1$, and $q=N/(|a|+\gamma)$ when $N\geq 2$, where $\gamma>0$ satisfies $|a|+\gamma< 2$. Then $N/(2q)<1$ and 
\begin{align*}
 \||x|^aH(u,v)\|_q\leq  \|p|x|^a u^{p-1}\|_q  \leq C.
\end{align*}
Hence, 
\begin{align*}
 \|w(t)\|_\infty\leq C\int_{0}^{t} (t-s)^{-N/(2q)}\|w(s)\|_\infty ds.
\end{align*}
Consequently, $w\equiv 0$, or $u\equiv v$.

Let us denote by $E_\varepsilon(t)$ the energy functional with respect to $v_\varepsilon$, which is well-defined due to the spatial decay of $v_\varepsilon$ and $\nabla v_\varepsilon$. Then we have
\begin{align}\label{appro}
 \int_{0}^{t_2}\int_{{\mathbb R}^N}|\partial_t v_\varepsilon|^2(t)dxdt = -E_\varepsilon(t_2)+E(0).
\end{align}
Hence
\begin{align*}
 \int_{0}^{t_2}\int_{{\mathbb R}^N}|\partial_t v|^2(t)dxdt&\leq \liminf\limits_{\varepsilon\to 0}\int_{0}^{t_2}\int_{{\mathbb R}^N}|\partial_t v_\varepsilon|^2(t)dxdt\\
&\leq \liminf\limits_{\varepsilon\to 0}\bigg(-\frac{1}{2}\int_{{\mathbb R}^N}|\nabla v_\varepsilon (t_2)|^2dx+\frac{1}{p+1}\int_{{\mathbb R}^N} |x|^a v_\varepsilon^{p+1}(t_2)dx\bigg) +E(0)
\end{align*}
By monotone convergence, we have 
$$\int_{{\mathbb R}^N}|x|^av_\varepsilon^{p+1}(t_2)dx\to \int_{{\mathbb R}^N}|x|^au^{p+1}(t_2)dx.$$
On the other hand, 
\begin{align*}
 \liminf\limits_{\varepsilon\to 0}\bigg(-\frac{1}{2}\int_{{\mathbb R}^N}|\nabla v_\varepsilon (t_2)|^2dx\bigg)\leq -\liminf\limits_{\varepsilon\to 0}\frac{1}{2}\int_{{\mathbb R}^N}|\nabla v_\varepsilon (t_2)|^2dx \leq -\frac{1}{2}\int_{{\mathbb R}^N}|\nabla v (t_2)|^2dx
\end{align*}
Therefore, (\ref{energyine}) follows. Lemma is proved.
\end{proof}
\begin{remark}{\rm 
 If we assume in addition that $a>-N/2$ then by parabolic regularity, one can see that $u_t\in L^2_{loc}({\mathbb R}^N)$ for any $t\in \mathbb R$. This combined with spatial decay estimates implies 
$$\frac{dE(t)}{dt}= -\|u_t(t)\|^2_{L^2({\mathbb R}^N)},$$
and Lemma~\ref{energy} is then straightforward.}
\end{remark}

\medskip

{\it Proof of Theorem~\ref{th1a}. } We shall prove (i) and (ii) at the same time.  Assume that $u$ is a bounded nontrivial nonnegative solution of (\ref{1}). Then $u$ satisfies spatial decay estimates (\ref{decay1}). Combining with the boundedness of $u$ and $p<p_S(a)$ we have
 \begin{align*}
  |E(u(t))|\leq C, \quad \forall t\in R.
 \end{align*}
It follows from Lemma~\ref{energy} that
\begin{align*}
 \int_{\mathbb R}\int_{{\mathbb R}^N}u_t^2(x,t)dxdt<\infty.
\end{align*}
Consequently, there exists $t_k\to \infty$ such that \begin{align}\label{z1a}
u_t(t_k)\to 0 \quad \text{in}\quad  L^2({\mathbb R}^N).
  \end{align}
We now show that 
\begin{align}\label{z2}
 \|u(t_k)\|_{L^\infty({\mathbb R}^N)}\to 0.
\end{align}
Indeed, if not then there exists $x_k$ such that $u(x_k,t_k)\geq C>0$. It follows from  spatial decay estimates of $u$  that $x_k$ is bounded. We may assume that $x_k\to x_\infty$. Let $v_k(x):=u(x,t_k)$, then there exists a subsequence which converges in $C_{loc}({\mathbb R}^N)$ to a function $v$ satisfying 
$$-\Delta v=|x|^a v^p,$$
and $v(x_\infty)\geq C$. We note also that if $u$ is radial then so is $v$.  This contradicts Liouville-type theorem for Hardy-H\'enon equations (see \cite{ BVG10, PhS}). Hence (\ref{z2}) is true.
 
Let $\alpha_k=\int_{{\mathbb R}^N}|x|^au^{p+1}( t_k)dx$, for any $R>0$ we have 
\begin{align*}
 \alpha_k&=\int_{|x|<R}|x|^au^{p+1}( t_k)dx+\int_{|x|>R}|x|^au^{p+1}( t_k)dx\\
&\leq CR^{N+a}\|u(t_k)\|^{p+1}_{L^\infty({\mathbb R}^N)}+ CR^{-[(2+a)(p+1)/(p-1)-N-a]}.
\end{align*}
Hence, $\limsup_{k\to \infty} \alpha_k \leq  CR^{-[(2+a)(p+1)/(p-1)-N-a]}$. Letting $R\to \infty$, we obtain
\begin{align}\label{z3}
\lim_{k\to \infty} \alpha_k=\lim_{k\to \infty}\int_{{\mathbb R}^N}|x|^au^{p+1}(t_k)dx=0.
\end{align}
We next show that $\|\nabla u(t_k)\|_{L^2({\mathbb R}^N)}\to 0$. Let $\varphi$ be a smooth function in ${\mathbb R}^N$, $0\leq \varphi\leq 1$, $\varphi(x)=0$ in $B_1$, $\varphi(x)=0$ if $|x|\geq 2$ and $|\nabla \varphi|\leq C\varphi^{1/2}$. For any $R>0$, let $\varphi_R(x)=\varphi(x/R)$, then we  have
\begin{align*}
\int_{{\mathbb R}^N}|\nabla u(t_k)|^2\varphi_Rdx&=- \int_{{\mathbb R}^N}u(t_k)\nabla u(t_k).\nabla \varphi_Rdx+\int_{{\mathbb R}^N}\big(|x|^au^{p+1}(t_k)-u_t(t_k)u(t_k)\big)\varphi_Rdx\\
&\leq \frac12 \int_{{\mathbb R}^N}|\nabla u(t_k)|^2\varphi_Rdx+\frac12\int_{{\mathbb R}^N}u^2(t_k)|\nabla\varphi_R|^2\varphi^{-1/2}_Rdx\\
&\quad +\int_{{\mathbb R}^N}\big(|x|^au^{p+1}(t_k)-u_t(t_k)u(t_k)\big)\varphi_Rdx.
 \end{align*}
Using (\ref{z1a})-(\ref{z3}) and the compact support of $\varphi_R$, we deduce that
\begin{align*}
\lim\limits_{k\to \infty} \int_{B_R}|\nabla u(t_k)|^2dx =0,
\end{align*}
for any $R>0$. On the other hand, if follows from spatial decay estimate of $\nabla u$ that 
\begin{align}\label{z3b}
 \int_{{\mathbb R}^N}|\nabla u(t_k)|^2dx \leq \int_{B_R}|\nabla u(t_k)|^2dx+ \int_{|x|>R}|\nabla u(t_k)|^2dx\leq \int_{B_R}|\nabla u(t_k)|^2dx+ CR^{N-2-\frac{4+2a}{p-1}}.
\end{align}
By letting $k\to \infty$ and then $R\to\infty$ in (\ref{z3b}), we obtain
\begin{align}
\lim\limits_{k\to \infty} \int_{{\mathbb R}^N}|\nabla u(t_k)|^2dx=0.
\end{align}
Combinining this with (\ref{z3}) we obtain $E(t_k)\to 0$ as $k\to \infty$. 

Similarly, there exist $s_k\to -\infty$ such that $u_t(s_k)\to 0$ in $L^2({\mathbb R}^N)$ and we deduce that  $E(s_k)\to 0$ as $k\to \infty$.  It follows from Lemma~\ref{energy} that
\begin{align*}
 \int_{s_k}^{t_k}\int_{{\mathbb R}^N}u_t^2(t)dxdt\leq E(s_k)-E(t_k).
\end{align*}
Let $k\to \infty$ we obtain $u_t\equiv 0$. This contradicts Liouville-type theorem for Hardy-H\'enon elliptic equations (see \cite{ BVG10, PhS}).
\qed

\section{Problems with boundary condition}
In this section, we will prove Theorems ~\ref{th5}-\ref{th6}. Let $u$ be a nonnegative solution of problem (\ref{BVP}), and as in the previous section, we denote 
\begin{align}
 E(u(t))=E(t):=\frac12\int_{\Omega}|\nabla u(t)|^2dx-\frac{1}{p+1}\int_{\Omega}|x|^au^{p+1}(t)dx.
\end{align}
Then $E(u(t))$ is well defined and similar to Lemma~\ref{energy}, for $t_1<t_2$, we have 
\begin{align}
 \int_{t_1}^{t_2}\int_{\Omega}u_t^2(t)dxdt\leq-E(t_2)+E(t_1).
\end{align}
We need the following lemma.
\begin{lemma}\label{lem4}
 Consider problem (\ref{BVP}) with nonnegative initial data $u_0\in L^\infty\cap H^1_0(\Omega)$. If $E(u_0)<0$ then $T_{\max}(u_0)<\infty$. 
\end{lemma}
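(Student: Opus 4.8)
The plan is to prove that negative initial energy forces finite-time blow-up via the classical concavity (Levine-type) argument applied to the functional $y(t):=\int_\Omega u^2(t)\,dx$. First I would record the two basic differential identities satisfied by a global (hence classical, smooth for $t>0$) solution. Differentiating $y$ and using the equation together with the boundary condition $u=0$ on $\partial\Omega$, I obtain
\begin{align*}
 \tfrac12 y'(t)=\int_\Omega u u_t\,dx=-\int_\Omega|\nabla u|^2\,dx+\int_\Omega|x|^a u^{p+1}\,dx.
\end{align*}
Recalling the energy $E(t)=\tfrac12\int_\Omega|\nabla u|^2-\tfrac{1}{p+1}\int_\Omega|x|^a u^{p+1}$, I can eliminate the nonlinear term and write $\tfrac12 y'(t)=-2E(t)+\bigl(1-\tfrac{2}{p+1}\bigr)\int_\Omega|x|^a u^{p+1}\,dx$. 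Since the energy is nonincreasing (by the dissipation inequality $\frac{d}{dt}E=-\|u_t\|_2^2$ stated just before the lemma) and $E(u_0)<0$, we have $E(t)\le E(u_0)<0$ for all $t$, so $y'(t)\ge -4E(u_0)>0$; in particular $y(t)$ is strictly increasing and $\int_\Omega|x|^a u^{p+1}$ stays bounded below by a positive constant.

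Next I would set up the concavity estimate on the auxiliary function $J(t):=y(t)^{-\beta}$ for a suitable $\beta>0$ (equivalently, show $(y^{1+\beta})$ is convex and grows superlinearly). Differentiating twice and invoking the Cauchy–Schwarz inequality $\bigl(\int_\Omega u u_t\bigr)^2\le y(t)\int_\Omega u_t^2$, the heart of the matter is to prove a differential inequality of the form $y\,y''-(1+\beta)(y')^2\ge 0$ for some $\beta>0$. To produce it I would use the representation $\tfrac12 y'=-2E(t)+\kappa\int_\Omega|x|^a u^{p+1}$ with $\kappa=\tfrac{p-1}{p+1}>0$, differentiate the energy relation in time to express $y''=2\int_\Omega u_t^2+2\int_\Omega u u_{tt}$, and combine with the energy dissipation to bound $y''$ from below by a multiple of $\int_\Omega u_t^2$ plus a term controlled by $|E(u_0)|$. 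The exponent $p>1$ enters precisely through $\kappa>0$, which makes the negative-energy term reinforce rather than cancel. Once $y\,y''\ge(1+\beta)(y')^2$ is established with $\beta=\tfrac{p-1}{4}$ (the standard choice), the function $y^{-\beta}$ is concave, positive, and has strictly negative derivative at some time $t_0$ (because $y'>0$); a concave positive function with a negative slope must reach zero in finite time, forcing $y(t)\to\infty$ in finite time and hence $T_{\max}(u_0)<\infty$.

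The main obstacle I anticipate is justifying the second-derivative computation and the term $\int_\Omega u\,u_{tt}$ rigorously, since this requires enough regularity of $u$ in time and the legitimacy of differentiating under the integral sign and integrating by parts with the weight $|x|^a$; for $a<0$ the singular weight at the origin needs care, exactly the kind of delicacy flagged for the whole-space energy identity in Lemma~\ref{energy}. I would circumvent the direct use of $u_{tt}$ by working instead with the first-order identity only: combine $\tfrac12 y'=-2E(t)+\kappa\int|x|^a u^{p+1}$ with the energy inequality $E(t)\le E(u_0)-\int_{t_0}^t\|u_t\|_2^2$ and the Cauchy–Schwarz bound $(\tfrac12 y')^2\le y\int_\Omega u_t^2$ to derive a closed first-order differential inequality for $y$, for instance $y'(t)\ge c\,y(t)^{(p+1)/2}$ after using a Poincaré/Sobolev bound to dominate $\int_\Omega|x|^a u^{p+1}$ from below by a power of $\|u\|_2^2$; such a superlinear inequality integrates to a finite blow-up time directly. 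Whichever route is taken, the essential input is the sign $E(u_0)<0$ together with $p>1$, and the weighted integrability established already in the energy analysis.
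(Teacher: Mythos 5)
Your proposal, as you yourself anticipate, cannot run the classical concavity computation on $y(t)=\|u(t)\|_2^2$ directly (it needs $y''$, hence $u_{tt}$), so the route you actually commit to is the first-order inequality $y'(t)\ge c\,y(t)^{(p+1)/2}$. That route has a genuine gap: it hinges on the reverse H\"older bound $\int_\Omega|x|^a u^{p+1}\,dx\ge c\bigl(\int_\Omega u^2\,dx\bigr)^{(p+1)/2}$. By H\"older's inequality this bound holds when $|x|^{-2a/(p-1)}\in L^1(\Omega)$, i.e.\ always for $a\le 0$ on the bounded domain, but for $a>0$ only when $p>1+\frac{2a}{N}$; in the complementary regime the inequality is simply false, as one sees by concentrating mass at the origin where the weight degenerates: $u_\eps(x)=\eps^{-N/2}\phi(x/\eps)$ gives $\int_\Omega u_\eps^2\,dx\sim 1$ while $\int_\Omega|x|^a u_\eps^{p+1}\,dx\sim\eps^{\,a-N(p-1)/2}\to 0$ when $p<1+\frac{2a}{N}$. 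The lemma carries no such restriction---it is stated for all $p>1$, $a>-2$---and it is invoked in the proof of Theorem~\ref{th5} in ranges where $p\le 1+\frac{2a}{N}$ is perfectly possible (e.g.\ $N=3$, $a=3$, $p=2<\min(p_S,p_S(a))=5$). Note that the paper does use exactly your inequality, namely (\ref{bs1}) in the proof of Theorem~\ref{th6}, but only under the explicit hypothesis $p>1+\frac{2a}{N}$, which is unavailable here. So your argument proves a strictly weaker statement.

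The paper's proof removes both obstructions (the $u_{tt}$ regularity issue and the reverse H\"older) at once by running Levine's concavity argument on the \emph{time-integrated} quantity $M(t)=\frac12\int_0^t\|u(s)\|_2^2\,ds$ instead of on $y(t)$. Then $M''(t)=\int_\Omega uu_t\,dx=-(p+1)E(u(t))+\frac{p-1}{2}\int_\Omega|\nabla u(t)|^2\,dx\ge -(p+1)E(u_0)>0$ uses only your first-order identity, so $M',M\to\infty$; the integrated energy inequality stated before the lemma gives $M''(t)\ge(p+1)\int_0^t\|u_t(s)\|_2^2\,ds$; and Cauchy--Schwarz \emph{in space-time} yields
\begin{align*}
M(t)M''(t)\ \ge\ \frac{p+1}{2}\Bigl(\int_0^t\|u_t(s)\|_2^2\,ds\Bigr)\Bigl(\int_0^t\|u(s)\|_2^2\,ds\Bigr)\ \ge\ \frac{p+1}{2}\bigl(M'(t)-M'(0)\bigr)^2 .
\end{align*}
Since $M'(t)\to\infty$, this gives $MM''\ge(1+\alpha)(M')^2$ on some $[t_0,\infty)$ with $0<\alpha<\frac{p-1}{2}$, so $M^{-\alpha}$ is positive, concave and has strictly negative slope, hence must vanish in finite time---contradicting globality. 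This argument requires no lower bound on $\int|x|^a u^{p+1}$ in terms of $\|u\|_2$, no time derivative beyond $u_t$, and no restriction on $a$ or $p>1$, i.e.\ it covers the lemma's full range. If you wish to salvage your scheme, replace your pointwise-in-time step by this space--time Cauchy--Schwarz applied to $M$; as written, your proof is only valid for $a\le 0$ or $p>1+\frac{2a}{N}$.
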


\begin{proof} We follow the concavity method in \cite{Lev73} (see also \cite[Theorem 17.6]{QS07}).  Assume that $T_{\max}(u_0)=\infty$. Let $M(t)=\frac12\int_{0}^{t}\|u(s)\|_2^2ds$ then 
\begin{align*}
 M''(t)&=\int_{\Omega}uu_t(t)dx=-\int_{\Omega}|\nabla u(t)|^2dx+\int_{\Omega}|x|^au^{p+1}(t)dx\\
&=-(p+1)E(u(t))+\frac{p-1}{2}\int_{\Omega}|\nabla u(t)|^2dx\\
&\geq -(p+1)E(u_0)>0.
\end{align*}
Consequently,  $M'(t)\to \infty$ and $M(t)\to\infty$ as $t\to\infty$. Moreover,
\begin{align*}
 M''(t)\geq -(p+1)E(u(t))\geq -(p+1)E(u(t))+(p+1)E(u_0)\geq(p+1)\int_{0}^{t}\|u_t(s)\|_2^2ds,
\end{align*}
hence
\begin{align*}
 M(t)M''(t)&\geq \frac{p+1}{2}\left(\int_{0}^{t}\|u_t(s)\|_2^2ds\right) \left(\int_{0}^{t}\|u(s)\|_2^2 ds\right)\\
&\geq \frac{p+1}{2}\left(\int_{0}^{t}\int_{\Omega}u(x,s)u_t(x,s)dxds\right)^2\\
&=\frac{p+1}{2}\left(M'(t)-M'(0)\right)^2.
\end{align*}
Since $M'(t)\to \infty$ as $t\to \infty$, there exist $\alpha, t_0>0$ such that
$$M(t)M''(t)\geq (1+\alpha)(M'(t))^2, \quad t\geq t_0.$$
This guarantees that the nonincreasing function $t\mapsto M^{-\alpha}(t)$ is concave on $[t_0,\infty)$ which contradicts the fact that $M^{-\alpha}(t)\to 0$ as $t\to\infty$.
\end{proof}

{\it Proof of Theorem~\ref{th5}. } Assume that the bound (\ref{z4}) does not hold for global nonnegative solutions. Then there exist $t_k>0$ and $u_{0,k}\geq 0$ such that $\|u_{0,k}\|_\infty\leq C_0$ and the solutions $u_k$ with initial data $u_{0,k}$ satisfying
\begin{align}
 M_k:=u_k(x_k,t_k)=\sup\{u_k(x,t): x\in\Omega,\; t\in [0, t_k]\} \to \infty \quad\text { as } \quad k\to \infty.
\end{align}
By the point fixed argument, there exitss $\delta>0$ such that 
$$u_k(x,t)\leq 2C_0, \quad \forall (x,t)\in \Omega\times [0,\delta].$$
Hence $t_k\geq \delta$ for $k$ large enough. We will show by variation-of-constants formula that 
\begin{align}
\label{w1}
\sup_k\|u_k(\delta/2)\|_{H^1(\Omega)}\leq C.
 \end{align}
Indeed, (\ref{w1}) is straightforward if $a\geq 0$. If $a<0$  then 
\begin{align*}
\|\nabla u_k(\delta/2\|_2\leq C\|u_{0,k}\|_2+ C\int_{0}^{\delta/2}s^{-\frac12-\frac{N}{2}(\frac{1}{q}-\frac{1}{2})}\| |x|^a\|_q\, ds.
 \end{align*}
We now choose $q=1$ when $N=1$, and $q=\frac{N}{\gamma+|a|}$ when $N\geq 2$, where $\gamma>0$ is small such that $\gamma+|a|<2$. Then  $|x|^a\in L^q(\Omega)$ and $\frac12+\frac{N}{2}(\frac{1}{q}-\frac{1}{2})<1$. Hence (\ref{w1}).

Combining (\ref{w1}) with Lemma~\ref{lem4}, we obtain
\begin{align}\label{z6}
 0\leq E(u_k(\delta/2)\leq C.
\end{align}
We may assume that $x_k\to x_0\in \bar{\Omega}$. We denote $d_k=\text{dist}(x_k, \partial \Omega)$.  

{\it A. Nonradial case.}

{\it Case $A_1$: $x_0\in\Omega\setminus\{0\}$. }   Let
\begin{align}\label{w2}
 v_k(y,s)=\lambda_k^{2/(p-1)}u_k(x_k+\lambda_ky, t_k+\lambda_k^2s), \quad (y,s)\in {Q}_k,
\end{align}
where $\lambda_k=M_k^{-(p-1)/2}$ and  $Q_k=\{(y,s): (x_k+\lambda_ky, t_k+\lambda_k^2s)\in \Omega\times (0, t_k)\}$. It follows that $0\leq v_k\leq 1=v_k(0,0)$ and $v_k$ solves the problem
\begin{align*}
 \partial_sv_k-\Delta v_k&=|x_k+\lambda_k y|^av_k^p, \qquad (y,s)\in \tilde{Q}_k:=\{(y,s): |y|<\frac{d_k}{\lambda_k}, -\frac{t_k}{2\lambda_k^2}<s<0\}.
\end{align*}

Using parabolic $L^ p$-estimates together with standard embedding theorems, we may assume $v_k\to v$ in $C_{loc}^{0,0}({\mathbb R}^N\times (-\infty, 0))$, and 
$$v_s-\Delta v= |x_0|^a v^p, \quad \text{in}\; {\mathbb R}^N\times (-\infty, 0).$$
with $0\leq v\leq v(0,0)=1$. Using (\ref{z6}), we have
\begin{align*}
 \int_{\tilde{Q}_k}|\partial_sv_k|^2dyds&=\lambda_k^{\frac{4}{p-1}-N+2}\int_{t_k/2}^{t_k}\int_{|x-x_k|<d_k}|\partial_t u_k|^2dxdt\leq \lambda_k^{\frac{4}{p-1}-N+2}\int_{\delta/2}^{\infty}\int_{\Omega}|\partial_t u_k|^2dxdt\\
&\leq \lambda_k^{\frac{4}{p-1}-N+2}\left[E(u_k(\delta/2))-\lim_{t\to\infty}E(u_k(t))\right] \to 0 \text{ as } k\to \infty.
\end{align*}
Since $\partial_sv_k\to v_s$ in $ {\mathcal D'}({\mathbb R}^N\times (-\infty, 0)$ then $v_s\equiv 0$. This contradicts the Liouville-type theorem for Lane-Emden equation (see \cite{GS81a}).

{\it Case $A_2$: $x_0\in \partial\Omega$. } We rescaling $u$ as in (\ref{w2}).

If $d_k/\lambda_k\to \infty$ then we have the same contradition as in the first case. If $d_k/\lambda_k\to c>0$ then similarly, we have a function $v$ solving the problem
\begin{align*}
\begin{cases}
  v_s-\Delta v=l v^p\quad &\text{in } \; H_c^N\times(-\infty,0),\\
v=0&\text{on } \;  \partial H_c^N\times(-\infty,0),
\end{cases}
\end{align*}
and satisfying $0\leq v\leq v(0,0)=1$, where $ H_c^N:=\{y\in {\mathbb R}^n: y_1>-c\}$. As in case $A_1$, we have $v_s=0$, hence contradition.

{ \it Case $A_3$: $x_0=0$. } We have the following two possibilities:

(i) If $M_k|x_k|^{(2+a)/(p-1)}\leq C$, then let $\lambda_k=M_k^{-(p-1)/(2+a)}$ we have  $\lambda_k^{-1}x_k$ is bounded. We may assume that  $\lambda_k^{-1}x_k\to P$. Let 
\begin{align*}
 w_k(y,s)=\lambda_k^{(2+a)/(p-1)}u_k(x_k+\lambda_ky, t_k+\lambda_k^2s),
\end{align*}
Then $w_k$ solves
\begin{align*}
 \partial_sw_k-\Delta w_k&=|\lambda_k^{-1}x_k+ y|^aw_k^p, \quad (y,s)\in \tilde{Q}_k:=\{(y,s): |y|<\frac{d_k}{\lambda_k}, -\frac{t_k}{2\lambda_k^2}<s<0\}. 
\end{align*}
A similar limiting procedure as in Case $A_1$ then produces a solution $w$ of 
\begin{align*}
 w_s-\Delta w=|y+P|^a w^p, \quad (y,s)\in {\mathbb R}^N\times (-\infty,0).
\end{align*}
with $0\leq w\leq w(0,0)=1$. Using (\ref{z6}) and $p<p_S(a)$, we have
\begin{align*}
 \int_{\tilde{Q}_k}|\partial_sw_k|^2dyds&=\lambda_k^{(4+2a)/(p-1)-N+2}\int_{t_k/2}^{t_k}\int_{|x-x_k|<d_k}|\partial_t u_k|^2dxdt\\
&\leq \lambda_k^{(4+2a)/(p-1)-N+2}\int_{\delta/2}^{\infty}\int_{\Omega}|\partial_t u_k|^2dxdt\\
&\leq \lambda_k^{(4+2a)/(p-1)-N+2}\left[E(u_k(\delta/2))-\lim_{t\to\infty}E(u_k(t))\right] \to 0 \text{ as } k\to \infty.
\end{align*}

Since $\partial_sw_k\to w_s$ in $ {\mathcal D'}({\mathbb R}^N\times (-\infty, 0)$ then $w_s\equiv 0$. Hence $-\Delta w=|y+P|^a w^p$ in ${\mathbb R}^N$ with $w(0,0)=1$. 
 After a spatial shift, we have a contradiction with Liouville-type theorem for Hardy-H\'enon elliptic equation (see \cite{PhS, BVG10}).

(ii) If there exists a subsequence of $k$, still denoted by $k$, such that $M_k|x_k|^{(2+a)/(p-1)}\to \infty$, then we can choose $m_k>1$ such that
$$M_k|x_k|^{(2m_k+a)/(p-1)}=1.$$
Let
\begin{align*}
 w_k(y,s)=\lambda_k^{\frac{2m_k+a}{m_k(p-1)}}u_k(x_k+\lambda_ky, t_k+\lambda_k^2s), \quad (y,s)\in {Q}_k,
\end{align*}
where $\lambda_k=M_k^{-m_k(p-1)/(2m_k+a)}$, then $w_k$ solves the problem
\begin{align*}
 \partial_s w_k-\Delta w_k=|\lambda_k^{-1/m_k}x_k+\lambda_k^{1-1/m_k}y |^a w_k^p, \quad (y,s)\in \tilde{Q}_k:=\{(y,s): |y|<\frac{d_k}{\lambda_k}, -\frac{t_k}{2\lambda_k^2}<s<0\}.
\end{align*}
 Since $\lambda_k^{-1/m_k}|x_k|=1$ and $0<\lambda_k^{1-1/m_k}<1$ , we may assume that $\lambda_k^{-1/m_k}x_k\to P$ with $|P|=1$ and $\lambda_k^{1-1/m_k}\to l\in [0,1]$. A similar limiting procedure as in Case $A_1$  then produces a solution $w$ of 
\begin{align*}
 w_s-\Delta w=|P+ly|^a w^p, \quad (y,s)\in {\mathbb R}^N\times (-\infty,0).
\end{align*}
We will show that $w_s\equiv 0$, indeed,
\begin{align*}
 \int_{\tilde{Q}_k}|\partial_sw_k|^2dyds&=\lambda_k^{(4m_k+2a)/m_k(p-1)-N+2}\int_{t_k/2}^{t_k}\int_{|x-x_k|<d_k}|\partial_t u_k|^2dxdt\\
&\leq \lambda_k^{(4m_k+2a)/m_k(p-1)-N+2}\int_{\delta/2}^{\infty}\int_{\Omega}|\partial_t u_k|^2dxdt\\
&\leq \lambda_k^{(4m_k+2a)/m_k(p-1)-N+2}\left[E(u_k(\delta/2))-\lim_{t\to\infty}E(u_k(t))\right] \to 0.
\end{align*}
since  
\begin{align*}
 (4m_k+2a)/m_k(p-1)-N+2\geq \min \{4/(p-1)-N+2, (4+2a)/(p-1)-N+2\}>0.
\end{align*}
Therefore $w_s\equiv 0$, and we have the contradiction. 
\medskip

{\it B. Radial case. } Assume $\Omega=B_R$, we will write $u_k=u_k(r,t)$, $r\in (0,R)$, $M_k=M_k(r,t)$, where $r= |x|$. Then $u_k$ solves the equation
\begin{align*}
u_t-u_{rr}-\frac{N-1}{r}u_r=r^au^p.
\end{align*}
Let $r_k=|x_k|$, we have 3 subcases:

 {\it Case $B_1$: $r_k\to r_0\in (0,R)$. } 
Let $\lambda_k= M_k^{-(p-1)/2}(r_k, t_k)$, we rescale by
\begin{align*}
 v_k(\rho,s):=\lambda_k^{2/(p-1)}u_k(r_k+\lambda_k\rho, t_k+\lambda_k^2 s), \quad (\rho, s)\in \big(0,\; \frac{R-r_k}{\lambda_k}\big)\times(-t_k/\lambda_k^2, 0)
\end{align*}
Then $v_k$ solves the equation
\begin{align*}
 v_s-v_{\rho\rho}-\frac{N-1}{\rho+r_k/\lambda_k}v_\rho=|r_k+\lambda_k\rho|^av^p, 
\end{align*}
we note that $v_k(0,0)=1$, after extracting a subsequence, we can assume that $v_k\to v$ that satisfies
$$v_s-v_{\rho\rho}=r_0^av^p \quad \text{ in } \quad {\mathbb R}\times (-\infty, 0)$$
and
$$v(0,0)=1.$$
By the argument similar to the case $A_1$, we have $v_s=0$ and a contradiction with Liouville-type theorem for Lane-Emden equation with $N=1$.

{\it Case $B_2$: $r_k\to R$.} We have the following two possibilities:

(i) If $\frac{R-r_k}{\lambda_k}\to \infty$. The same rescaling as in case $B_1$ leads to a contradiction as in  case $B_1$.

(ii) If $\frac{R-r_k}{\lambda_k}\to c$. The same rescaling as in case $B_1$ leads to a contradiction with the Liouville-type theorem in half space for Lane-Emden equation with $N=1$.

{\it Case $B_3$: $r_k\to 0$.  } It follows from  the singularity estimate in Theorem~\ref{th2}(i) and $t_k\geq \delta$ that 
 \begin{align*}
  M_kr_k^{(2+a)/(p-1)}\leq C.
 \end{align*}
Let $\lambda_k=M_k^{-(p-1)/(2+a)}$ then $\lambda_k^{-1}r_k$ is bounded. We may assume that $\lambda_k^{-1}r_k\to P$. Let
\begin{align*}
 w_k(\rho,s)=\lambda_k^{(2+a)/(p-1)}u_k(\lambda_k\rho, t_k+\lambda_k^2s),\quad 0<\rho<R/\lambda_k, \quad -t_k/\lambda_k^2<s<0.
\end{align*}
Then $w_k$ solves
\begin{align*}
 \partial_sw-w_{\rho\rho}-\frac{N-1}{\rho}w_\rho=|\rho|^aw^p.
\end{align*}
After extracting a subsequence, we can assume that $w_k\to w$ in $C^{0,0}_{loc}({\mathbb  R}\times (-\infty, 0))$ and
\begin{align*}
 \partial_sw-w_{\rho\rho}-\frac{N-1}{\rho}w_\rho=| \rho|^aw^p.
\end{align*}
By similar argument as in case $A_3$, we have $w_s=0$. We therefore have a contradiction with Liouville-type theorem for radial solutions of Hardy-H\'enon elliptic equation.  Theorem is proved.
\qed

\medskip
We now turn to prove Theorem~\ref{th4}.

{\it Proof of Theorem~\ref{th4}.} It suffices to prove assertion (i).
 
Suppose that estimate (\ref{universalbound}) is false. Then there exist sequences $T_k\in(0, \infty)$, $u_k$, $y_k\in \Omega$, $s_k\in (0, T_k)$, such that $u_k$ solves problem (\ref{BVP}) (with $T$ replaced by $T_k$) and the functions 
\begin{align*}
M_k:=u_k^{(p-1)/2}
\end{align*}
satisfy
\begin{align*}
M_k(y_k, s_k)>2k(1+d_k^{-1}(s_k)),
\end{align*}
where $d_k:=(\min(t, T_k-t))^{1/2}$.
We apply Doubling Lemma in \cite[Lemma 5.1]{PQS07}, with $X={\mathbb R}^{N+1}$, equipped with parabolic distance (\ref{dist}), $\Sigma=\Sigma_k=\bar{\Omega}\times [0,T_k]$, $D=D_k=\bar{\Omega}\times (0,T_k)$, and $\Gamma=\Gamma_k=\bar{\Omega}\times \{0,T_k\}$. Notice that
$$d_k(t)=d_P((x,t), \Gamma_k), \quad (x,t)\in \Sigma_k.$$
Then there exist $x_k\in\Omega$, $t_k\in(0,T_k)$ such that
\begin{align*}
&M_k(x_k, t_k)> 2k d_k^{-1}(t_k),\\
&M_k(x_k, t_k)\geq M_k(y_k, s_k)>2k,
\end{align*}
and 
\begin{align}\label{z7}
M_k(x,t)\leq 2 M_k(x_k, t_k), \quad (x,t)\in D_k\cap \tilde{B}_k,
\end{align}
where
\begin{align*}
\tilde{B}_k=\{(x,t)\in {\mathbb R}^{N+1}: |x-x_k|+|t-t_k|^{1/2}\leq kM_k^{-1}\}.
\end{align*}
We may assume that $x_k\to x_0\in\bar{\Omega}$. Let $d_k=\text{dist}(x_k,\partial \Omega)$. 

{\it A. The nonradial case.}  We have 3 subcases.

{\it Case $A_1$: $x_0\in\Omega\setminus\{0\}$. }   Let
\begin{align*}
 v_k(y,s)=\lambda_k^{2/(p-1)}u_k(x_k+\lambda_ky, t_k+\lambda_k^2s), \quad (y,s)\in \tilde{D}_k,
\end{align*}
where $\lambda_k=M_k^{-1}$ and 
 $\tilde{D}_k=\left(\lambda_k^{-1}(\Omega-x_k)\cap \{|y|<\frac{k}{2}\}\right)\times \left(-\frac{k^2}{4},\frac{k^2}{4} \right)$.

 We have $v_k(0,0)=1$, and it follows from (\ref{z7}) that $v_k\leq 2^{2/(p-1)}$,  and $v_k$ solves the problem
\begin{align*}
 \partial_sv_k-\Delta v_k&=|x_k+\lambda_k y|^av_k^p, \qquad (y,s)\in \tilde{D}_k.
\end{align*}

Using parabolic $L^ p$-estimates together with standard embedding theorems, we may assume $v_k\to v$ in $C_{loc}^{0,0}({\mathbb R}^N\times {\mathbb R})$, and 
$$v_s-\Delta v= |x_0|^a v^p, \quad \text{in}\; {\mathbb R}^N\times {\mathbb R}.$$
with $v(0,0)=1$. This contradicts Theorem~A(ii).

{\it Case $A_2$: $x_0\in \partial\Omega$. } We rescale $u$ as in case $A_1$. Let
\begin{align*}
 v_k(y,s)=\lambda_k^{2/(p-1)}u_k(x_k+\lambda_ky, t_k+\lambda_k^2s),
\end{align*}
If $d_k/\lambda_k\to \infty$ then we have the same contradition as in  case $A_1$. If $d_k/\lambda_k\to c>0$ then similarly, we have a function $v$ solving the problem
\begin{align*}
\begin{cases}
  v_s-\Delta v=l v^p\quad &\text{in } \; H_c^N\times{\mathbb R},\\
v=0&\text{on } \;  \partial H_c^N\times{\mathbb R},
\end{cases}
\end{align*}
and satisfying $v(0,0)=1$, where $ H_c^N:=\{y\in {\mathbb R}^n: y_1>-c\}$. This contradicts \cite[Theorem 2.19(ii)]{PQS07b}.

{ \it Case $A_3$: $x_0=0$. } We have two possibilities:

(i) If $M_k^{2/(2+a)}|x_k|\leq C$, then letting $\lambda_k=M_k^{-2/(2+a)}$, we have  $\lambda_k^{-1}x_k$ is bounded. We may assume that  $\lambda_k^{-1}x_k\to P$. Let 
\begin{align*}
 w_k(y,s)=\lambda_k^{(2+a)/(p-1)}u_k(x_k+\lambda_ky, t_k+\lambda_k^2s),
\end{align*}
Then $w_k$ solves
\begin{align*}
 \partial_sw_k-\Delta w_k&=|\lambda_k^{-1}x_k+ y|^aw_k^p, \quad (y,s)\in \tilde{D}_k.
\end{align*}
A similar limiting procedure as in Case $A_1$ then produces a solution $w$ of 
\begin{align*}
 w_s-\Delta w=|y+P|^a w^p, \quad (y,s)\in {\mathbb R}^N\times {\mathbb R}.
\end{align*}
 After a spatial shift, we have a contradiction with Theorem~\ref{th1a}(i).

(ii) If there exists a subsequence of $k$, still denoted by $k$, such that $M_k^{2/(2+a)}|x_k|\to \infty$. We can choose $m_k>1$ such that
$$M_k^{2/(2m_k+a)}|x_k|=1.$$
Let
\begin{align*}
 w_k(y,s)=\lambda_k^{\frac{2m_k+a}{m_k(p-1)}}u_k(x_k+\lambda_ky, t_k+\lambda_k^2s), \quad (y,s)\in \tilde{D}_k,
\end{align*}
where $\lambda_k=M_k^{-2m_k/(2m_k+a)}$, then  $w_k$ solves the problem
\begin{align*}
 \partial_s w_k-\Delta w_k=|\lambda_k^{-1/m_k}x_k+\lambda_k^{1-1/m_k} |^a w_k^p, \quad (y,s)\in \tilde{D}_k.
\end{align*}
 Since $\lambda_k^{-1/m_k}|x_k|=1$ and $0<\lambda_k^{1-1/m_k}<1$ then we may assume that $\lambda_k^{-1/m_k}x_k\to P$ with $|P|=1$ and $\lambda_k^{1-1/m_k}\to l\in [0,1]$. A similar limiting procedure as in case $A_1$ then produces a solution $w$ of 
\begin{align*}
 w_s-\Delta w=|P+ly|^a w^p, \quad (y,s)\in {\mathbb R}^N\times {\mathbb R}.
\end{align*}
We have a contradition with Theorem~\ref{th1a}(i) if $l\ne 0$, or with Theorem A(ii) if $l=0$.

{\it B. The radial case. }
Assume $\Omega=B_R$, we will write $u_k=u_k(r,t)$, $r\in (0,R)$, $M_k=M_k(r,t)$, where $r=|x|$. Then $u_k$ solves the equation
\begin{align*}
u_t-u_{rr}-\frac{N-1}{r}u_r=r^au^p.
\end{align*}
Denote $r_k=|x_k|$, we have 3 cases:

 {\it Case $B_1$: $r_k\to r_0\in (0,R)$. } 
Let $\lambda_k= M_k^{-1}(r_k, t_k)$, we rescale by
\begin{align*}
 v_k(\rho,s):=\lambda_k^{2/(p-1)}u_k(r_k+\lambda_k\rho, t_k+\lambda_k^2 s), \quad \rho<\min\{\frac{k}{2}, \frac{r_0}{\lambda_k}, \frac{R-r_0}{\lambda_k}\},\; |s|< \frac{k^2}{4}.
\end{align*}
Then $v_k$ solves the equation
\begin{align*}
 v_s-v_{\rho\rho}-\frac{N-1}{\rho+r_k/\lambda_k}v_\rho=|r_k+\lambda_k\rho|^av^p, 
\end{align*}
we note that $v_k(0,0)=1$, after extracting a subsequence, we can assume that $v_k\to v$ that satisfies
$$v_s-v_{\rho\rho}=r_0^av^p \quad \text{ in } \quad {\mathbb R}\times {\mathbb R}$$
and
$$v(0,0)=1.$$
This contradicts Theorem A(ii) for $N=1$.

{\it Case $B_2$: $r_k\to R$.} We have the following two possibilities:

(i)  $\frac{R-r_k}{\lambda_k}\to \infty$. The same rescaling as in case $B_1$ lead to a contradiction with Theorem A(ii) for $N=1$.

(ii) $\frac{R-r_k}{\lambda_k}\to c$ The same rescaling as in case $B_1$ lead to a contradiction with the Liouville-type theorem in half space with $N=1$  (see \cite[Theorem 2.19]{PQS07b}).

{\it Case $B_3$: $r_k\to 0$.  } We have the following two possibilities:

(i)  $M^{2/(2+a)}_k(r_k, t_k)r_k\leq C$.  
Let $\lambda_k=M_k^{-2/(2+a)}(r_k, t_k)$ then $\lambda_k^{-1}r_k$ is bounded. We may assume that $\lambda_k^{-1}r_k\to P$. Let
\begin{align*}
 w_k(\rho,s)=\lambda_k^{(2+a)/(p-1)}u_k(r_k+\lambda_k\rho, t_k+\lambda_k^2s), \quad \rho<\min\{\frac{k}{2}, \frac{R}{2\lambda_k}\},\; |s|< \frac{k^2}{4}.
\end{align*}
Then $w_k$ solves
\begin{align*}
 \partial_sw-w_{\rho\rho}-\frac{N-1}{\rho}w_\rho=|\rho|^aw^p.
\end{align*}
After extracting a subsequence, we can assume that $w_k\to w$ in $C^{0,0}_{loc}({\mathbb R}\times {\mathbb R})$ and
\begin{align*}
 \partial_sw-w_{\rho\rho}-\frac{N-1}{\rho}w_\rho=| \rho|^aw^p.
\end{align*}
So we have a contradiction with Theorem \ref{th1a}(ii)

(ii) There exists a subsequence of $k$, still denoted by $k$ such that $M^{2/(2+a)}_k(r_k, t_k)r_k \to \infty$. We can choose $m_k>1$ such that
$$M_k^{2/(2m_k+a)}(r_k, t_k)r_k=1.$$
Let $\lambda_k=M_k^{-2m_k/(2m_k+a)}(r_k, t_k)$ then $\lambda_k^{-1/m_k}r_k=1$ and $0<\lambda_k^{1-1/m_k}<1$, we may assume that $\lambda_k^{1-1/m_k}\to l\in [0,1]$.

If $l=0$ then we rescale
\begin{align*}
 w_k(\rho,s)=\lambda_k^{\frac{2m_k+a}{m_k(p-1)}}u_k(r_k+\lambda_k\rho, t_k+\lambda_k^2s), \quad \rho<\min\{\frac{k}{2}, \frac{R}{2\lambda_k}\},\; |s|< \frac{k^2}{4}.
\end{align*}
It follows that $w_k$ solves the problem
\begin{align*}
 \partial_s w-w_{\rho\rho}-\frac{N-1}{\rho+\lambda_k^{-1}r_k}w_\rho=|\lambda_k^{-1/m_k}r_k+\lambda_k^{1-1/m_k} \rho|^a w_k^p.
\end{align*}
 A similar limiting procedure as in Case $A_1$ then produces a solution $w$ of 
\begin{align*}
 w_s-w_{\rho\rho}= w^p, \quad (\rho,s)\in {\mathbb R}\times {\mathbb R}, 
\end{align*}
with $w(0,0)=1$, and we have a contradiction with Theorem A(ii) for $N=1$.

If $l\ne 0$ then we rescale
\begin{align*}
 w_k(\rho,s)=\lambda_k^{\frac{2m_k+a}{m_k(p-1)}}u_k(\lambda_k\rho, t_k+\lambda_k^2s), \quad \rho<\min\{\frac{k}{2}, \frac{R}{\lambda_k}\},\; |s|< \frac{k^2}{4},
\end{align*}
It follows that $w_k$ solves the problem
\begin{align*}
 \partial_s w-w_{\rho\rho}-\frac{N-1}{\rho}{w}_\rho=|\lambda_k^{1-1/m_k} \rho|^a w^p.
\end{align*}
Passing to the limit, we obtain $w$ solutions to 
\begin{align*}
 w_s-w_{\rho\rho}-\frac{N-1}{\rho}w_\rho=l^a|\rho|^a w^p, \quad (\rho,s)\in {\mathbb R}\times {\mathbb R}.
\end{align*}
with $w(0,0)=1$. This contradicts Theorem~\ref{th1a}(i). 
\qed

\medskip
We now give proof of Theorem \ref{th6}. We first need the following lemma, which is proved by the same argument as in \cite[Lemma 4.1]{QSW04}.
\begin{lemma}\label{lem5}
 Let $a>0$, $p>1+\frac{a}{N}$ and $\varepsilon\in (0,(p+1)/2)$. Then there exists $a_{\varepsilon}\in (0, a)$ such that, for any nonnegative solution  $u$   of (\ref{BVP}) and $t\in (0,T/2)$, it holds
\begin{align*}
\int_{0}^{t}\int_{\Omega}|x|^{a_{\varepsilon}}u^{\frac{p+1}{2}-\varepsilon}dxdt\leq C(p,a,\Omega,\varepsilon)(1+t)\left(1+T^{-1/(p-1)}\right).
\end{align*}
\end{lemma}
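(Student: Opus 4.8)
The plan is to establish an energy-type differential inequality for the integral $F(t) := \int_\Omega |x|^{a} u^{q+1}(t)\, dx$ with a suitable exponent, and then use an integrability gain coming from the weight $|x|^a$ with $a>0$ to trade a small amount of growth in the exponent for a small amount of decay in the power of the weight. Concretely, I would multiply the equation by an appropriate power of $u$ (or test against $u^{\beta}$ for a carefully chosen $\beta<1$) and integrate over $\Omega$. Writing $q=\frac{p-1}{2}-\eps$ and experimenting with the power, the goal is to produce, after integration by parts and discarding the favorable gradient term, an inequality of the form
\begin{align*}
\frac{d}{dt}\int_\Omega |x|^{a_\eps} u^{r}\, dx + c \int_\Omega |x|^{a_\eps} u^{\frac{p+1}{2}-\eps}\, dx \leq C\int_\Omega |x|^{a} u^{p}\, dx + (\text{lower order}),
\end{align*}
where the reduced weight exponent $a_\eps\in(0,a)$ arises from a Hardy- or Caffarelli--Kohn--Nirenberg-type interpolation that converts the original weight $|x|^a$ into $|x|^{a_\eps}$ at the cost of a lower power of $u$. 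The condition $p>1+\frac{a}{N}$ is exactly what makes the weighted integral $\int_\Omega |x|^{-b}$ finite for the range of $b$ needed here, so this is where that hypothesis enters.

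First I would integrate the resulting differential inequality in time over $(0,t)$ with $t\in(0,T/2)$. The boundary term at time $t$ is controlled by the $H^1_0\cap L^\infty$ regularity of the solution, and the term at time $0$ contributes the initial energy; both are absorbed into the constant. The right-hand side then involves $\int_0^t\int_\Omega |x|^a u^p$, which I would bound using the universal singularity/blow-up estimate. Here the key input is Theorem~\ref{th4}(i): for $0<t<T$ one has $u(x,t)\le C(1+t^{-1/(p-1)}+(T-t)^{-1/(p-1)})$, and restricting to $t\in(0,T/2)$ kills the $(T-t)^{-1/(p-1)}$ factor up to a constant. Integrating $t^{-p/(p-1)}$ type singularities over $(0,t)$ and keeping track of the weight produces precisely the factor $(1+t)(1+T^{-1/(p-1)})$ on the right-hand side.

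The main obstacle I expect is the borderline integrability near the origin created by the weight $|x|^a$: one must choose the reduced exponent $a_\eps$ and the auxiliary power of $u$ so that all weighted spatial integrals converge (both near $x=0$ and, via the homogeneous Dirichlet condition, near $\partial\Omega$) while still producing a strictly positive coefficient in front of the good term $\int_\Omega |x|^{a_\eps} u^{\frac{p+1}{2}-\eps}$. This is a delicate bookkeeping of exponents, and it is the step where the hypotheses $a>0$, $\eps\in(0,(p+1)/2)$, and $p>1+\frac{a}{N}$ all conspire; the existence of an admissible $a_\eps\in(0,a)$ is the content that must be verified, after which everything reduces to the time integration above.

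Since the lemma is asserted to follow \emph{by the same argument as in} \cite[Lemma 4.1]{QSW04}, I would expect the actual write-up to invoke that reference and only indicate the modifications needed to accommodate the weight $|x|^a$ (namely the introduction of $a_\eps$ and the use of the weighted a priori bound from Theorem~\ref{th4} in place of the unweighted one). Thus the real novelty lies entirely in the weighted interpolation step, and the time-integration and a priori bound steps are essentially transcriptions of the $a=0$ case.
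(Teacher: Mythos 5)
There is a genuine gap here: neither of the two pillars your argument rests on can carry the weight, and the actual mechanism of the proof is missing. The paper's proof (it is a one-paragraph adaptation of \cite[Lemma 4.1]{QSW04}) is Kaplan's eigenfunction argument: one tests the equation with the first Dirichlet eigenfunction $\varphi_1$, so that $y(t):=\int_\Omega u(t)\varphi_1\,dx$ satisfies $y'(t)=-\lambda_1 y(t)+\int_\Omega|x|^a u^p\varphi_1\,dx$, and the hypothesis $p>1+\frac{a}{N}$ enters exactly once, through the weighted H\"older inequality $\int_\Omega|x|^a u^p\varphi_1\,dx\geq C\bigl(\int_\Omega u\varphi_1\,dx\bigr)^p$, which holds because $\int_\Omega|x|^{-a/(p-1)}\,dx<\infty$ precisely when $a/(p-1)<N$. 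The resulting ODE inequality $y'\geq-\lambda_1 y+Cy^p$ forces $y(t)\leq C\bigl(1+(T-t)^{-1/(p-1)}\bigr)$ (otherwise $y$ blows up before $T$), and the space-time bound then comes from the \emph{integrated identity} $\int_0^t\int_\Omega|x|^a u^p\varphi_1\,dx\,ds=y(t)-y(0)+\lambda_1\int_0^t y\,ds\leq C(1+t)\bigl(1+T^{-1/(p-1)}\bigr)$ for $t<T/2$, which is linear in $y$; afterwards H\"older-type interpolation lowers the power from $p$ to $\frac{p+1}{2}-\varepsilon$ and trades the weight $|x|^a\varphi_1$ for $|x|^{a_\varepsilon}$, which is where $a_\varepsilon<a$ is lost. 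At no point is a pointwise bound on $u$ raised to the power $p$ and integrated in time.

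Your proposal replaces this with (a) a differential inequality obtained by testing with $u^\beta$, and (b) a bound on $\int_0^t\int_\Omega|x|^a u^p$ via Theorem~\ref{th4}(i); both steps fail. For (a): multiplying by $u^\beta$ and integrating gives $\frac{1}{\beta+1}\frac{d}{dt}\int_\Omega u^{\beta+1}\,dx+\beta\int_\Omega u^{\beta-1}|\nabla u|^2\,dx=\int_\Omega|x|^a u^{p+\beta}\,dx$, in which the nonlinear term sits on the side that yields \emph{lower} bounds for space-time integrals of powers of $u$; no choice of $\beta$ or weight makes your claimed good term $c\int_\Omega|x|^{a_\varepsilon}u^{(p+1)/2-\varepsilon}\,dx$ appear on the left. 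The device that does produce an upper bound is the eigenfunction $\varphi_1$, which converts the Laplacian into the harmless term $-\lambda_1 y$; this idea is absent from your write-up. For (b): Theorem~\ref{th4} requires $p<\min\{p_B,p_S(a)\}$ (or radial symmetry), hypotheses which Lemma~\ref{lem5} does not assume; worse, Lemma~\ref{lem5} is invoked in the proof of Theorem~\ref{th6} precisely in the regime $p_B\leq p<\frac{N+2+a}{N-2+a}$, where Theorem~\ref{th4} is unavailable --- covering that regime by a non-Liouville argument is the whole point of the lemma, so your proof would be vacuous where it is needed. Finally, even if Theorem~\ref{th4}(i) were usable, your time integration diverges: $u^p\lesssim\bigl(1+s^{-1/(p-1)}\bigr)^p$ leads to $\int_0^t s^{-p/(p-1)}\,ds=\infty$ since $p/(p-1)>1$, so the factor $(1+t)\bigl(1+T^{-1/(p-1)}\bigr)$ can never be produced this way; avoiding exactly this divergence is why the argument must go through the identity for $y$ rather than through pointwise bounds.
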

\begin{proof}
The proof is nearly the same as in  \cite[Lemma 4.1]{QSW04}. Only one thing we take into consideration is the condition $p>1+\frac{a}{N}$, which implies the H\"older 's inequality
$$\int_{\Omega}|x|^au^p(t)\varphi_1dx\geq C\bigg(\int_{\Omega}u(t)\varphi_1dx\bigg)^p.$$
\end{proof}

{\it Proof of Theorem \ref{th6}. }
If $p<p_B$ then the estimate (\ref{universal}) is a consequence of Theorem~\ref{th4}(ii). We may assume that $p_B\leq p< \frac{N+2+a}{N-2+a}$ (this in particular implies $a<\frac{N+2}{4N-1}$ and $N\geq 2$). We shall follow the steps similar to those in \cite{QSW04}. In order not to repeat the same things, we only precise the modifications and the differences coming from the weight term $|x|^a$.

By Theorem \ref{th5}, we know that global solutions of problem (\ref{BVP}) satisfy the a priori estimate
\begin{align*}
 \|u(t)\|_\infty \leq C(\Omega, p, a, \|u(t_0)\|_\infty),\quad  t\geq t_0\geq 0,
\end{align*}
where $C$ remains bounded for $\|u(t_0)\|_\infty$ bounded. Therefore, it is sufficient to show the existence of $C(\Omega, p, a, \tau)>0$ such that any global solution $u$ of problem (\ref{BVP}) satisfies
\begin{align}
 \inf_{t\in (0,\tau)}\|u(t)\|_\infty \leq C(\Omega, p, a, \tau).
\end{align}
Using the boundedness of $|x|^a$ in $\Omega$, by the well-known estimate (see \cite{Wei80}), we have
\begin{align}\label{az1}
 \|u(t)\|_\infty\leq C\|u(t_0)\|_q (t-t_0)^{-N/(2q)},\quad 0\leq t-t_0\leq T(\|u(t_0)\|_q)
\end{align}
where $r\in [q, \infty]$ and $q>q_c:=N(p-1)/2$.
We note that $p\geq p_B>1+\frac{2a}{N}$, hence $q_c>1$.
Since $p<\frac{N+2+a}{N-2+a}$, we can choose $q\in (q_c, p+1)$ such that $aq/(p+1-q)<N$. It follows from (\ref{az1}) and H\"older inequality that 
\begin{align*}
 \|u(t)\|_\infty\leq C\|u(t_0)\|_q (t-t_0)^{-N/(2q)}\leq C\| |x|^{a/(p+1)}u(t_0)\|_{p+1} (t-t_0)^{-N/(2q)}.
\end{align*}
Therefore, (\ref{universal}) will follow if we can show that
\begin{align*}
 \inf_{t\in (0,\tau)} \||x|^{a/(p+1)}u(t)\|_{p+1}\leq C(\Omega, p,a,\tau).
\end{align*}
We argue by contradition, assume that for each $k$, there exists a global solution $u_k\geq 0$ such that 
\begin{align}\label{z10}
 \int_{\Omega}|x|^au^{p+1}_k(t)dx >k, \quad \forall t\in (0,\tau/2).
\end{align}
Denote 
\begin{align*}
E_k(t)=E(u_k(t))=\frac{1}{2}\int_{\Omega}|\nabla u_k(t)|^2dx-\frac{1}{p+1}\int_{\Omega}|x|^au_k^{p+1}(t)dx.
\end{align*}
Then $E_k'(t)=-\|\partial_t u_k(t)\|_2^2\leq 0$ and $u_k$ satisfies the identity
\begin{align}\label{z12a}
\frac{1}{2}\frac{d}{dt}\int_{\Omega}u_k^2(t)dx&=-\int_{\Omega}\|\nabla u_k\|^2(t)dx+\int_{\Omega}|x|^au_k^{p+1}(t)dx \\
&=-2E_k(t)+\frac{p-1}{p+1}\int_{\Omega}|x|^au_k^{p+1}(t)dx.\label{z12}
\end{align}
{\it Step 1.} We claim that 
\begin{align}\label{z11}
 E_k(\tau/4)\geq k^{1/2}
\end{align}
for all $k\geq k_0(\Omega,p,a)$ large enough.

Assume that (\ref{z11}) is false. Since $p>1+\frac{2a}{N}$, we have
\begin{align}\label{bs1}
 \|u_k\|_{2}\leq C \| |x|^{a/p+1}u_k\|_{p+1}.
\end{align}
 Using (\ref{z12}) and (\ref{bs1}),  for all $t\geq  \tau/4$, we have
\begin{align}\label{z13}
\frac{1}{2}\frac{d}{dt}\int_{\Omega}u_k^2(t)dx\geq -2k^{1/2}+\frac{p-1}{p+1}\int_{\Omega}|x|^au_k^{p+1}(t)dx\geq -2k^{1/2}+C\left(\int_{\Omega}u_k^2(t)dx\right)^{(p+1)/2}.
\end{align}
This implies 
\begin{align}\label{z14}
\int_{\Omega}u_k^2(t)dx\leq Ck^{\frac{1}{p+1}}, \quad t\geq \tau/4.
\end{align}
Combining (\ref{z10}) with (\ref{z13}), we deduce 
\begin{align*}
\frac{1}{4}k\tau\leq \int_{\tau/4}^{\tau/2}\int_{\Omega}|x|^au_k^{p+1}(t)dxdt\leq C(k^{\frac{1}{p+1}}+k^{1/2}\tau),
\end{align*}
which gives a contradiction for $k$ large enough.

{\it Step 2.} Let $\alpha >0$ to be fixed later and $F_k=\{t\in(0,\tau/4] : -E_k'(t)\geq E_k^{1+1/\alpha}(t)\}$. By the same argument as in \cite{QSW04} we have,  $|F_k|<\tau/8$ for all $k\geq k_0$ large enough.

{\it Step 3.} Choose $\alpha \geq (p+1)/(p-1)$. We claim that for all $k\geq k_0$ large,
\begin{align}\label{z15}
\|\partial_tu_k(t)\|_2^2\leq C\left(\int_{\Omega}|x|^au_k^{p+1}(t)dx\right)^{(\alpha+1)/ \alpha}, \quad \text{ for } t\in (0,\tau/4]\setminus F_k.
\end{align}
For all $t\in (0,\tau/4]\setminus F_k$, we have
\begin{align}\label{z15b}
 \|\partial_tu_k(t)\|_2^2=-E'_k\leq E_k^{1+1/\alpha}(t)\leq \|\nabla u_k(t)\|_2^{2(1+1/\alpha)}.
\end{align}
This along with (\ref{z12a}) and (\ref{bs1}) implies
\begin{align*}
 \|\nabla u_k(t)\|_2^2&\leq \int_{\Omega}|x|^au_k^{p+1}(t)dx + \|u_k(t)\|_2\|\partial_t u_k(t)\|_2\\
&\leq \int_{\Omega}|x|^au_k^{p+1}(t)dx + \| |x|^{a/p+1}u_k(t)\|_{p+1}\|\nabla u_k(t)\|_2^{1+1/\alpha}\\
&\leq \int_{\Omega}|x|^au_k^{p+1}(t)dx+ C\| |x|^{a/p+1}u_k(t)\|_{p+1}^{2\alpha/(\alpha-1}+\frac12 \|\nabla u_k(t)\|_2^2\\
&\leq C\int_{\Omega}|x|^au_k^{p+1}(t)dx+ \frac12 \|\nabla u_k(t)\|_2^2
\end{align*}
where we have used $\alpha\geq (p+1)/(p-1)$ and (\ref{z10}). Consequently,
$$\|\nabla u_k(t)\|_2^2\leq C\int_{\Omega}|x|^au_k^{p+1}(t)dx.$$
Combing this with (\ref{z15b}), we have (\ref{z15}).

{\it Step 4.} Let $0<q<(p+1)/2$, $b=(p+1-q)(\alpha+1)/\alpha$ and
$$G_k=\{t\in (0,\tau/4] : \|\partial_t u_k(t)\|_2^2\leq C\|u_k\|_\infty^b\}.$$
We claim that $|G_k|>0$.

Due to $a>a_{\varepsilon}$ and Lemma~\ref{lem5}, for $A=A(p,q,a,\Omega, \tau)$ large enough, the set
\begin{align*}
 \tilde{G}_k:=\{t\in (0,\tau/4]: \int_{\Omega}|x|^{a}u_k^q\geq A\}
\end{align*}
satisfies
\begin{align}\label{z20}
 |\tilde{G}_k|<\tau/8.
\end{align}
It follows from (\ref{z10}) that, for all $t\in(0,\tau/4]\setminus \tilde{G}_k$,
\begin{align*}
 \int_{\Omega}|x|^au_k^{p+1}(t)dx\leq C\|u_k(t)\|_\infty^{p+1-q}\int_{\Omega}|x|^au_k^{q}(t)dx\leq C\|u_k(t)\|_\infty^{p+1-q}. 
\end{align*}
Therefore, $G_k\supset (0,\tau/4]\setminus (F_k\cup\tilde{G}_k)$. The claim follows from Step 2 and (\ref{z20}).

{\it Step 5.} Construction of a sequence of rescaling times.

If $N\leq 3$, for each $k$ large, we just pick any $t_k\in G_k$.

If $N>3$, we follows the argument in Step 5 of \cite{QSW04}, (since $|x|^a$ is bounded in $\Omega$, Lemma~5.2 and 5.3 in \cite{QSW04} still hold),  and there exists $t_k\in (0, \tau)$ such that 
\begin{align}\label{z16}
\|\partial_t u_k(t_k)\|_r\leq \|u_k(t_k)\|_\infty^{\frac{b}{2}+\beta(p-1)}, \quad 2\leq r\leq \infty.
\end{align}
where $\beta=\frac{N}{2}(\frac12-\frac1r)$. (Note that when $N\leq 3$, (\ref{z16}) holds for  $r=2$, $\beta=0$.)

{\it Step 6.} We will now obtain a contradiction by using rescaling argument. By (\ref{z10}), we have $M_k=\|u_k(t_k)\|_\infty \to \infty$. Let $x_k\in \bar{\Omega}$ be such that $M_k=u_k(x_k, t_k)$. We may assume that $x_k\to x_0\in\bar{\Omega}$. Similar to the proof of Theorem~\ref{th4}, we have the following three cases.

{\it Case 1: $x_0\in\Omega\setminus \{0\}$.} Let $\nu_k=M_k^{-(p-1)/2}$ and 
\begin{align}\label{z20a}
&w_k(y)=M_k^{-1}u_k(x_k+\nu_k y, t_k),\\
&\tilde{w}_k(y)=M_k^{-p}\partial_t u_k(x_k+\nu_k y, t_k).\label{z20b}
\end{align}
Then we have
\begin{align}\label{z17}
\begin{cases}
\Delta w_k+|x_k+\nu_k y|^aw_k^p=\tilde{w}_k&\text{ in } \Omega_k\\
w_k=0 &\text{ on } \partial \Omega_k
\end{cases}
\end{align}
where $\Omega_k=\nu_k^{-1}(\Omega-x_k)$. Moreover, $0\leq w_k\leq 1=w_k(0)$. Now passing to the limit we obtain a contradiction with elliptic Liouville-type theorem for Lane-Emden equation \cite{GS81b}. We only have to show that the functions $w_k$ are locallly uniformly H\"older continuous and $\tilde{w}_k\to 0$ in $L^r_{loc}({\mathbb R}^N)$ with some $r>N/2$.

Let $R>0$. Using (\ref{z16}) we obtain for $k$ large enough,
\begin{align}\notag
\left(\int_{B_R} | \tilde{w}_k(y)|^rdy\right)^{1/r} &=M_k^{-p}\left(\int_{B_R} | \partial_t u_k(x_k+\nu_k y, t_k)|^rdy\right)^{1/r}\\
&=M_k^{-p}\nu_k^{-N/r}\left(\int_{\Omega} | \partial_t u_k(x, t_k)|^rdx\right)^{1/r}\notag\\
&\leq M_k^{-p}M_k^{N(p-1)/(2r)}M_k^{\frac{b}{2}+\beta(p-1)}=CM_k^{\gamma_1}.\label{az2}
\end{align}
where $\gamma_1=-p+\frac{N(p-1)}{4}+\frac{\alpha+1}{2\alpha}(p+1-q)$ and $r\in[2,\infty)$.

By taking $q$ close to $(p+1)/2$ and $\alpha$ sufficiently large, $\gamma_1$ will be negative provided $(N-3)p<N-1$, which is always true since $N\leq 4$ and $p\leq \frac{N+2+a}{N-2+a}$.

{\it Case 2: $x_0\in\partial \Omega$.} By the same argument as in the Case 1, we have the contradiction with Liouville-type theorem for Lane-Emden equation if $d(x_k,\partial \Omega)\nu_k^{-1}\to \infty$, or with  Liouville-type theorem for Lane-Emden equation in half-space if $d(x_k,\partial \Omega)\nu_k^{-1}$ is bounded.

{\it Case 3: $x_0=0$.} We have the following two possibilities:

(i) If $M_k^{(p-1)/(2+a)}|x_k|\leq C$, let $\nu_k=M_k^{-(p-1)/(2+a)}$ then $\nu_k^{-1}x_k$ is bounded. We may assume that $\nu_k^{-1}x_k\to P$. Let $w_k, \tilde{w}_k$ defined as in (\ref{z20a}) and (\ref{z20b}), by the same procedures, it is sufficient to show that functions $w_k$ are locallly uniformly H\"older continuous and $\tilde{w}_k\to 0$ in $L^r_{loc}({\mathbb R}^N)$ with some $r>N/2$.

Similarly as in (\ref{az2}), 
\begin{align*}
\left(\int_{B_R} | \tilde{w}_k(y)|^rdy\right)^{1/r} \leq M_k^{-p}M_k^{N(p-1)/(2r+ ar)}M_k^{\frac{b}{2}+\beta(p-1)}\leq CM_k^{\gamma_2}.
\end{align*}
where $r\in [2,\infty)$ and  $\gamma_2=-p+\frac{N(p-1)}{4}+\frac{\alpha+1}{2\alpha}(p+1-q)+\frac{N(p-1)}{r}(\frac{1}{2+a}-\frac{1}{2})\leq \gamma_1$. By taking $q$ close to $(p+1)/2$ and $\alpha$ sufficiently large, $\gamma_2$ will be negative provided $(N-3)p<N-1$.

(ii) If there exists a subsequence of $k$, still denoted by $k$, such that $M_k^{(p-1)/(2+a)}|x_k|\to \infty$. We can choose $m_k>1$ such that
$$M_k^{(p-1)/(2m_k+a)}|x_k|=1.$$
Let $\nu_k= M_k^{-m_k(p-1)/(2m_k+a)}$,  we may assume that $\nu_k^{-1/m_k}x_k\to P$ with $|P|=1$,  and $\nu_k^{1-1/m_k}\to l\in [0,1]$. Let $w_k, \tilde{w}_k$ defined as in (\ref{z20a}) and (\ref{z20b}), by the same procedures, it is sufficient to show that functions $w_k$ are locallly uniformly H\"older continuous and $\tilde{w}_k\to 0$ in $L^r_{loc}({\mathbb R}^N)$ with some $r>N/2$.

Similarly as in (\ref{az2}),
\begin{align*}
\left(\int_{B_R} | \tilde{w}_k(y)|^rdy\right)^{1/r} \leq M_k^{-p}M_k^{\frac{N}{r}\frac{m_k(p-1)}{2m_k+a})}M_k^{\frac{b}{2}+\beta(p-1)}\leq CM_k^{\gamma_3}.
\end{align*}
where $r\in [2,\infty)$ and  $\gamma_3=-p+\frac{N(p-1)}{4}+\frac{\alpha+1}{2\alpha}(p+1-q)+\frac{N(p-1)}{r}(\frac{m_k}{2m_k+a}-\frac{1}{2})\leq \gamma_1$. By taking $q$ close to $(p+1)/2$ and $\alpha$ sufficiently large, $\gamma_3$ will be negative provided $(N-3)p<N-1$.
Theorem is proved. 
\qed

{\bf Acknowledgement.} The author would like to thank Professor Philippe Souplet for valuable suggestions and comments.

\begin{appendix}
\section{}
\begin{lemma}\label{lemappe1}
 Assume that $0\in \Omega$, $a>-2$, $N\geq 2$ and $u$ is  solution of (\ref{1}) in $\Omega\times(0,T)$ in the sense of (\ref{solclass}). Then 
 $u$ is distributional solution in the sense 
\begin{align}
 -\int_{0}^{T}\int_{\Omega}(u(\varphi_t+\Delta\varphi)dxdt=\int_{0}^{T}\int_{\Omega}|x|^au^p\varphi dxdt 
\end{align}
for all $\varphi\in C^{\infty}_{0}(\Omega\times (0,T))$.
 \end{lemma}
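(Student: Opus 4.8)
The plan is to treat separately the regular case $a\ge 0$ and the genuinely singular case $a<0$. When $a\ge 0$ the solution lies in $C^{2,1}(\Omega\times I)$ and the coefficient $|x|^a$ is continuous, so the identity follows at once from Green's second identity in $x$ together with integration by parts in $t$ (the boundary terms in $t$ drop out because $\varphi$ is compactly supported in $(0,T)$). The whole difficulty is therefore concentrated at the origin when $a<0$. First I would record two elementary facts that hold throughout: since $u\in C^{0,0}(\Omega\times I)$ it is locally bounded, in particular near $0$; and since $a>-2\ge -N$ for $N\ge 2$, the function $|x|^a$ is locally integrable, whence $|x|^a|u|^{p-1}u\in L^1_{loc}$ and the right-hand side of the claimed identity is a convergent integral.

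The core of the argument, and the step I expect to be the main obstacle, is a quantitative gradient bound near the origin. For $x_0$ with $|x_0|=\rho$ small and $t_0$ in a fixed compact subset $K$ of $(0,T)$, I would rescale by setting $w(y,s):=u(x_0+\rho y,t_0+\rho^2 s)$ on the unit cylinder $Q_1=B_1\times(-1,0]$. Then $w$ solves $w_s-\Delta w=\rho^2|x_0+\rho y|^a|w|^{p-1}w=:g$, and since $|x_0+\rho y|\ge \rho/2$ on $Q_1$ and $u$ is bounded, one has $\|g\|_{L^\infty(Q_1)}\le C\rho^{2+a}\to 0$ because $a>-2$. Applying interior parabolic $L^q$ estimates (with $q$ large) followed by the embedding $W^{2,1}_q\hookrightarrow C^{1,\gamma}$ to $w-u(0,t_0)$, and using that $u$ is (uniformly) continuous at the origin so that $\mathrm{osc}_{Q_1}w\to 0$ as $\rho\to 0$, I would obtain $\|\nabla w\|_{L^\infty(Q_{1/2})}\to 0$. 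Rescaling back gives $\rho\,|\nabla u(x_0,t_0)|\to 0$, that is $|\nabla u(x,t)|=o(|x|^{-1})$ as $x\to 0$, uniformly for $t\in K$.

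With this bound in hand the conclusion is routine. For fixed $\varphi\in C^\infty_0(\Omega\times(0,T))$ I would write $\int_0^T\!\int_\Omega=\lim_{\epsilon\to 0}\int_0^T\!\int_{\Omega\setminus B_\epsilon}$ and apply Green's identity together with integration by parts in $t$ on $\Omega\setminus B_\epsilon$, where $u$ is classical. This produces the desired bulk terms plus a boundary contribution over $\partial B_\epsilon$ of the form $\int_0^T\!\int_{\partial B_\epsilon}(\varphi\,\partial_r u-u\,\partial_r\varphi)\,dS\,dt$. The term with $\partial_r\varphi$ is bounded by $C\epsilon^{N-1}\to 0$ since $u$ is bounded, while the term with $\partial_r u$ is bounded by $C\epsilon^{N-1}\sup_{\partial B_\epsilon}|\nabla u|=o(\epsilon^{N-2})$, which tends to $0$ for every $N\ge 2$. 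Finally the bulk integrals converge by dominated convergence, using $u(\varphi_t+\Delta\varphi)\in L^1$ and $|x|^a|u|^{p-1}u\,\varphi\in L^1$. Letting $\epsilon\to 0$ yields the distributional identity. I expect the delicate point to be precisely the boundary term in the borderline dimension $N=2$: the crude interior bound $|\nabla u|\le C|x|^{-1}$ only gives $O(\epsilon^{N-2})=O(1)$ there, so the refinement to $o(|x|^{-1})$ coming from the continuity of $u$ at the origin is essential.
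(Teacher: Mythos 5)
Your proposal is correct in substance, but it takes a genuinely different route from the paper's. The paper treats the singular case $a<0$ by an energy argument: it multiplies the equation by $u$ and integrates over annuli $B_R\setminus B_{\varepsilon}$ in space and $(\tau,T-\tau)$ in time; the flux at $|x|=\varepsilon$ equals $\varepsilon^{N-1}f'(\varepsilon)$, where $f(r)$ is the time-integrated surface average of $u^2$, and since $f$ is continuous up to $r=0$ there is a sequence $\varepsilon_i\to 0$ with $\varepsilon_i f'(\varepsilon_i)\to 0$. This yields $\nabla u\in L^2$ up to the origin on compact time intervals; a second pigeonhole argument then produces radii $\rho_i\to 0$ with $\rho_i\int\int_{|x|=\rho_i}|\nabla u|^2\,d\sigma\,dt\to 0$, hence by Cauchy--Schwarz $\int\int_{|x|=\rho_i}|\nabla u|\,d\sigma\,dt\to 0$, and the Green's-formula boundary terms are killed \emph{along this particular sequence of radii}. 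You instead prove the pointwise estimate $|\nabla u(x,t)|=o(|x|^{-1})$ uniformly on compact time sets, via parabolic rescaling, interior $L^q$ estimates and embedding, exploiting the continuity of $u$ at the origin to make the oscillation of the rescaled solution vanish; this kills the boundary terms along \emph{every} family of radii. Your route needs parabolic Calder\'on--Zygmund machinery but delivers a stronger, pointwise gradient bound of independent interest, and it uses the hypothesis $a>-2$ exactly where it is natural (in $\rho^{2+a}\to 0$); the paper's route is more elementary (only integration by parts and pigeonhole) and in fact needs only $a>-N$, but concludes only along special sequences of radii, which suffices for the lemma. Your observation that the refinement from $O(|x|^{-1})$ to $o(|x|^{-1})$ is genuinely needed in the borderline case $N=2$ is accurate.

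One slip to fix in your key step: on $Q_1=B_1\times(-1,0]$ the claimed inequality $|x_0+\rho y|\geq\rho/2$ is false (take $y$ close to $-x_0/\rho$); there $g$ is unbounded and the equation itself need not hold at the point where $x_0+\rho y=0$. You must shrink the spatial ball to $B_{1/2}$ (equivalently, rescale by $\rho/2$). Since the center $(0,0)$ remains an interior point of the smaller cylinder in the parabolic sense (the top of a cylinder counts as interior for parabolic estimates), all subsequent estimates and the conclusion $\rho\,|\nabla u(x_0,t_0)|\to 0$ go through unchanged, so this is a local repair rather than a gap in the strategy.
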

\begin{proof} We follow the argument in \cite{PhS}. 
 If $a\geq 0$ then the result is immediate, so we may assume  $-2<a<0$.

Denote $d\sigma_\rho$ the surface measure on the sphere $\{x\in {\mathbb R}^N: |x|=\rho\}$. For $0<\eps<R$ such that $B_R\subset\subset\Omega$, for any $\tau>0$, we have
\begin{align}\label{2c1}
\int\limits_{\tau}^{T-\tau}\int\limits_{B_R\setminus B_{\eps}}|\nabla u|^2dxdt=&-\int\limits_{\tau}^{T-\tau}\int\limits_{B_R\setminus B_{\eps}}u\Delta u dxdt 
+\int\limits_{\tau}^{T-\tau}\int\limits_{|x|=R}uu' d\sigma_R dt-\int\limits_{\tau}^{T-\tau}\int\limits_{|x|=\eps}uu'd\sigma_\eps dt\notag\\
=&\int\limits_{\tau}^{T-\tau}\int\limits_{B_R\setminus B_{\eps}}|x|^au^{p+1}dxdt +\int\limits_{B_R\setminus B_{\eps}}(u(\tau)-u(T-\tau))dxdt \notag\\
&+ \int\limits_{\tau}^{T-\tau}\int\limits_{|x|=R}uu'd\sigma_Rdt-\int\limits_{\tau}^{T-\tau}\int\limits_{|x|=\eps}uu'd\sigma_\eps dt.
\end{align}
On the other hand, we have 
$$\int\limits_{\tau}^{T-\tau}\int\limits_{|x|=\eps}uu'\,d\sigma_\eps=\eps^{N-1}f'(\eps),
\quad\hbox{ where } f(r):=\frac{1}{2}\int\limits_{\tau}^{T-\tau}\int\limits_{S^{N-1}}u^2(r,\theta)\,d\theta.$$
Since $f\in C^1((0,R])\cap C([0,R])$ due to our regularity assumption (\ref{solclass}), we infer the existence of a sequence $\eps_i\to 0$
such that $\lim_{i\to\infty}\eps_i f'(\eps_i)=0$. Passing to the limit in (\ref{2c1}) with $\eps=\eps_i$ and noting $a>-N$, we have 
$$\int\limits_{\tau}^{T-\tau}\int\limits_{B_R}|\nabla u|^2\,dx <\infty.$$
Hence, there exist $\rho_i\to 0^{+}$ (depending on $\tau$)  such that 
\begin{align*}
 \int\limits_{\tau}^{T-\tau}\int\limits_{|x|=\rho_i} \rho_i|\nabla u|^2 \,d\sigma_{\rho_i}dt \to 0.
\end{align*}
Consequently, 
\begin{align}\label{app2}
\int\limits_{\tau}^{T-\tau}\int\limits_{|x|=\rho_i} |\nabla u| \,d\sigma_{\rho_i}\leq C\left((T-2\tau)\rho_i^{N-1}\int\limits_{|x|=\rho_i} |\nabla u|^2 \,d\sigma_{\rho_i}\right)^{1/2} \to 0.
\end{align}
Let now $\varphi\in C^\infty_0(\Omega\times (0,T))$ and denote $\Omega_\eps=\Omega\cap\{|x|>\rho\}$ for $\rho>0$ small.
From (\ref{1}), using Green's formula, we obtain
\begin{align}\label{z1}
\Bigl|\int\limits_{0}^{T}\int\limits_{\Omega_\rho} |x|^au^p\varphi\,dx+\int\limits_{0}^{T}\int\limits_{\Omega_\rho} u(\varphi_t+\Delta\varphi)\,dx\Bigr|
&=\Bigl|-\int\limits_{0}^{T}\int\limits_{\Omega_\rho} \varphi\Delta u\,dx+\int\limits_{0}^{T}\int\limits_{\Omega_\rho} u\Delta\varphi\,dx\Bigr|\notag\\
&=\Bigl|\int\limits_{0}^{T}\int\limits_{|x|=\rho} \varphi\,\frac{\partial u}{\partial r} \,d\sigma_\rho-\int\limits_{0}^{T}\int\limits_{|x|=\eps}u\frac{\partial \varphi}{\partial r} \,d\sigma_\rho\Bigr|.
\end{align}
Passing (\ref{z1}) to the limit with $\rho=\rho_i$, we conclude that $u$ is a distributional solution of (\ref{1}).
\end{proof}

\begin{lemma}\label{lemappe2}
 Assume that  $u$ is bounded solution of (\ref{1}) in ${\mathbb R }^N\times[0,T)$ in the sense of (\ref{solclass}). Assume in addition that $u$ is distributional solution. Then 
$u$ is integral solution in the sense that
\begin{align}
 u(t)=e^{t\Delta}u(0)+\int_{0}^{t}e^{(t-s)\Delta}(|.|u^p(s))ds
\end{align}
 \end{lemma}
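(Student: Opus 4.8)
The plan is to establish the integral (mild) formulation directly from the distributional one by testing against the heat kernel. Fix $t\in(0,T)$, let $G(x,s)$ denote the Gaussian heat kernel (so that $e^{s\Delta}$ has kernel $G(\cdot,s)$), and for a fixed target point $x$ consider the test function $\varphi(y,s)=G(x-y,\,t-s)$ on the slab $\mathbb R^N\times(0,t)$. This $\varphi$ solves the backward heat equation $\varphi_s+\Delta\varphi=0$ for $s<t$, so that formally inserting it into the distributional identity
\begin{align*}
-\int_{0}^{T}\int_{\mathbb R^N}u(\varphi_s+\Delta\varphi)\,dy\,ds=\int_{0}^{T}\int_{\mathbb R^N}|y|^a u^p\varphi\,dy\,ds
\end{align*}
kills the left-hand integrand except for boundary contributions at $s=0$ and $s=t$. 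The boundary term at $s\to t^-$ produces $u(x,t)$ (by the approximate-identity property of $G$ as $t-s\to0^+$), the boundary term at $s=0$ produces $(e^{t\Delta}u(0))(x)$, and the right-hand side becomes $\int_0^t (e^{(t-s)\Delta}(|\cdot|^a u^p(s)))(x)\,ds$. Collecting these yields exactly the claimed formula.

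The steps I would carry out, in order, are as follows. First I would justify that $\varphi(y,s)=G(x-y,t-s)$, suitably truncated, is an admissible test function: since $G$ is not compactly supported in space (only the distributional identity for $C_0^\infty$ test functions is assumed), I would introduce a spatial cutoff $\chi_R(y)$ and a temporal cutoff that excludes a neighborhood of $s=t$, apply the distributional identity to $\varphi_R(y,s)=\chi_R(y)\zeta_\eta(s)G(x-y,t-s)$, and then remove the cutoffs. Second I would use the boundedness of $u$ together with the Gaussian decay of $G$ and its derivatives to show the error terms created by $\nabla\chi_R$ and $\Delta\chi_R$ vanish as $R\to\infty$; boundedness of $u$ is crucial here, and it is exactly the hypothesis of the lemma. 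Third I would handle the temporal endpoint near $s=t$: replacing the sharp cutoff by $\zeta_\eta$ supported on $s<t-\eta$ and letting $\eta\to0^+$, the term $\int_{\mathbb R^N}u(y,t-\eta)G(x-y,\eta)\,dy$ converges to $u(x,t)$ by continuity of $u$ in time (here $u\in C^{0,0}$) and the fact that $G(\cdot,\eta)$ is an approximate identity. Fourth I would identify the $s=0$ boundary term as $(e^{t\Delta}u(0))(x)$ and the spatial right-hand side as the Duhamel integral, using the integrability $|y|^a u^p\in L^1_{loc}$ guaranteed by $a>-2$, $N\ge2$ (via Lemma~\ref{lemappe1}) together with boundedness of $u$.

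The main obstacle I expect is the singular weight $|y|^a$ with $a<0$: one must verify that the right-hand space-time integral $\int_0^t\int_{\mathbb R^N}|y|^a u^p(y,s)\,G(x-y,t-s)\,dy\,ds$ is absolutely convergent, including across the singularity at $y=0$ and across the diagonal $s\to t$, $y\to x$. Since $u$ is bounded, the integrand is controlled by $C|y|^a G(x-y,t-s)$, and the local integrability of $|y|^a$ for $a>-2>-N$ near the origin, combined with the uniform-in-time $L^1$ bound on $G(\cdot,t-s)$, makes the $dy$ integral finite for each $s$ and integrable in $s$ up to $s=t$; I would spell out this estimate carefully, as it simultaneously legitimizes passing the distributional identity to the Gaussian test function and guarantees the Duhamel term is well-defined. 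A secondary (but routine) technical point is the interchange of the limits $R\to\infty$ and $\eta\to0$, which I would arrange by dominated convergence using the same Gaussian bounds.
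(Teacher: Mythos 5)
Your argument is correct, but it takes a genuinely different route from the paper's. The paper never tests against the heat kernel: for $a<0$ it regularizes the weight and solves the \emph{linear} problem $\partial_t v_\varepsilon-\Delta v_\varepsilon=(|x|+\varepsilon)^a u^p$, $v_\varepsilon(0)=u(0)$; since $(|x|+\varepsilon)^a$ increases to $|x|^a$ as $\varepsilon\downarrow 0$, comparison gives $v_\varepsilon\le u$ and monotonicity in $\varepsilon$, so monotone convergence produces the mild solution $v(t)=e^{t\Delta}u(0)+\int_0^te^{(t-s)\Delta}(|\cdot|^au^p(s))\,ds$; then $z=u-v$ is a bounded, continuous distributional solution of the homogeneous heat equation with $z(\cdot,0)=0$, which is classical by parabolic regularity and vanishes identically by Watson's uniqueness theorem for bounded solutions in a strip. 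Your Gaussian-duality argument in effect reproves that uniqueness theorem rather than quoting it, so it is more self-contained; it also bypasses comparison and monotonicity altogether, hence applies verbatim to sign-changing bounded solutions, whereas the paper's approximation implicitly uses $u^p\ge 0$. What the paper's route buys is much lighter bookkeeping: no cutoff-removal estimates and no approximate-identity limits. Two points in your plan deserve care in a write-up. First, the distributional identity of Lemma~\ref{lemappe1} only admits test functions vanishing near $s=0$ as well, so you need a third (temporal) cutoff there --- your step list names only $\chi_R$ and $\zeta_\eta$; the initial-data term then arises as $\lim_{\delta\to0}\int u(y,\delta)G(x-y,t-\delta)\,dy=(e^{t\Delta}u(0))(x)$, using continuity of $u$ up to $t=0$, which the class (\ref{solclass}) on $[0,T)$ provides. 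Second, your absolute-convergence claim for the Duhamel term needs the quantitative bound $\int_{|y|<1}|y|^aG(x-y,\tau)\,dy\le C\max\bigl(|x|,\sqrt{\tau}\,\bigr)^a\le C\tau^{a/2}$ for $a<0$: when $x$ is at or near the origin the $dy$-integral is \emph{not} uniformly bounded as $s\to t$ (so the ``uniform-in-time $L^1$ bound on $G$'' is not by itself sufficient), but $\tau^{a/2}$ is integrable in $\tau$ precisely because $a>-2$, the weight itself being locally integrable since $a>-N$ for $N\ge 2$ and $a>-1$ when $N=1$. With these two points made explicit, your proof goes through.
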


\begin{proof} Lemma is standard for $a\geq 0$, so we only need to treat the case $a<0$. For any $\varepsilon>0$, let us consider following problem
\begin{align*}
 \begin{cases}
  \partial_t v_{\varepsilon}-\Delta v_{\varepsilon}=(|x|+\varepsilon)^au^p,\\
v_{\varepsilon}(0)=u(0).
 \end{cases}
\end{align*}
Then $v_{\varepsilon}\leq u$ by comparison property. Since $v_{\varepsilon}$ is increasing as $\varepsilon\to 0^{+}$, setting that $v_{\varepsilon} \to v$,  by motonone convergence,  we have
$$v(t)=e^{t\Delta}u(0)+\int_{0}^{t}e^{(t-s)\Delta}(|.|^au^p(s))ds.$$
It suffices to show that $u=v$. 

Let $z=u-v$. Then $z$ is a bounded, nonnegative distributional solution of $z_t-\Delta z=0$ in $Q_T:=R^N\times (0,T)$. By parabolic regularity (see e.g. \cite[Remark 48.3]{QS07}, we deduce that $z\in C^{2,1}(Q_T)$. Moreover, since $u,v\in C(\bar Q_T)$, it follows that $z\in C(\bar Q_T)$ with $z\equiv 0$ at $t=0$.
By standard uniqueness properties (see e.g. \cite[Theorem 2.4]{Wat89}, we conclude that $z=0$ in $Q_T$.

\end{proof}

In dimension $N = 1$, we have assumed $a>-1$ in order to make sense of distributional solutions. Actually, the definition (\ref{solclass}) is no longer consistent for $a < 0$ and $N = 1$ since $\Omega\setminus{0}$ is no longer connected and the problem should require boundary conditions at $x=0$. The following result shows that, for $N = 1$ and $a \in (-1, 0)$, there even exist solutions in the sense (\ref{solclass}) which are not distributional solutions.
\begin{proposition}\label{reappe}
 Let $N=1$ and $a\in (-1,0)$, then there exists solution $u$ of (\ref{1}) in $(-1,1)\times (0,1)$, but $u$ is not distributional solution.
\end{proposition}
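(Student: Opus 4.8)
The plan is to exploit the fact that, for $N=1$ and $a<0$, the punctured domain $(-1,1)\setminus\{0\}$ is \emph{disconnected}, so that a function in the class (\ref{solclass}) need only be continuous across $x=0$ while its two one-sided $x$-derivatives are completely free. I would therefore construct a \emph{stationary} solution $u=u(x)$ with a genuine corner at the origin: the matching value at $0$ makes it continuous (hence admissible), but the mismatch of the one-sided slopes forces a Dirac mass in the distributional second derivative, which no locally integrable right-hand side can absorb.

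Concretely, I would first solve the one-sided stationary problem
$$-u''=x^a u^p \quad\text{on }(0,1),\qquad u(0)=\beta>0,\quad u'(0^+)=\gamma>0,$$
via its integral form $u(x)=\beta+\gamma x-\int_0^x (x-s)\,s^a u(s)^p\,ds$. Because $a>-1$, the kernel $s^a$ lies in $L^1(0,1)$, so for $\beta,\gamma$ small the associated operator is a contraction on a small ball of $C([0,1])$ about the affine function $\beta+\gamma x$; this yields a solution $u^+\in C^2((0,1])\cap C^1([0,1])$ staying close to $\beta+\gamma x$, in particular positive on $[0,1]$ (note $u''$ blows up like $x^a$ at $0$, which is harmless since (\ref{solclass}) only requires $C^2$ regularity away from $x=0$). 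I would then set $u^-(x):=u^+(-x)$ on $[-1,0]$, which solves $-u''=(-x)^a u^p=|x|^a u^p$, and glue to obtain the even function $u(x):=u^+(|x|)$ on $(-1,1)$. By construction $u$ is continuous, $C^2$ away from $0$, and $u'(0^+)=\gamma$, $u'(0^-)=-\gamma$, so the jump is $[u']_0:=u'(0^+)-u'(0^-)=2\gamma\neq0$.

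Regarded as independent of $t$, this $u$ is a solution of (\ref{1}) on $(-1,1)\times(0,1)$ in the sense (\ref{solclass}): it is continuous, lies in $C^{2,1}$ away from $x=0$, and satisfies $u_t-u_{xx}=-u_{xx}=|x|^a u^p$ pointwise for $x\neq0$. To see it is \emph{not} a distributional solution, I would integrate by parts in $x$ separately on $(-1,0)$ and $(0,1)$ against $\varphi\in C_0^\infty((-1,1)\times(0,1))$; since $a>-1$ makes $u_{xx}^{\mathrm{class}}=-|x|^a u^p$ integrable, the integrations by parts are justified and the boundary terms at $x=0$ combine into
$$\langle u_{xx}^{\mathrm{distr}},\varphi\rangle=\int_0^1\!\!\int_{-1}^1 u_{xx}^{\mathrm{class}}\,\varphi\,dx\,dt+[u']_0\int_0^1\varphi(0,t)\,dt,$$
i.e. $u_{xx}^{\mathrm{distr}}=u_{xx}^{\mathrm{class}}+2\gamma\,\delta_{\{x=0\}}$. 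The classical part exactly reproduces the nonlinearity, so plugging $u$ into the identity of Lemma~\ref{lemappe1} and choosing any $\varphi$ with $\int_0^1\varphi(0,t)\,dt\neq0$ leaves the spurious term $-2\gamma\int_0^1\varphi(0,t)\,dt\neq0$, a contradiction.

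I expect the only genuine obstacle to be the solvability of the singular ODE up to the boundary point $x=0$ with a \emph{prescribed nonzero} slope, together with keeping the solution positive on all of $[0,1]$; both are settled by the contraction argument above, and this is precisely where $a>-1$ enters, guaranteeing $x^a\in L^1(0,1)$. The same hypothesis is what makes the distributional formulation meaningful at all, since it ensures $|x|^a u^p\in L^1_{\mathrm{loc}}$, which is the structural reason the statement is confined to the range $a\in(-1,0)$.
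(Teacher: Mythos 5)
Your proof is correct, and its second half (the corner at the origin producing a Dirac mass $2\gamma\,\delta_0$ in $u_{xx}$, which no locally integrable right-hand side can reproduce against test functions) is exactly the paper's concluding argument. Where you genuinely differ is in how the stationary profile is constructed. The paper does not solve the one-dimensional ODE directly: it takes a positive radial solution $v$ of the Hardy equation $-\Delta v=|x|^{p-1+a}v^p$ on the unit ball of ${\mathbb R}^3$ with $v(0)>0$, $v(1)=0$ (existence quoted from \cite{BVG10}, the hypothesis $p<p_S(p-1+a)=5+2(p-1+a)$ being automatic for $p>1$, $a>-1$), and then applies the Emden--Fowler substitution $w(r)=rv(r)$, which flattens the three-dimensional radial Laplacian into a plain second derivative and yields $-w''=r^aw^p$ on $(0,1)$ with $w(0)=0$, $w'(0^+)=v(0)>0$; the even extension $u(x,t)=w(|x|)$ is then the counterexample. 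Your route replaces this transplantation by a self-contained contraction-mapping argument for the integral equation $u(x)=\beta+\gamma x-\int_0^x(x-s)s^au(s)^p\,ds$, which works precisely because $a>-1$ makes $s^a\in L^1(0,1)$. This buys elementarity and independence from the elliptic literature, and it isolates the structural role of $a>-1$ (integrability of the kernel, hence of $|x|^au^p$); the paper's version buys brevity modulo the citation and incidentally produces a solution vanishing at $x=\pm1$, i.e. with Dirichlet boundary values, which yours does not (irrelevant for the proposition). One small point to tighten in your fixed-point step: to guarantee positivity you should take the ball radius strictly smaller than $\beta$ (e.g. radius $\beta/2$ with $\beta$, $\gamma$ small), although positivity is not strictly needed here since the nonlinearity in (\ref{1}) is $|u|^{p-1}u$.
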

\begin{proof}
 Let $B$ be unit ball in ${\mathbb R}^3$ and $v=v(r)$ be an regular positive radial solution in $B$ of the following Hardy elliptic equation
\begin{align} \label{ellip}
-\Delta v=|x|^{p-1+a}v^p,\quad  v(0)>0, \;v(1)=0.
 \end{align}
Since $p<p_S(p-1+a)=5+2(p-1+a)$, the existence of such function $v$ with homogeneous Dirichlet boundary condition was shown in \cite[Theorem 1.6 (iii)]{BVG10}. 

Let $w(r)=rv(r)$ then $w''=r^aw^p, \; r\in (0,1)$ and $w(0)=0$, $w'(0)=v(0)>0$. We set 
$$u(x,t)=w(|x|), \quad  (x,t)\in (-1,1)\times(0,1).$$
Then $u$ is solution of (\ref{1}) in the sense of (\ref{solclass})  in $(-1, 1)\times (0,1)$ with $u_t\equiv 0$. On the other hand, $u_x(0^{+},)=v(0)$, $ u_x(0^{-},t)=-v(0)$. This implies $u_{xx}(0,t)$ has a Dirac $2v(0)\delta_0$. Therefore, $u$ is no longer distributional solution.
\end{proof}

\end{appendix}



\end{document}